\numberwithin{equation}{section}
\theoremstyle{plain}
\newtheorem{Prop}{Proposition}[section]
\newtheorem{Thm}[Prop]{Theorem}
\newtheorem*{Thm*}{Theorem}
\newtheorem{Lem}[Prop]{Lemma}
\newtheorem{Cor}[Prop]{Corollary}
\theoremstyle{definition}
\newtheorem{Def}[Prop]{Definition}
\theoremstyle{remark}
\newtheorem{Rem}[Prop]{Remark}
\def\vint_#1{\mathchoice%
          {\mathop{\kern 0.2em\vrule width 0.6em height 0.69678ex
depth -0.58065ex
                  \kern -0.8em \intop}\nolimits_{\kern -0.4em#1}}%
          {\mathop{\kern 0.1em\vrule width 0.5em height 0.69678ex
depth -0.60387ex
                  \kern -0.6em \intop}\nolimits_{#1}}%
          {\mathop{\kern 0.1em\vrule width 0.5em height 0.69678ex
              depth -0.60387ex
                  \kern -0.6em \intop}\nolimits_{#1}}%
          {\mathop{\kern 0.1em\vrule width 0.5em height 0.69678ex
depth -0.60387ex
                  \kern -0.6em \intop}\nolimits_{#1}}}
\def\vintslides_#1{\mathchoice%
          {\mathop{\kern 0.1em\vrule width 0.5em height 0.697ex depth -0.581ex
                  \kern -0.6em \intop}\nolimits_{\kern -0.4em#1}}%
          {\mathop{\kern 0.1em\vrule width 0.3em height 0.697ex depth -0.604ex
                  \kern -0.4em \intop}\nolimits_{#1}}%
          {\mathop{\kern 0.1em\vrule width 0.3em height 0.697ex depth -0.604ex
                  \kern -0.4em \intop}\nolimits_{#1}}%
          {\mathop{\kern 0.1em\vrule width 0.3em height 0.697ex depth -0.604ex
                  \kern -0.4em \intop}\nolimits_{#1}}}
\newcommand{\intav}{\vint}
\newcommand{\aveint}[2]{\mathchoice
          {\mathop{\kern 0.2em\vrule width 0.6em height 0.69678ex
depth -0.58065ex
                  \kern -0.8em \intop}\nolimits_{\kern -0.45em#1}^{#2}}%
          {\mathop{\kern 0.1em\vrule width 0.5em height 0.69678ex
depth -0.60387ex
                  \kern -0.6em \intop}\nolimits_{#1}^{#2}}%
          {\mathop{\kern 0.1em\vrule width 0.5em height 0.69678ex
depth -0.60387ex
                  \kern -0.6em \intop}\nolimits_{#1}^{#2}}%
          {\mathop{\kern 0.1em\vrule width 0.5em height 0.69678ex
depth -0.60387ex
                  \kern -0.6em \intop}\nolimits_{#1}^{#2}}}
\DeclareMathOperator{\supp}{supp}
\DeclareMathOperator{\diam}{diam}
\DeclareMathOperator{\dv}{div}
\DeclareMathOperator{\osc}{osc}
\DeclareMathOperator{\sign}{sign}
\DeclareMathOperator{\dist}{dist}
\newcommand{\set}[2]{\left\{#1 : #2\right\}}
\newcommand{\mns}{\setminus}
\newcommand{\N}{\mathbb{N}}
\newcommand{\R}{\mathbb{R}}
\newcommand{\del}{\partial}
\newcommand{\eps}{\varepsilon}
\newcommand{\inv}[1]{{#1}^{-1}}
\newcommand{\dx}{\, dx}
\newcommand{\loc}{\text{\rm loc}}
\newcommand{\Om}{\Omega}
\newcommand{\om}{\omega}
\newcommand{\inp}[2]{\big\langle #1,#2\big\rangle}
\newcommand{\gr}{\nabla}
\newcommand{\hh}{\mathbb{H}}
\newcommand{\X}{\mathfrak{X} }
\newcommand{\Xu}{\X u}
\newcommand{\XX}{\X\X}
\newcommand{\dvh}{\dv_{H}}
\newcommand{\A}{\mathcal{A}}
\newcommand{\Aeps}{\mathcal{A\,_\eps}}
\newcommand{\F}{\textsc{F}}
\newcommand{\weight}{\F\left(|\X u|\right)}
\newcommand{\hw}[2]{HW^{#1,#2}}
\newcommand{\0}{0_{\,\!_\hh} }
\newcommand{\dhh}{d_{\hh^n}}
\newcommand{\normh}[1]{\|#1\|_{\hh^n}}
\newcommand{\Q}{\textsc{Q}}
\newcommand{\tu}{\tilde{u}}
\title[$C^{1,\alpha}$-Regularity of Quasilinear equations on the Heisenberg Group]
{$C^{1,\alpha}$-Regularity of Quasilinear equations on the Heisenberg Group}
\author[Shirsho Mukherjee]{Shirsho Mukherjee}
\address[S.\ Mukherjee]{Department of Mathematics, 
University of Bologna, Piazza di Porta San Donato 5,
  40126 - Bologna (BO), Italy.}
\email{shirsho.mukherjee2@unibo.it}
\thanks{2010 \textit{Mathematics Subject Classification.}  Primary 35R03, 35J62, 35J70, 35J75.  \\
\textit{Key words and Phrases.} Heisenberg Group, Quasilinear equation, $C^{1,\alpha}$ regularity.\\
The author was supported by the European Union’s Seventh Framework Programme “Metric Analysis For Emergent Technologies (MAnET)”, Marie Curie Actions-Initial Training Network, under Grant Agreement No. 607643.}
\begin{document}

\begin{abstract}
In this article, we reproduce results of classical regularity theory of  quasilinear elliptic equations in the divergence form, in the setting of 
Heisenberg Group. The considered cases encompass a very wide class of equations with isotropic growth conditions that are generalizations of the $p$-Laplacian type equation and also include equations with polynomial or exponential type growth. Some more general conditions have also been explored. 
\end{abstract}

\date{\today}
\maketitle
\setcounter{tocdepth}{1}
\phantomsection

\section{Introduction}\label{sec:Introduction}

Regularity theory for weak solutions of second order quasilinear elliptic equations in the Euclidean spaces, has been well-developed over a long 
period of time since the pioneering work of De Giorgi \cite{DeG} and has   
involved significant contributions of many authors. For more details on this topic, we refer to 
\cite{Simon,Dib,Tolk,Gia-Giu--min,Gia-Giu--div,Uhlen,Evans,Lewis}, etc. and references therein. A comprehensive study of the subject can be found in
the nowadays classical 
books by Gilbarg-Trudinger \cite{Gil-Tru}, Ladyzhenskaya-Ural'tseva \cite{Lady-Ural} and Morrey \cite{Mor}.

The goal of this paper is to obtain regularity results in the setting of Heisenberg Group $\hh^n$, that are previously known in the Euclidean setting.  
We consider the equation 
\begin{equation}\label{eq:eq}
\Q u = \dvh A(x,u,\X u)+ B(x,u,\X u) =  0
\end{equation}
in a domain $\Om\subset \hh^n$ for any $n\geq 1$, where 
$\X u =(X_1u,\ldots,X_{2n}u)$ is the horizontal gradient of a function 
$u:\Om\to \R$ and $\dvh $ is the horizontal divergence of a vector field  
(see Section \ref{sec:Preliminaries} for details). Here 
$A:\Om\times\R\times \R^{2n}\to \R^{2n}$ and 
$B:\Om\times\R\times \R^{2n}\to \R$ are given locally integrable functions.  
We also assume that $A$ is differentiable and 
the $(2n\times 2n)$ matrix $D_p \,A(x,z,p) = (\del A_i(x,z,p)/\del p_j)_{ij}$ is symmetric for every $x\in\Om, z\in\R$ and 
$p=(p_1,\ldots,p_{2n})\in\R^{2n}$. Thus, the results of  
this setting can also be applied to minimizers of a variational integral 
$$ I(u) = \int_\Om f(x,u,\X u)\dx $$
for a smooth scalar function $f:\Om\times\R\times\R^{2n}\to\R$; the 
Euler-Lagrange equation corresponding to the functional $I$, would be an equation of 
the form \eqref{eq:eq}. The equations in settings similar to ours, are often  
referred as sub-elliptic equations.

In addition to $A$ and $B$, 
we consider a $C^1$-function $g:[0,\infty)\to [0,\infty)$ 
also as given data, which satisfies $g(0)=0$ and 
there exists constants $ g_0 \geq \delta \geq 0$ such that the 
following holds, 
\begin{equation}\label{eq:g prop}
 \delta   \leq \frac{tg'(t)}{g(t)} \leq g_0 \quad\text{for all}\ t>0.
\end{equation}
The function $g$ shall be used in the hypothesis of 
growth and ellipticity conditions satisfied by $A$ and $B$, as given below. The condition 
\eqref{eq:g prop} appears in the work of Lieberman \cite{Lieb--gen}, in the 
Euclidean setting. In the case of Heisenberg Groups, a special class of quasilinear equations with growth conditions involving 
\eqref{eq:g prop}, has 
been recently studied in \cite{Muk0}. We remark that the special case 
$g(t) = t^{p-1}$ for $1<p<\infty$, would correspond to equations with $p$-laplacian type growth. For a more detailed discussion on the relevance of the condition 
\eqref{eq:g prop} and more examples of such function $g$, we refer to 
\cite{Lieb--gen,Mar2,Bar}, etc.   

The study of regularity theory for sub-elliptic equations goes back to the 
fundamental work of H\"ormander \cite{Hor}. We refer to 
\cite{Cap--reg,C-D-G,Cap-Garo,Foglein,Dom-Man--reg,Dom-Man--cordes,Marchi,Man-Min,Dom} and references therein, for earlier results on regularity 
of weak solutions of quasilinear equations. 

The structure conditions for the equation \eqref{eq:eq} used in this paper, 
have been introduced in \cite{Lieb--gen}, which are generalizations of the 
so called natural conditions for elliptic equations in divergence form; these have been 
extensively studied by Ladyzhenskaya-Ural'tseva in \cite{Lady-Ural} for equations in the Euclidean setting. 
The first structure condition is as follows. 

Given some non-negative constants $a_1, a_2, a_3,b_0,b_1$ and $\chi$, 
we assume that $A$ and $B$ satisfies  
\begin{equation}\label{eq:str1}
\begin{aligned}
 &\inp{A(x,z,p)}{p}\geq |p|g(|p|)
 -a_1\,g\bigg(\frac{|z|}{R}\bigg)\frac{|z|}{R}-g(\chi)\chi;\\
   &|A(x,z,p)| \leq a_2\, g(|p|) + a_3 \,g\bigg(\frac{|z|}{R}\bigg)
   +g(\chi) ;\\
    &|B(x,z,p)|\leq \frac{1}{R}
  \bigg[b_0\,g(|p|) + b_1\,g\bigg(\frac{|z|}{R}\bigg) + g(\chi) \bigg],
\end{aligned}
\end{equation}
where $(x,z,p)\in\Om\times\R\times\R^{2n}$ and 
$0<R<\frac{1}{2}\diam(\Om)$. Similar growth conditions have been 
considered previously in \cite{Gil-Tru},\cite{Lady-Ural} and \cite{Tru--harnack}  for the special case $g(t) = t^{\alpha-1}$ for $\alpha>1$. 

For weak solutions of equation \eqref{eq:eq} with the above structure conditions, the appropriate domain is the 
Horizontal Orlicz-Sobolev space $HW^{1,G}(\Om)$ (see Section \ref{sec:Preliminaries} for the definition), where $G(t)= \int^t_0 g(s)ds$. 
The following is the first result of this paper. 
\begin{Thm}\label{thm:c0alpha}
 Let $u\in HW^{1,G}(\Om)\cap L^\infty(\Om)$ be a  
 weak solution of the equation \eqref{eq:eq}, with $G(t)= \int^t_0 g(s)ds$ 
 and $|u|\leq M$ in $\Om$. Suppose the structure condition 
 \eqref{eq:str1} holds for some $\chi\geq 0,\,0<R\leq R_0$ and a function $g$ satisfying \eqref{eq:g prop} with 
 $\delta>0$, then there exists $c>0$ and $\alpha\in(0,1)$ dependent on 
 $n,\delta,g_0,a_1, a_2, a_3,b_0M,b_1$ such that 
 $u\in C^{\,0,\alpha}_\loc(\Om)$ and 
 \begin{equation}\label{eq:oscest}
  \osc_{B_r}u \leq c\left(\frac{r}{R}\right)^\alpha
\Big(\osc_{B_R}u + \chi R\Big),
\end{equation}
whenver $B_{R_0}\subset\subset\Om$ and $B_r,B_R$ are concentric to $B_{R_0}$ with 
$0<r<R\leq R_0$.
\end{Thm}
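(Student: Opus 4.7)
The plan is to follow the classical De Giorgi oscillation-reduction scheme, adapted to the sub-Riemannian setting of $\hh^n$ and to the Orlicz-Sobolev framework generated by $G(t)=\int_0^t g(s)\,ds$. The assumption $\delta>0$ in \eqref{eq:g prop} is crucial: it guarantees that both $G$ and its Young conjugate $G^*$ satisfy a $\Delta_2$-condition and that $g(t)t\sim G(t)$, so that Orlicz integrals are comparable to honest power integrals (say $t^{1+\delta}$ from below and $t^{1+g_0}$ from above). This interplay drives the quantitative estimates throughout.

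The first step is a one-sided Caccioppoli inequality on superlevel sets. For $k\in\R$, set $w=(u-k)^+$ and test the weak form of \eqref{eq:eq} with $\varphi=\eta^{q} w$, where $\eta\in C^\infty_0(B_R)$ is a cutoff with $\eta\equiv 1$ on $B_r\subset B_R$, $|\X\eta|\le c/(R-r)$, and $q$ is chosen large (depending on $g_0$) to absorb the $\Delta_2$-loss. The coercivity in the first line of \eqref{eq:str1}, together with the growth bounds on $A$ and $B$, Young's inequality for $G$, and the $L^\infty$-bound $|u|\le M$ (which converts the $b_0 g(|\X u|)|u|/R$ contribution into a term involving $b_0 M$), should give
\begin{equation*}
\int_{B_r\cap\{u>k\}}G(|\X u|)\,dx \,\le\, C\int_{B_R\cap\{u>k\}}G\!\left(\frac{w}{R-r}\right)dx + C\big(M+\chi R\big)^{1+g_0}\,|B_R\cap\{u>k\}|,
\end{equation*}
and a symmetric statement for $(k-u)^+$. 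This is the sub-elliptic Orlicz analogue of the standard Caccioppoli on truncations.

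The second step is a De Giorgi lemma. Using the Heisenberg Sobolev--Poincar\'e inequality on CC-balls (with homogeneous dimension $Q=2n+2$), applied after passing from $G$ to the comparable power $t^{1+\delta}$, I would iterate the Caccioppoli estimate on a geometric sequence of shrinking radii to prove: if $|\{u\le k\}\cap B_R|\ge\tfrac12|B_R|$, then
\begin{equation*}
\sup_{B_{R/2}}u \,\le\, k+\lambda\big(\osc_{B_R}u+\chi R\big)
\end{equation*}
for some $\lambda=\lambda(n,\delta,g_0,a_i,b_0 M,b_1)<1$, with the symmetric statement for $(k-u)^+$. Choosing $k=\tfrac12(\sup_{B_R}u+\inf_{B_R}u)$, one of the two density hypotheses must hold, and this yields the oscillation decay $\osc_{B_{R/2}}u\le\tau\big(\osc_{B_R}u+\chi R\big)$ for some universal $\tau<1$. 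A standard iteration lemma on the dyadic chain $R_j=R/2^j$ then converts this decay into \eqref{eq:oscest} with $\alpha=\log_2(1/\tau)$ (shrunk slightly to absorb the inhomogeneous terms $\chi R_j$).

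The main obstacle is the simultaneous handling of the Orlicz growth and the horizontal sub-Riemannian structure. Concretely, the delicate points are (i) a sharp use of Young's inequality for $G$ compatible with \eqref{eq:g prop}, so that the lower-order terms $g(|u|/R)|u|/R$ and $g(\chi)\chi$ in \eqref{eq:str1} are absorbed cleanly without losing the coercive integrand $|\X u|g(|\X u|)$; and (ii) replacing the Euclidean Sobolev embedding by the Heisenberg counterpart (including the doubling/Poincar\'e chain for CC-balls) in a form sufficient to run De Giorgi iteration in the Orlicz setting, while tracking that the constants entering $\tau$, and therefore $\alpha$, depend only on the quantities listed in the theorem.
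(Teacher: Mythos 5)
Your proposal follows the De Giorgi truncation scheme, whereas the paper derives \eqref{eq:oscest} from a full Harnack inequality proved by Moser iteration: the test functions in Theorem~\ref{thm:supharnack} take the form $\varphi=\eta^\gamma\bar u\,G(\eta\bar u/R)^{\beta-1}e^{b_0\bar u}$, with the exponential weight compensating for the natural-growth term $b_0|p|g(|p|)$ that appears once $|B|\le b_0g(|p|)/R+\cdots$ is multiplied by $\bar u$, while Theorem~\ref{thm:infharnack} uses a logarithmic test function together with the John--Nirenberg lemma on metric doubling spaces, and Corollary~\ref{cor:harnack} then gives \eqref{eq:osc u}. Your route is more direct for the oscillation estimate and avoids the exponential weight altogether, since with the test function $\eta^q(u-k)^+$ the factor $w=(u-k)^+\le 2M$ is bounded and the term $b_0\,g(|\X u|)w/R$ is absorbed by the Young-type inequality $sg(t)\le\eps\,tg(t)+\eps^{-g_0}sg(s)$; but it does not yield the Harnack inequality, which the paper records as an independently useful result. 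Two details in the sketch should be tightened. The lower-order constant in your Caccioppoli ought to be the scale-correct $G(M/R)+G(\chi)$ (equivalently $g(M/R)M/R+g(\chi)\chi$), not $(M+\chi R)^{1+g_0}$, and one must track the constants so that only the combination in \eqref{eq:oscest} survives the iteration. And the ``passage from $G$ to $t^{1+\delta}$'' is only one-sided: by \eqref{eq:gG3'} one has $G(t)\gtrsim t^{1+\delta}$ and $G(t)\lesssim t^{1+g_0}$ for $t\ge 1$, with both reversing for $t\le 1$, so the Sobolev bootstrap should be run on $G$ itself --- for instance via the $L^1$ Sobolev inequality applied to a suitable composite, as the paper does in the Moser step --- or with an honest Orlicz--Sobolev embedding, while carrying along the additive error coming from the region $\{|\X u|<1\}$.
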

The above theorem follows as a consequence of Harnack inequalities,  
Theorem \ref{thm:supharnack} and Theorem \ref{thm:infharnack} in 
Section \ref{sec:Holder}. Similar Harnack inequalities in the sub-elliptic 
setting, has also been shown in \cite{C-D-G} for the special case of polynomial 
type growth. 
The proof of these are standard imitations of the 
corresponding classical results due to Serrin \cite{Serrin}, see also 
\cite{Tru--harnack, Lieb--gen}. 

Theorem \ref{thm:c0alpha} is necessary for our second result,
the $C^{1,\alpha}$-regularity of weak solutions. This is new and 
relies on some recent development in \cite{Muk0}. The structure conditions 
considered for this, are as follows. 

Given the constants $L,L'\geq 1$ and $\alpha\in(0,1]$, we assume that 
the following holds,  
\begin{equation}\label{eq:str2}
\begin{aligned}
  &\frac{g(|p|)}{|p|}\,|\xi|^2
 \leq \,\inp{D_p\,A(x,z,p)\,\xi}{\xi}\,\leq L\,\frac{g(|p|)}{|p|}\,|\xi|^2;\\
  &|A(x,z,p)-A(y,w,p)|\,\leq
 \,L'\big(1+g(|p|)\big)\Big( |x-y|^\alpha + |z-w|^\alpha\Big);\\
  &|B(x,z,p)|\,\leq\, L'\big(1+g(|p|)\big)|p|, 
\end{aligned}
\end{equation}
for every $x,y\in\Om,\, z,w\in [-M_0,M_0]$ and $p,\xi\in\R^{2n}$, where 
$M_0>0$ is another given constant. The following theorem is the second 
result of this paper. 
\begin{Thm}\label{thm:c1alpha}
 Let $u\in HW^{1,G}(\Om)\cap L^\infty(\Om)$ be a  
 weak solution of the equation \eqref{eq:eq}, with $G(t)= \int^t_0 g(s)ds$ 
 and $\|u\|\leq M_0$ in $\Om$. Suppose the structure condition 
 \eqref{eq:str2} holds for some $L,L'\geq 1, \alpha\in (0,1]$ and a function $g$ satisfying \eqref{eq:g prop} with $\delta>0$, then there exists a constant 
 $\beta = \beta(n,\delta,g_0,\alpha, L)\in (0,1)$ such that 
 $u\in C^{1,\beta}_\loc(\Om)$ and for any open $\Om'\subset\subset\Om$, 
 we have 
 \begin{equation}\label{eq:gradest}
 |\X u|_{C^{\,0,\beta}(\Om',\R^{2n})} \leq 
 C\big(n,\delta,g_0,\alpha, L,L', M_0,g(1),\dist(\Om',\del\Om)\big). 
\end{equation}
\end{Thm}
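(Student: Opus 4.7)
The strategy is a freezing/comparison scheme that reduces Theorem \ref{thm:c1alpha} to the homogeneous regularity result of \cite{Muk} for equations with $g$-growth on $\hh^n$, and then runs a Campanato-type iteration. As a preliminary step, integrating the ellipticity bound in \eqref{eq:str2} yields $|A(x,z,p)| \leq c(L,g_0)(1+g(|p|))$, so $(A,B)$ fits into the hypothesis \eqref{eq:str1} with $\chi=1$ and constants depending only on the given data. Theorem \ref{thm:c0alpha} therefore gives $u\in C^{0,\alpha_0}_\loc(\Om)$ for some $\alpha_0=\alpha_0(n,\delta,g_0,L,L',M_0)\in(0,1)$ together with its quantitative modulus; this Hölder continuity is what feeds the freezing argument below.

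Fix $x_0\in\Om'$ and a small ball $B_r=B_r(x_0)$ with $4r<\dist(\Om',\del\Om)$. Freeze the slow variables by setting $\bar A(p):=A(x_0,u(x_0),p)$, and let $v\in HW^{1,G}(B_r)$ solve $\dvh\bar A(\X v)=0$ in $B_r$ with $v=u$ on $\del B_r$. The frozen operator $\bar A$ satisfies the $g$-ellipticity of \eqref{eq:str2}, so the main theorem of \cite{Muk} applies to $v$ and produces $\X v\in L^\infty_\loc\cap C^{0,\gamma}_\loc(B_r)$ for some $\gamma=\gamma(n,\delta,g_0,L)\in(0,1)$, with a decay estimate
\[
 \osc_{B_\rho}\X v\,\leq\, C\Big(\tfrac{\rho}{r}\Big)^{\gamma}\Big(\intav_{B_{r/2}}|\X v|^{2}\dx\Big)^{\!1/2}\qquad(\rho\leq r/4),
\]
together with a Caccioppoli bound controlling the energy of $v$ by that of $u$.

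Comparison between $u$ and $v$ is the core perturbation step. Subtracting the two weak formulations, testing with $w:=u-v$, and using the monotonicity produced by \eqref{eq:str2} yields
\[
 \int_{B_r}\frac{g(|\X u|+|\X v|)}{|\X u|+|\X v|}|\X w|^{2}\dx \,\leq\, C\!\!\int_{B_r}\!\!\big(|A(x,u,\X u)-\bar A(\X u)||\X w| + |B(x,u,\X u)||w|\big)\dx.
\]
The Hölder condition in \eqref{eq:str2} together with $|u(x)-u(x_0)|\leq C r^{\alpha_0}$ from Theorem \ref{thm:c0alpha} bounds $|A(x,u,\X u)-\bar A(\X u)|\leq C r^{\alpha\alpha_0}(1+g(|\X u|))$; the $B$-term is absorbed via Poincaré's inequality on $\hh^n$. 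After Young's inequality (with $G$ and its Orlicz conjugate) this yields, for some explicit $\tau=\tau(\alpha,\alpha_0,\delta,g_0)>0$,
\[
 \intav_{B_r}|\X u-\X v|^{2}\dx\,\leq\,C\,r^{\tau}\Big(1+\intav_{B_r}|\X u|^{2}\dx\Big).
\]

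Combining the decay of $\X v$ with the comparison bound gives, for $\rho\leq r/4$,
\[
 \intav_{B_\rho}|\X u-(\X u)_{B_\rho}|^{2}\dx \,\leq\, C\Big(\tfrac{\rho}{r}\Big)^{2\gamma}\intav_{B_r}|\X u|^{2}\dx + C\Big(\tfrac{r}{\rho}\Big)^{Q} r^{\tau}\Big(1+\intav_{B_r}|\X u|^{2}\dx\Big),
\]
with $Q=2n+2$ the homogeneous dimension of $\hh^n$. Balancing the two terms by choosing $\rho=r^{\theta}$ with $\theta$ slightly bigger than $1$, the standard Campanato iteration lemma delivers $\X u\in C^{0,\beta}_\loc(\Om)$ for some $\beta=\beta(n,\delta,g_0,\alpha,L)\in(0,1)$ and the quantitative bound \eqref{eq:gradest}. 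The only genuinely Heisenberg-specific---and principal---obstacle is the homogeneous $C^{1,\gamma}$ estimate from \cite{Muk}: its proof rests on delicate Caccioppoli estimates for the vertical derivative $T v$ that compensate for the non-commutativity $[X_i,X_{n+i}]=-4T$. Once that black box is imported, everything else is standard (but careful) perturbation bookkeeping in the Orlicz--Sobolev framework.
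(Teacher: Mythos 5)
Your overall strategy is the same one the paper follows: verify the Hölder continuity of $u$ via Theorem \ref{thm:c0alpha}, freeze the coefficients to get $\bar A(p)=A(x_0,u(x_0),p)$, compare $u$ with the solution $v$ of the Dirichlet problem for $\dvh\bar A(\X v)=0$ using the $g$-monotonicity, and close via a Campanato-type iteration. There are, however, two genuine issues.

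First, your comparison estimate is phrased in unweighted $L^2$: from the monotonicity you arrive at
\[
 \int_{B_r}\F\big(|\X u|+|\X v|\big)\,|\X u-\X v|^{2}\dx \,\leq\, C r^{\tau}\,(\cdots),
\]
where $\F(t)=g(t)/t$, and you then pass to $\intav_{B_r}|\X u-\X v|^2\dx\leq C r^{\tau}(1+\intav_{B_r}|\X u|^2\dx)$. In the degenerate regime (e.g.\ $g(t)=t^{p-1}$ with $p>2$, allowed since $g_0$ can exceed $1$) the weight $\F$ vanishes as $|\X u|+|\X v|\to 0$, so there is no uniform lower bound available and the unweighted $L^2$ comparison does not follow from the monotonicity. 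The natural quantity is the Orlicz energy $\int_{B_r}G(|\X u-\X v|)\dx$; this is exactly what the paper estimates, by splitting the integration domain into $\{|\X u-\X v|\leq 2|\X u|\}$ and its complement (using the two branches of \eqref{eq:monotone}) and working with $G$-integrals throughout (see \eqref{Gineq1}). Your $L^2$ formulation cannot be repaired by a simple appeal to boundedness of the gradient, since the problem occurs when the gradient is small, not large; either the argument must be recast in the $L^G$ Campanato framework (as in the paper, with Remark \ref{rem:camp} giving the final passage to an $L^1$-Campanato space and then $C^{0,\beta}$ via \eqref{eq:holdcamp}), or one must use a $V$-functional/excess quantity adapted to $g$.

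Second, you attribute the $C^{0,\gamma}$ regularity of $\X v$ to \cite{Muk}. That reference only establishes the Lipschitz bound (Theorem \ref{thm:Lip} here). The Hölder continuity of the horizontal gradient for the homogeneous equation is Theorem \ref{thm:holder1} of the present paper, which is itself one of the new results being proved; it cannot be imported as a black box from \cite{Muk}. This matters structurally because the Section \ref{sec:LocalH} material exists precisely to supply the decay you are invoking. I would also note that the paper's written proof uses only the $(r/R)^Q$ decay coming from the Lipschitz bound \eqref{eq:locbound} together with the Giaquinta--Giusti bootstrap, whereas you use the Hölder decay of $\X v$ directly; your route is in fact the more classical one and makes the final Campanato step above the critical exponent $Q$ transparent, so this difference in organization is a reasonable deviation rather than an error, provided the citation is corrected and the estimates are carried out in the $G$-framework.
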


Pertaining to the growth conditions involving \eqref{eq:g prop}, 
local Lipshcitz continuity for the class of 
equations of the form $\dvh\A(\X u)=0$, 
has been shown in \cite{Muk0}. As a follow up, here we show 
the $C^{1,\alpha}$-regularity for this case as well, with a robust gradient 
estimate unlike \eqref{eq:gradest}. 
\begin{Thm}\label{thm:holder1}
Let $u\in HW^{1,G}(\Om)$ be a weak solution of the equation 
$\dvh\A(\X u)=0$, where $\A:\R^{2n}\to\R^{2n}$, the matrix $D\A$ is symmetric and the following structure condition holds, 
\begin{equation}\label{eq:str3}
 \begin{aligned}
 \frac{g(|p|)}{|p|}\,|\xi|^2 
 \leq \,&\inp{D\A(p)\,\xi}{\xi}\leq L\,\frac{g(|p|)}{|p|}\,|\xi|^2;\\
 &|\A(p)|\leq L\,g(|p|). 
\end{aligned} 
\end{equation}
for every $p,\xi\in\R^{2n}$, $L\geq1$ is a given constant and 
$g$ satisfies \eqref{eq:g prop} with $\delta>0$. Then 
$\X u$ is locally H\"older continuous and there exists 
$\sigma=\sigma(n,g_0,L)\in (0,1)$ and $c=c(n,\delta,g_0,L)>0$ such that 
for any $B_{r_0}\subset\Om$ and $0<r<r_0/2$, we have 
\begin{equation}\label{eq:holder1}
\max_{1\leq l\leq 2n}\intav_{B_r}G(|X_l u - \{X_l u\}_{B_r}|)\dx \,\leq\, c\Big(\frac{r}{r_0}\Big)^\sigma
\intav_{B_{r_0}} G(|\X u|)\dx .
\end{equation} 
\end{Thm}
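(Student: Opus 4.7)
The plan is to adapt the iteration scheme that Zhong \cite{Zhong} developed for the $p$-Laplacian on the Heisenberg group to the Orlicz growth condition of \eqref{eq:str3}, using as its starting point the local Lipschitz estimate for $\X u$ established in \cite{Muk}. Thus I may assume $\X u \in L^\infty_\loc(\Om,\R^{2n})$, so that on any ball where $\X u$ does not vanish the equation $\dvh \A(\X u)=0$ is a uniformly subelliptic linear equation with ellipticity constants of order $g(|\X u|)/|\X u|$.

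First I would differentiate the equation along each horizontal direction and derive a Caccioppoli inequality for $v = X_l u$ and for the truncations $(v-k)_\pm$. Because the horizontal fields do not commute --- on $\hh^n$ one has $[X_i, X_{n+i}] = -4T$ --- this differentiation produces commutator terms involving the vertical derivative $Tu$. Testing with $\eta^2 (v-k)_\pm$ and exploiting \eqref{eq:str3} together with \eqref{eq:g prop} should yield, after routine algebra, an estimate of the form
\[
 \int_{B_{r/2}} \eta^2 \,\frac{g(|\X u|)}{|\X u|}\, |\X v|^2 \dx \,\leq\, \frac{C}{r^2}\int_{B_r} G(|\X u|)\dx \,+\, C\!\int_{B_r}\! |Tu|\, g(|\X u|)\,\eta^2 \dx .
\]

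The decisive difficulty --- and the technical heart of the argument --- is to control the vertical derivative $Tu$ in the right-hand side above. For this I would reproduce the reverse-H\"older and higher-integrability bootstrap of \cite{Zhong,Muk} in the Orlicz framework. Testing the equation with $\eta^{2s}\,T\phi$-type functions and using commutator identities should give a Caccioppoli estimate of the form
\[
 \int_{B_{r/2}} \eta^{2s}\,\frac{g(|\X u|)}{|\X u|}\,|Tu|^2 \dx \,\leq\, \frac{C(s)}{r^2}\int_{B_r} G(|\X u|)\dx,
\]
valid for a range of $s$ depending only on $n,\delta,g_0,L$. Iterating this through the Sobolev embedding for $HW^{1,G}$ and the doubling properties of $G$ implied by \eqref{eq:g prop} closes the bootstrap precisely because the Orlicz doubling constants depend only on $g_0$ and $\delta$.

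With $Tu$ under control, the last step is a dichotomy argument on each ball $B_r$. For an appropriate threshold $N = N(n,\delta,g_0,L)$ I would distinguish the \emph{nondegenerate} regime $\max_{B_r}|\X u| \leq N \{|\X u|\}_{B_r}$ from its opposite. In the nondegenerate regime the equation behaves like a uniformly elliptic linear equation and the Moser-Harnack inequality --- in spirit the same as Theorem \ref{thm:supharnack} and Theorem \ref{thm:infharnack} applied to $\pm(X_l u - k)$ --- yields oscillation decay for $X_l u$. In the degenerate regime the quantity $G(|\X u|)$ is already small on a definite fraction of $B_r$, and a De~Giorgi type iteration driven by the Caccioppoli inequality forces geometric decay of the excess $\intav_{B_r} G(|X_l u - \{X_l u\}_{B_r}|)\dx$. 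Combining both alternatives gives \eqref{eq:holder1} with $\sigma$ determined by the fixed geometric ratio in the iteration. The main hurdle throughout is to replace every power-law estimate in \cite{Zhong} by its Orlicz counterpart so that all constants depend only on $n,\delta,g_0,L$ through \eqref{eq:g prop}.
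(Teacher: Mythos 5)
Your proposal gets the broad outline right --- start from the Lipschitz bound of \cite{Muk}, differentiate the equation, control $Tu$ via weighted Caccioppoli/bootstrap estimates, and conclude by an alternative --- but it is missing the single most important device of the paper's proof, and the alternative you set up does not close.

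The paper's argument hinges on the Tolksdorff--Lieberman \emph{double truncation}
$v=\min(\mu(r)/8,\max(\mu(r)/4-X_lu,0))$, which is absent from your sketch. The point of $v$ is that $\X v$ is supported on the set $E=\{\mu(r)/8<X_lu<\mu(r)/4\}$, and on $E$ one has $|\X u|\geq \mu(r)/8$, so the degenerate weight $\F(|\X u|)=g(|\X u|)/|\X u|$ is comparable (up to $8^{g_0}(2n)^{g_0/2}$) to $\F(\mu(r))$. Your Caccioppoli inequality for the untruncated $v=X_lu$ with test functions $\eta^2(v-k)_\pm$ retains the weight $\F(|\X u|)$, which may vanish where $|\X u|$ does, and you give no mechanism to replace it by a constant. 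The paper's Lemmas \ref{lem:start}, \ref{lem:tech} and \ref{lem:main} are precisely the machinery needed to push the truncation through the commutator terms and land on an inequality with a frozen weight $\F(\mu(r))$.

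Your alternative is also not the paper's, and as written the ``nondegenerate'' case fails. The condition $\max_{B_r}|\X u|\leq N\{|\X u|\}_{B_r}$ bounds the sup by the average but gives no lower bound on $|\X u|$: $\X u$ can still vanish inside $B_r$, so the frozen equation is not uniformly elliptic and the Moser--Harnack step cannot be carried out for $X_lu-k$ without further structure. The paper's dichotomy (Corollary \ref{prop:case1} and Lemma \ref{lem:hold}) is instead on level-set measures of the individual components: Case 1 is that for some $l$ either $|\{X_lu<\mu(4r)/4\}|$ or $|\{X_lu>-\mu(4r)/4\}|$ is at most $\theta|B_{4r}|$, which via the truncation lemma forces $|\X u|\geq c\mu(2r)$ everywhere on $B_{2r}$ --- this is the genuine nondegeneracy that makes the De Giorgi/Moser step available. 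Case 2 is that for every $l$ both sets have proportional measure; one then runs the De Giorgi estimate only on superlevel sets $\{X_lu>k\}$ with $k\geq\mu(8r)/4$, where the lower bound $|\X u|\geq\mu(8r)/4$ is automatic, and uses the measure bounds to absorb the extra $\mu(8r)$ term via $\omega(8r)\geq 3\mu(8r)/4$. Your ``degenerate regime'' reasoning (``$G(|\X u|)$ is already small on a definite fraction'') does not give a geometric decay of the excess by itself. (Also minor: in the paper's convention $[X_i,X_{n+i}]=T$, not $-4T$.) To repair your proposal you should import the truncation $v$, the comparability \eqref{comparable1} on $E$, and Zhong's measure-theoretic alternative; the rest of your outline (Lipschitz start, bootstrap for $Tu$, final iteration) is then consistent with the paper.
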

The proof of the above theorem, follows similarly along the line of that in \cite{Muk-Zhong}. 
It involves Caccioppoli type estimates 
of the horizontal and vertical vector fields along with the use of an integrability 
estimate of \cite{Zhong} and   
a double truncation of \cite{Tolk} and \cite{Lieb--bound}.

We remark that the spaces $C^{\,0,\alpha}$ and $C^{1,\alpha}$ considered in 
this paper, are in the sense of Folland-Stein \cite{Folland-Stein--book}. In other words, the spaces are defined with respect to the 
homogeneous metric of the Heisenberg Group, see Section \ref{sec:Preliminaries} for details. No assertions are made concerning 
the regularity of the vertical derivative.

This paper is organised as follows. In Section \ref{sec:Preliminaries}, we 
provide a brief review on Heisenberg Group and Orlicz spaces. Then in 
Section \ref{sec:Holder}, first we prove a global maximum principle exploring 
some generalised growth conditions along the lines of \cite{Lieb--gen}; then 
we prove the Harnack inequalities, thereby leading to the proof of Theorem 
\ref{thm:c0alpha}. The whole of Section \ref{sec:LocalH} is devoted to the 
proof of Theorem \ref{thm:holder1}. Finally in Section \ref{sec:c1alpha}, the proof of Theorem \ref{thm:c1alpha} is provided and some possible 
extensions of the structure conditions are discussed.

\section{Preliminaries}\label{sec:Preliminaries}
In this section, we fix the notations used and provide a brief introduction of the  
Heisenberg Group $\hh^n$. Also, we provide some essential facts on 
Orlicz spaces and the Horizontal Sobolev spaces, which are required for 
the purpose of this setting.  
\subsection{Heisenberg Group}\label{subsec:Heisenberg Group}\noindent
\\
Here we provide the definition and properties of Heisenberg group  
that would be useful in this paper.  
For more details, we 
refer the reader to \cite{Bonfig-Lanco-Ugu},\cite{C-D-S-T}, etc. 
\begin{Def}\label{def:H group}
 For $n\geq 1$, the \textit{Heisenberg Group} denoted by $\hh^n$, is identified to  the Euclidean space 
$\R^{2n+1}$ with the group operation 
\begin{equation}\label{eq:group op}
 x\cdot y\, := \Big(x_1+y_1,\ \dots,\ x_{2n}+y_{2n},\ t+s+\frac{1}{2}
\sum_{i=1}^n (x_iy_{n+i}-x_{n+i}y_i)\Big)
\end{equation}
for every $x=(x_1,\ldots,x_{2n},t),\, y=(y_1,\ldots,y_{2n},s)\in {\mathbb H}^n$.
\end{Def}
Thus, $\hh^n$ with the group operation \eqref{eq:group op} forms a 
non-Abelian Lie group, whose left invariant vector fields corresponding to the
canonical basis of the Lie algebra, are
\[ X_i=  \partial_{x_i}-\frac{x_{n+i}}{2}\partial_t, \quad
X_{n+i}=  \partial_{x_{n+i}}+\frac{x_i}{2}\partial_t,\] 
for every $1\leq i\leq n$ and the only
non zero commutator $ T= \partial_t$. 
We have 
\begin{equation}\label{eq:comm}
  [X_i\,,X_{n+i}]=  T\quad 
  \text{and}\quad [X_i\,,X_{j}] = 0\ \ \forall\ j\neq n+i.
\end{equation}
We call $X_1,\ldots, X_{2n}$ as \textit{horizontal
vector fields} and $T$ as the \textit{vertical vector field}. 
For a scalar function $ f: \hh^n \to \R$, we denote
$\X f  = (X_1f,\ldots, X_{2n}f)$ and $
\XX f =  (X_i(X_j f))_{i,j} $
as the \textit{Horizontal gradient} and \textit{Horizontal Hessian}, 
respectively. 
From \eqref{eq:comm}, we have the following
trivial but nevertheless, an important inequality 
$
|Tf|\leq 2|\XX f|$. 
For a vector valued function 
$F = (f_1,\ldots,f_{2n}) : \hh^n\to \R^{2n}$, the 
\textit{Horizontal divergence} is defined as 
$$ \dvh (F)  =  \sum_{i=1}^{2n} X_i f_i .$$
The Euclidean gradient of a 
function $g: \R^{k} \to \R$, shall be denoted by
$\gr g=(D_1g,\ldots,D_{k} g)$ and the Hessian matrix by $D^2g$.

The \textit{Carnot-Carath\`eodory metric} (CC-metric) is defined 
as the length 
of the shortest 
horizontal curves, connecting two points.  
This is equivalent to the \textit{Kor\`anyi metric}, denoted as  
$ \dhh(x,y)= \normh{y^{-1}\cdot x}$,  
where the Kor\`anyi norm for $x=(x_1,\ldots,x_{2n}, t)\in \hh^n$ is 
\begin{equation}\label{eq:norm}
 \normh{x} :=  \Big(\ \sum_{i=1}^{2n} x_i^2+ |t|\ \Big)^\frac{1}{2}.
\end{equation}
Throughout this article we use CC-metric balls denoted by $B_r(x) = \set{y\in\hh^n}{d(x,y)<r}$ for $r>0$ and $ x \in \hh^n $. However, by virtue of the equivalence 
of the metrics, all assertions for CC-balls can be restated to Kor\`anyi balls. 

The Haar measure of $\hh^n$ is just the Lebesgue 
measure of $\R^{2n+1}$. For a measurable set $E\subset \hh^n$, we denote 
the Lebesgue measure as $|E|$. For an integrable function $f$, we denote 
$$ \{f\}_E = \intav_E f\dx = \frac{1}{|E|} \int_E f\dx .$$
The Hausdorff dimension with respect to the metric $d$ is also the homogeneous dimension of the group $\mathbb H^n$, which shall be denoted as 
$Q=2n+2$, 
throughout this paper. Thus, for any CC-metric ball $B_r$, we have that 
$|B_r| = c(n)r^Q$. 

For $ 1\leq p <\infty$, the \textit{Horizontal Sobolev space} $HW^{1,p}(\Omega)$ consists
of functions $u\in L^p(\Omega)$ such that the distributional horizontal gradient $\X u$ is in $L^p(\Omega\,,\R^{2n})$.
$HW^{1,p}(\Omega)$ is a Banach space with respect to the norm
\begin{equation}\label{eq:sob norm}
  \| u\|_{HW^{1,p}(\Omega)}= \ \| u\|_{L^p(\Omega)}+\| \X u\|_{L^p(\Omega,\R^{2n})}.
\end{equation}
We define $HW^{1,p}_{\loc}(\Omega)$ as its local variant and 
$HW^{1,p}_0(\Omega)$ as the closure of $C^\infty_0(\Omega)$ in 
$HW^{1,p}(\Omega)$ with respect to the norm in \eqref{eq:sob norm}. 
The Sobolev Embedding theorem has the following version in the setting of 
Heisenberg group (see \cite{C-D-G},\cite{C-D-S-T}).
\begin{Thm}[Sobolev Embedding]\label{thm:sob emb}
Let $B_r\subset {\mathbb H}^n$ and $1<q<Q$. 
For all $u \in HW^{1,q}_0(B_r)$, there exists constant $c=c(n,q)>0$ such that
\begin{equation}\label{eq:sob emb}
\left(\intav_{B_r}| u|^{\frac{Q q}{Q-q}}\, dx\right)^{\frac{Q-q}{Q q}}
\leq\, c\,r \left(\intav_{B_r}| \X u|^q\, dx\right)^{\frac 1 q}.
\end{equation}
\end{Thm}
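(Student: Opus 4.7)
Since $HW^{1,q}_0(B_r)$ is by definition the closure of $C_c^\infty(B_r)$ in the Horizontal Sobolev norm, by density it suffices to prove \eqref{eq:sob emb} for $u\in C_c^\infty(B_r)$. I would extend any such $u$ by zero outside $B_r$ so that $u\in C_c^\infty(\hh^n)$, and aim first for the global inequality $\|u\|_{L^{q^*}(\hh^n)}\leq C\|\X u\|_{L^q(\hh^n)}$ with $q^*=Qq/(Q-q)$.

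The core idea is a Riesz-potential representation. Let $\Gamma$ denote Folland's fundamental solution of the sub-Laplacian $\sum_{i=1}^{2n}X_i^2$ on $\hh^n$; it is $Q$-homogeneous of degree $2-Q$ with respect to the Heisenberg dilations and, up to a universal constant, is a multiple of $\|\cdot\|_\hh^{\,2-Q}$. A direct computation using the explicit formula (or the homogeneity alone) gives the pointwise bound $|\X\Gamma(z)|\leq C\,\normh{z}^{1-Q}$, which by equivalence of the Kor\`anyi and CC metrics yields $|\X\Gamma(z)|\leq C\,d(0,z)^{1-Q}$. Writing $u=\Gamma\ast \laph u$ and integrating by parts in the convolution then gives
\[
u(x)=-\int_{\hh^n}\X_y\Gamma(y^{-1}\cdot x)\cdot \X u(y)\,dy,
\]
from which
\[
|u(x)|\,\leq\, C\int_{\hh^n}\frac{|\X u(y)|}{d(x,y)^{Q-1}}\,dy\,=\,C\,I_1(|\X u|)(x),
\]
where $I_1$ is the Riesz potential of order $1$ adapted to the homogeneous dimension $Q$.

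Next I would apply the Hardy--Littlewood--Sobolev inequality on the doubling metric measure space $(\hh^n,d,dx)$, giving $\|I_1 f\|_{L^{q^*}(\hh^n)}\leq C(n,q)\|f\|_{L^q(\hh^n)}$ for every $1<q<Q$. Taking $f=|\X u|$ produces the global estimate. To pass to the averaged form, I would use $|B_r|=c(n)r^Q$ and the identity $\tfrac{1}{q}-\tfrac{1}{q^*}=\tfrac{1}{Q}$: dividing both sides by appropriate powers of $|B_r|$ converts the $L^p$ norms into averages, and the surplus factor $|B_r|^{1/Q}$ contributes precisely the $r$ in \eqref{eq:sob emb}.

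The main obstacle is the kernel estimate $|\X\Gamma(z)|\leq C\,d(0,z)^{1-Q}$ for the horizontal gradient of the fundamental solution, which is where the specific geometry of $\hh^n$ enters; it rests on Folland's construction and the $2-Q$ homogeneity of $\Gamma$ under the anisotropic Heisenberg dilations. Once this bound is in hand, the passage to the Sobolev inequality is a routine application of the Riesz-potential machinery for spaces of homogeneous type, and the rescaling to produce the stated averaged inequality is purely algebraic.
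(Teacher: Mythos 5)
The paper states this embedding without proof, citing \cite{C-D-G} and \cite{C-D-S-T}, where precisely the route you describe---Folland's fundamental solution, the pointwise kernel bound $|\X\Gamma(z)|\lesssim \normh{z}^{1-Q}$, the representation $|u|\lesssim I_1(|\X u|)$, and the Hardy--Littlewood--Sobolev inequality on a space of homogeneous dimension $Q$---is carried out. Your argument is correct, including the final normalization step: since $u$ is supported in $B_r$, $1/q-1/q^\ast=1/Q$, and $|B_r|=c(n)r^Q$, dividing the global $L^{q^\ast}$--$L^q$ inequality by the appropriate powers of $|B_r|$ produces exactly the averaged form with the factor $r$.
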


H\"older spaces with respect to homogeneous metrics have appeared in Folland-Stein \cite{Folland-Stein--book} and therefore, are sometimes called 
are known as Folland-Stein classes and denoted by $\Gamma^{\alpha}$ or 
$\Gamma^{\,0,\alpha}$ in some literature. However, here we maintain the classical notation and define 
\begin{equation}\label{def:holderspace}
C^{\,0,\alpha}(\Om) =
\set{u\in L^\infty(\Om)}{|u(x)-u(y)|\leq c\,d(x,y)^\alpha\ \forall\ x,y\in \Om}
\end{equation} 
for $0<\alpha \leq 1$, 
which are Banach spaces with the norm 
\begin{equation}\label{eq:holder norm}
\|u\|_{C^{\,0,\alpha}(\Om)}
=  \|u\|_{L^\infty(\Om)}+ \sup_{x,y\in\Om} 
\frac{|u(x)-u(y)|}{d(x,y)^\alpha}.
\end{equation}
These have standard extensions to classes $C^{k,\alpha}(\Om)$ 
for $k\in \N$, which consists of functions having horizontal 
derivatives up to order $k$ in $C^{\,0,\alpha}(\Om)$. The local 
counterparts are denoted as $C^{k,\alpha}_\loc(\Om)$. Now, 
the definition of Morrey and Campanato spaces in sub-elliptic setting 
differs in different texts. Here, we adopt the definition similar to 
the classical one.  

For any domain $\Om\subset \hh^n$ and $\lambda>0$, we define the {\it Morrey space} as 
\begin{equation}\label{def:morrey}
\mathcal M^{1,\lambda}(\Om) = 
\set{u\in L^1_\loc(\Om)}{\int_{B_r} |u|\dx <c\,r^\lambda\ \forall\ B_r\subset\Om, r>0}
\end{equation}
and the  {\it Campanato space} as 
\begin{equation}\label{def:camp}
\mathcal L^{1,\lambda}(\Om) = 
\set{u\in L^1_\loc(\Om)}{\int_{B_r} \big|u-\{u\}_{B_r}\big|\dx <c\,r^{\lambda}\ \forall\ B_r\subset\Om,r>0},
\end{equation}
where in both definitions $B_r$ represents balls with metric $d$. These spaces are Banach spaces and have properties similar to the classical spaces in the Euclidean setting. We shall use the fact that for every $0<\alpha<1$ and 
$Q=2n+2$, we have  
\begin{equation}\label{eq:holdcamp}
\mathcal L^{1,Q+\alpha}(\Om) \subset C^{\,0,\alpha}(\Om),
\end{equation}
where the inclusion is to be understood as taking continuous representatives. 
For details on classical Morrey and Campanato spaces, we refer to 
\cite{Kuf-O-F} and for the sub-elliptic setting we refer to 
\cite{C-D-S-T}. 

\subsection{Orlicz-Sobolev Spaces}\label{subsec:Orlicz-Sobolev Spaces}\noindent
\\
In this subsection, 
we recall some basic facts on Orlicz-Sobolev functions, which shall be necessary later. Further details can be found in textbooks e.g. \cite{Kuf-O-F},\cite{Rao-Ren}. 
\begin{Def}[Young function]\label{def:young}
 If $ \psi :[0,\infty) \to [0,\infty) $ is an non-decreasing, left continuous function with 
$\psi(0) = 0 $ and $ \psi(s)>0 $ for all $ s >0 $, then any function 
$\Psi : [0,\infty) \to [0,\infty] $ of the form 
\begin{equation}\label{eq:young}
  \Psi(t) = \int^t_0 \psi(s)\, ds  
\end{equation}
is called a \textit{Young function}. 
A continuous Young function $\Psi : [0,\infty) \to [0,\infty) $ 
satisfying  
$\Psi(t) = 0$ iff $t = 0,\ \lim_{t\to \infty}\Psi(t)/t  =  \infty$ and $ 
\lim_{t \to 0}\Psi(t)/t=  0 $, is called 
\textit{N-function}. 
\end{Def}
There are several different definitions available in various references.  
However, within a slightly restricted range of functions (as in our case), 
all of them are equivalent. We refer to the book of Rao-Ren \cite{Rao-Ren}, 
for a more general discussion.
\begin{Def}[Conjugate]\label{def:conj}
The \textit{generalised inverse} of a montone function $\psi$ is defined as 
$ \psi^{-1}(t)  :=  \inf\{s \geq 0\ |\ \psi(s) >t \} $.  
Given any Young function $\Psi(t)  =  \int^t_0 \psi(s) ds $, 
its 
\textit{conjugate} function $ \Psi^* : [0,\infty) \to [0,\infty]$ is defined as
\begin{equation}\label{eq:young comp}
 \Psi^* (s)  :=  \int^s_0 \psi^{-1}(t)\, dt 
\end{equation}
and $(\Psi,\Psi^*) $ is called a \textit{complementary pair}, which 
is \textit{normalised} if 
$ \Psi(1) +\Psi^*(1) = 1$. 
\end{Def}
A Young function $\Psi$ is convex, increasing, left continuous and  
satisfies $ \Psi(0) = 0 $ and $ \lim_{t\to\infty}\Psi(t) = \infty $. 
The generalised inverse of $\Psi$ is right continuous, increasing and coincides with the usual inverse 
when $\Psi$ is continuous and strictly increasing.
In general, the  
inequality 
\begin{equation}\label{eq:inv ineq}
\Psi(\Psi^{-1}(t))\leq t \leq \Psi^{-1}(\Psi(t))
\end{equation}
is satisfied for all $ t \geq 0 $ and equality holds when 
$\Psi(t)$ and $\Psi^{-1}(t) \in (0,\infty)$. 
 It is also evident that that the conjugate function $\Psi^*$ is also a Young 
 function, $\Psi^{**} = \Psi$
and for any constant $c>0$, we have $ (c\,\Psi)^*(t) = c\,\Psi^*(t/c)$. 

Here are two standard examples of 
complementary pair of Young functions.  
\begin{enumerate}
 \item $ \Psi(t) = t^p/p$ and $ \Psi^*(t) = t^{p^*}/p^*$ when 
 $ 1< p, p^*< \infty$ and $ 1/p +1/p^* = 1 $.
 \item $\Psi(t) = (1+t)\log(1+t)-t$ and $ \Psi^*(t) = e^t-t-1 $.
 \end{enumerate}
The following Young's inequality is well known. We refer to \cite{Rao-Ren} for a proof. 
\begin{Thm}[Young's Inequality]\label{thm:YI}
 Given a Young function $\Psi(t) =  \int^t_0 \psi(s) ds $, we have 
 \begin{equation}\label{eq:YI}
 st \,\leq\,   \Psi(s)  + \Psi^*(t)
\end{equation}
for all $s,t >0$ and equality holds if and only if $ t = \psi(s) $ or $ s = \inv{\psi}(t)$.
\end{Thm}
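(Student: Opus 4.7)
The plan is to establish Young's inequality by the standard geometric/area interpretation, viewing both $\Psi(s)$ and $\Psi^*(t)$ as areas of regions in the plane determined by the graph of $\psi$, and then comparing their sum with the area of the rectangle $[0,s]\times[0,t]$.

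First I would introduce the two planar regions
\[
A = \{(u,v):\, 0\leq u\leq s,\ 0\leq v\leq \psi(u)\},\qquad
B = \{(u,v):\, 0\leq v\leq t,\ 0\leq u\leq \psi^{-1}(v)\},
\]
and observe that Fubini's theorem together with the definitions \eqref{eq:young} and \eqref{eq:young comp} give $|A|=\Psi(s)$ and $|B|=\Psi^*(t)$. The core of the argument is the containment $R:=[0,s]\times[0,t]\subset A\cup B$: for any point $(u_0,v_0)\in R$, either $v_0\leq \psi(u_0)$, placing it in $A$, or $v_0>\psi(u_0)$, in which case the definition of the generalised inverse together with left-continuity and monotonicity of $\psi$ force $\psi^{-1}(v_0)\geq u_0$, placing the point in $B$. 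This containment yields $st=|R|\leq |A|+|B|=\Psi(s)+\Psi^*(t)$, proving \eqref{eq:YI}.

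For the equality characterisation, I would track where slack can enter the chain of inequalities. The gap comes from two sources: the difference $|A\cup B|-|R|$ (mass of $A\cup B$ outside the rectangle) and $|A\cap B|$ (double-counted mass). When the point $(s,t)$ satisfies $t=\psi(s)$, or equivalently $s=\psi^{-1}(t)$, both $A$ and $B$ lie within $R$ and their intersection reduces to (a subset of) the graph of $\psi$, which has zero Lebesgue measure, so both slacks vanish and equality holds. Conversely, if $t\neq \psi(s)$ and $s\neq\psi^{-1}(t)$ then either $A$ sticks out horizontally or $B$ sticks out vertically by a positive-measure strip, forcing strict inequality.

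The main obstacle is handling the generalised inverse carefully: $\psi$ may have flat pieces (on which $\psi^{-1}$ jumps) or jumps (on which $\psi^{-1}$ is flat), and one must verify the set-theoretic containment and the measure-zero claims in those degenerate situations. This is where left-continuity of $\psi$ and right-continuity of $\psi^{-1}$ enter crucially. Once those are checked, the rest is just Fubini and elementary measure theory; I would not grind through the continuous/strictly-increasing special case separately, since the geometric argument handles the general Young function uniformly.
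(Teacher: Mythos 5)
The paper does not give a proof of this statement at all; it simply says ``We refer to \cite{Rao-Ren} for a proof.'' So there is no in-paper argument to compare against. Your geometric/area argument is the standard proof (and is essentially what appears in Rao-Ren), and it is correct: $|A|=\Psi(s)$ and $|B|=\Psi^*(t)$ by Fubini, the containment $[0,s]\times[0,t]\subset A\cup B$ follows exactly as you say from monotonicity (if $v_0>\psi(u_0)$ then $\psi(w)\leq\psi(u_0)<v_0$ for all $w\leq u_0$, hence $\psi^{-1}(v_0)\geq u_0$), and the inequality follows. The only imprecision is your phrase ``$t=\psi(s)$, or equivalently $s=\psi^{-1}(t)$'': with the generalised inverse these are not equivalent (if $\psi$ is flat through $s$, one can have $t=\psi(s)$ while $\psi^{-1}(t)>s$; if $\psi$ jumps at $s$, one can have $s=\psi^{-1}(t)$ while $t>\psi(s)$). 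The theorem correctly states equality holds if \emph{either} condition holds, and your area argument does give equality in both cases, but you should treat them as two separate (symmetric) cases rather than as a single equivalent condition. This does not affect the inequality itself, which is the part actually used in the paper.
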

A Young function $\Psi$ is called \textit{doubling} if there exists 
a constant $C_2>0 $ such that for all $t \geq 0$, we have 
$\Psi(2t)\leq C_2\, \Psi(t)$. By virtue of \eqref{eq:g prop}, the structure function $g$ is doubling with the  doubling constant $C_2 = 2^{g_0}$ and hence, we restrict to Orlicz spaces of doubling functions. 
\begin{Def}\label{def:Orl space}
Let $ \Omega\subset\R^m $ be Borel and $\nu$ be a $\sigma$-finite measure on $\Om$. For a doubling Young function $\Psi$, the 
\textit{Orlicz space} $L^\Psi(\Omega,\nu)$ is defined as the vector space generated by the set
$ \{u: \Omega \to \R\ |\ u\ \text{measurable},\ \int_\Omega 
\Psi(|u|)\,d\nu < \infty\} $. The space is equipped with the following 
\textit{Luxemburg norm} 
\begin{equation}\label{eq:lux norm}
 \|u\|_{L^\Psi(\Omega,\nu)}  :=  \inf\Big\{ k >0 : \int_\Omega 
\Psi\left(\frac{|u|}{k}\right) \,d\nu \leq 1\Big\} 
\end{equation}
If $\nu $ is the Lebesgue measure, the space 
is denoted by $L^\Psi(\Omega)$ and any $u \in L^\Psi(\Omega)$ is called 
a $\Psi$-integrable function.
\end{Def}
The function 
$u \mapsto \|u\|_{L^\Psi(\Omega,\nu)}$ is lower semi continuous and $L^\Psi(\Omega,\nu)$ is a
Banach space with the norm in \eqref{eq:lux norm}. 
The following theorem is a generalised version of H\"older's inequality, which 
follows easily from   
the Young's inequality \eqref{eq:YI}, see \cite{Rao-Ren} or \cite{Tuo}. 
\begin{Thm}[H\"older's Inequality]\label{thm:gen holder}
 For every $ u \in L^\Psi(\Omega,\nu)$ and $ v \in L^{\Psi^*}(\Omega,\nu)$, we have 
 \begin{equation}\label{eq:gen holder}
 \int_\Om |uv|\,d\nu\leq 2\, \|u\|_{L^\Psi(\Omega,\nu)}\|v\|_{L^{\Psi^*}(\Omega,\nu)}
\end{equation}
\end{Thm}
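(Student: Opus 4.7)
The plan is to reduce the inequality to a pointwise application of Young's inequality (Theorem \ref{thm:YI}) and then integrate, using the definition of the Luxemburg norm to control each side.

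First I would dispose of the trivial cases: if either $\|u\|_{L^\Psi(\Om,\nu)}=0$ or $\|v\|_{L^{\Psi^*}(\Om,\nu)}=0$, then $u=0$ or $v=0$ almost everywhere, so both sides of \eqref{eq:gen holder} vanish; and if either norm is infinite, there is nothing to prove. So assume $a := \|u\|_{L^\Psi(\Om,\nu)}\in(0,\infty)$ and $b := \|v\|_{L^{\Psi^*}(\Om,\nu)}\in(0,\infty)$.

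Next I would apply Young's inequality \eqref{eq:YI} pointwise with $s = |u(x)|/a$ and $t = |v(x)|/b$, obtaining
\begin{equation*}
\frac{|u(x)\,v(x)|}{a\,b} \;\leq\; \Psi\!\left(\frac{|u(x)|}{a}\right) + \Psi^*\!\left(\frac{|v(x)|}{b}\right)
\end{equation*}
for $\nu$-a.e.\ $x\in\Om$. Integrating over $\Om$ against $\nu$ gives
\begin{equation*}
\frac{1}{a\,b}\int_\Om |uv|\,d\nu \;\leq\; \int_\Om \Psi\!\left(\frac{|u|}{a}\right)d\nu + \int_\Om \Psi^*\!\left(\frac{|v|}{b}\right)d\nu.
\end{equation*}

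To finish, I need each of the two integrals on the right to be $\leq 1$. Here I would invoke the definition \eqref{eq:lux norm} of the Luxemburg norm together with monotone convergence: for any $k>a$ one has $\int_\Om \Psi(|u|/k)\,d\nu\leq 1$, and letting $k\downarrow a$ through a sequence, monotone convergence (and left continuity of $\Psi$, which holds by Definition \ref{def:young}) yields $\int_\Om \Psi(|u|/a)\,d\nu\leq 1$, and similarly for $v$. Combining these bounds gives
\begin{equation*}
\frac{1}{a\,b}\int_\Om |uv|\,d\nu \;\leq\; 2,
\end{equation*}
which is exactly \eqref{eq:gen holder}. The only subtle point is justifying the passage to the infimum in the Luxemburg norm, which is why the constant $2$ appears rather than $1$; sharper bounds with the Orlicz norm would replace it by $1$, but these are not needed here. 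This is a routine verification and I do not anticipate any substantive obstacle.
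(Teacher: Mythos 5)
Your proof is correct, and it follows exactly the route the paper indicates (the paper itself does not spell out an argument but says \eqref{eq:gen holder} ``follows easily from the Young's inequality'' with a pointer to the references). The reduction to a pointwise Young inequality with $s=|u|/a$, $t=|v|/b$ followed by integration, together with the monotone-convergence/left-continuity argument showing $\int_\Om\Psi(|u|/a)\,d\nu\le 1$, is the standard and complete verification; no gaps.
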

\begin{Rem}
The factor $2$ on the right hand side of the above, can be dropped if $(\Psi, \Psi^*)$ is normalised and one is replaced by 
$\Psi(1)$ in the definition \eqref{eq:lux norm} of Luxemburg norm. 
\end{Rem}
The \textit{Orlicz-Sobolev 
space} $W^{1,\Psi}(\Om)$ 
can be defined similarly by $ L^\Psi$ norms of the function and 
its gradient, see \cite{Rao-Ren}, that resembles $W^{1,p}(\Om)$. 
But here for $\Om\subset \hh^n$, we require the notion of  
\textit{Horizontal Orlicz-Sobolev spaces}, analoguous to the horizontal Sobolev spaces defined in the previous subsection.
\begin{Def}\label{def:HOS space}
 We define the space 
 $HW^{1,\Psi}(\Omega) = \{u\in L^\Psi(\Omega)\ |\ \Xu\in L^\Psi(\Omega,\R^{2n})\}$ for an open set $\Om\subset \hh^n$ and a  
 doubling Young function $\Psi$, along with the norm 
 $$ \|u\|_{HW^{1,\Psi}(\Omega)} :=  
\|u\|_{L^\Psi(\Omega)}+ \|\Xu\|_{L^\Psi(\Omega, \R^{2n})}.$$
The spaces $HW^{1,\Psi}_\loc(\Om),\ HW^{1,\Psi}_0(\Om)$ are  defined, 
similarly as earlier.
\end{Def}
We remark that, all these notions can be defined for a general metric space, equipped with a doubling measure. We refer to \cite{Tuo} for the details.

The following theorem, so called $(\Psi,\Psi)$-Poincar\'{e} inequality, has been 
proved (see Proposition 6.23 in \cite{Tuo}) in the setting of a general metric space with a doubling 
measure and metric upper gradient. We provide the statement in the setting 
of Heisenberg Group.
\begin{Thm}\label{thm:psipsi}
Given any doubling N-function $\Psi$ with doubling constant $c_2>0$, every 
$u\in HW^{1,\Psi}(\Om)$ satisfies the following inequality for every $B_r\subset\Om$ and some $c= c(n,c_2)>0$,
\begin{equation}\label{eq:psipsi}
\intav_{B_r} \Psi\bigg( \frac{|u-\{u\}_{B_r}|}{r}\bigg)\dx 
\leq c \intav_{B_r}\Psi(|\X u|)\dx.
\end{equation}
\end{Thm}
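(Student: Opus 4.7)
The plan is to deduce the $(\Psi,\Psi)$-Poincar\'{e} inequality from the classical $(1,1)$-Poincar\'{e} inequality in the Heisenberg group (due to Jerison), by combining a pointwise chain-of-balls estimate with Jensen's inequality applied to the doubling $N$-function $\Psi$. The only structural facts on $\Psi$ that will enter are its convexity and the doubling property $\Psi(2t)\leq c_2\,\Psi(t)$, which is why the final constant depends only on $n$ and $c_2$.

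First I would fix $B_r\subset\Om$ and, for almost every $x\in B_r$, consider the dyadic chain $B_k(x) := B(x,2^{-k}r)$ for $k\geq 0$. Telescoping via the Lebesgue differentiation theorem and then applying the $(1,1)$-Poincar\'{e} inequality on each $B_k(x)$, together with the doubling of Lebesgue measure on $\hh^n$, yields the pointwise Riesz-type bound
\begin{equation*}
 \frac{1}{r}\big|u(x) - \{u\}_{B(x,r)}\big| \,\leq\, c\sum_{k=0}^\infty 2^{-k}\intav_{B_k(x)}|\X u|\,dy;
\end{equation*}
a further application of $(1,1)$-Poincar\'{e} on $B_r$ replaces $\{u\}_{B(x,r)}$ by $\{u\}_{B_r}$ up to an additive term that is absorbed into the right-hand side. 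Applying $\Psi$ to this pointwise inequality, using its convexity with the probability weights $w_k = 2^{-k}/\sum_{j\geq 0} 2^{-j}$, and then Jensen's inequality inside each average, I obtain
\begin{equation*}
 \Psi\bigg(\frac{|u(x)-\{u\}_{B_r}|}{r}\bigg) \,\leq\, c\sum_{k=0}^\infty 2^{-k}\intav_{B_k(x)}\Psi(|\X u|)\,dy.
\end{equation*}
Integrating in $x\in B_r$ and swapping sum and integral via Fubini, the geometric factor $2^{-k}$ compensates the overlap among the balls in the chain and delivers the stated bound, possibly on a slightly dilated ball on the right-hand side.

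The main obstacle I anticipate is keeping the dilation constant on the right equal to $1$, as the statement has $B_r$ on both sides. A naive chain argument naturally produces $\intav_{B_r}\Psi(|u-\{u\}_{B_r}|/r)\dx \leq c\intav_{B_{\lambda r}}\Psi(|\X u|)\,dy$ for some $\lambda>1$, and collapsing $\lambda$ to $1$ requires either the sharp non-enlarged $\hh^n$ form of the weak $(1,1)$-Poincar\'{e} inequality, or the Haj{\l}asz--Koskela self-improvement machinery, which is valid here since $\hh^n$ is a doubling metric measure space supporting a $(1,1)$-Poincar\'{e} inequality. All remaining steps are standard and rely only on the convexity and doubling of $\Psi$.
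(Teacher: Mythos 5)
The paper does not actually prove Theorem~\ref{thm:psipsi}: it simply cites Proposition~6.23 of Tuominen \cite{Tuo}, which establishes the $(\Psi,\Psi)$-Poincar\'e inequality in the general setting of a doubling metric measure space supporting a metric upper gradient structure, and then specializes the statement to $\hh^n$. Your proposal, by contrast, is a self-contained proof. It is essentially correct and follows the standard route that such results take: a pointwise Riesz-potential/telescoping estimate built from the $(1,1)$-Poincar\'e inequality and measure doubling, followed by convexity of $\Psi$ (with geometric weights so that the weights sum to $1$) plus Jensen's inequality inside each ball average, and finally Tonelli/Fubini with the observation that the overlap of the chain balls $B(x,2^{-k}r)$ over $x\in B_r$ contributes only a factor bounded by $|B_{2r}|/|B_r|\le 2^Q$, compensated by the geometric decay $2^{-k}$. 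You correctly flag the one genuine delicacy, namely that a naive centered chain produces the averaged gradient over a dilated ball $B_{\lambda r}$, and you correctly identify the two standard remedies: either keep the chain inside $B_r$ using the geodesic structure of $(\hh^n,d_{CC})$ (a Whitney chain from the center to $x$ rather than a chain centered at $x$), or invoke the Haj\l asz--Koskela self-improvement from weak to strong Poincar\'e inequalities on geodesic doubling spaces, which is exactly the machinery the paper alludes to around \eqref{eq:psidiam}. What the paper's citation buys is brevity and generality (Tuominen's result holds for arbitrary $p$-Poincar\'e spaces with upper gradients); what your argument buys is transparency about where each structural hypothesis on $\Psi$ -- convexity, doubling, $N$-function normalization -- enters, and in particular why the constant depends only on $n$ and $c_2$. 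Both are legitimate.
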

In case of $\Psi(t)= t^p$, the inequality is referred as $(p,p)$-Poincar\'{e} inequality. 
The following corrollary follows easily from \eqref{eq:psipsi} and the $(1,1)$-Poincar\'{e} inequality on $\hh^n$. 
\begin{Cor}\label{cor:psisob}
Given a convex doubling N-function $\Psi$ with doubling constant $c_2>0$, there exists 
$c=c(n,c_2)$ such that for every $B_r\subset\Om$ and 
$u\in HW^{1,\Psi}(\Om)\cap HW^{1,1}_0(\Om)$, we have 
\begin{equation}\label{eq:psisob}
\intav_{B_r} \Psi\bigg( \frac{|u|}{r}\bigg)\dx 
\leq c \intav_{B_r}\Psi(|\X u|)\dx.
\end{equation}
\end{Cor}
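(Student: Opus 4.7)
The plan is to deduce \eqref{eq:psisob} from the mean-value version in Theorem \ref{thm:psipsi} by using the zero-trace hypothesis $u \in HW^{1,1}_0(\Om)$ to control the average $\{u\}_{B_r}$. Applying Theorem \ref{thm:psipsi} directly yields
\[
\intav_{B_r}\Psi\bigg(\frac{|u-\{u\}_{B_r}|}{r}\bigg)\dx \;\leq\; c\intav_{B_r}\Psi(|\X u|)\dx .
\]
From the triangle inequality $|u|\leq |u-\{u\}_{B_r}| + |\{u\}_{B_r}|$ together with convexity (used with weights $\tfrac12,\tfrac12$) and the doubling property of $\Psi$, I obtain the pointwise estimate
\[
\Psi\bigg(\frac{|u|}{r}\bigg) \;\leq\; c\,\Psi\bigg(\frac{|u-\{u\}_{B_r}|}{r}\bigg) \;+\; c\,\Psi\bigg(\frac{|\{u\}_{B_r}|}{r}\bigg),
\]
with $c=c(c_2)$. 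Averaging this over $B_r$ and using the previous display on the first term, the proof reduces to establishing the mean bound $\Psi(|\{u\}_{B_r}|/r)\leq c\intav_{B_r}\Psi(|\X u|)\dx$.

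For this last step I would use the hypothesis $u\in HW^{1,1}_0(\Om)$. Approximating $u$ by test functions in $C^\infty_0(\Om)$ and applying Theorem \ref{thm:sob emb} with $q=1$ (combined with H\"older's inequality) on $B_r$ yields the $(1,1)$-Poincar\'e--Sobolev inequality
\[
\intav_{B_r}|u|\dx \;\leq\; c\,r\intav_{B_r}|\X u|\dx ,
\]
so that $|\{u\}_{B_r}|/r \leq c\intav_{B_r}|\X u|\dx$. Jensen's inequality applied to the convex function $\Psi$, followed by a use of doubling to absorb the multiplicative constant $c$, then produces
\[
\Psi\bigg(\frac{|\{u\}_{B_r}|}{r}\bigg)\;\leq\;\Psi\bigg(c\intav_{B_r}|\X u|\dx\bigg)\;\leq\; c'\intav_{B_r}\Psi(|\X u|)\dx ,
\]
and combining the three displays delivers \eqref{eq:psisob}.

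The main subtle point, and essentially the only nontrivial step, is invoking the $(1,1)$-Poincar\'e--Sobolev embedding on the sub-ball $B_r\subset \Om$ from the global zero-trace hypothesis $u\in HW^{1,1}_0(\Om)$; this is handled in the standard way through approximation by $C^\infty_0(\Om)$ functions and zero-extension. Once that is in place, the remainder of the argument is a routine combination of Theorem \ref{thm:psipsi}, Jensen's inequality, and the convexity and doubling of $\Psi$.
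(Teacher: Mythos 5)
Your decomposition $u = (u-\{u\}_{B_r}) + \{u\}_{B_r}$, control of the mean-zero part by Theorem~\ref{thm:psipsi}, and the Jensen-plus-doubling bound on $\Psi(|\{u\}_{B_r}|/r)$ is exactly the route the paper suggests: its entire proof is the single sentence that the corollary ``follows easily from \eqref{eq:psipsi} and the $(1,1)$-Poincar\'e inequality on $\hh^n$.'' The convexity/doubling split and the Jensen step are carried out correctly.

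The step you label the ``main subtle point'' and dismiss as standard is, however, where a genuine gap sits. The hypothesis $u\in HW^{1,1}_0(\Om)$ gives vanishing trace on $\del\Om$, \emph{not} on $\del B_r$ for a strict sub-ball $B_r\subsetneq\Om$: restricting an approximating sequence $\varphi_k\in C^\infty_0(\Om)$ to $B_r$ does not produce functions in $C^\infty_0(B_r)$, and zero-extension outside $\Om$ says nothing about behaviour on $\del B_r\subset\Om$, so Theorem~\ref{thm:sob emb} is not applicable to $u|_{B_r}$. Indeed, as literally stated the inequality is false for a general $B_r\subset\Om$: take $\Om=B_2$, $B_r=B_1$, and $u=\min\{1,\,2-d(x,0)\}$; then $u\in HW^{1,\Psi}(\Om)\cap HW^{1,1}_0(\Om)$, but $u\equiv 1$ on $B_1$ so $\X u=0$ there, making the right-hand side of \eqref{eq:psisob} vanish while the left-hand side equals $\Psi(1)>0$. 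The $(1,1)$-Poincar\'e step you invoke requires zero trace on $\del B_r$ itself, i.e.\ either $\supp u\subset B_r$ or (after zero-extension) $\Om\subset B_r$. The latter reading is in fact what is used downstream in the paper to derive \eqref{eq:psidiam}, taking a ball $B_r\supset\Om$ of radius comparable to $\diam(\Om)$. With the hypothesis read that way, the rest of your argument is sound.
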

Given a domain $\Om\subset\hh^n$, using \eqref{eq:psisob} and arguments 
with chaining method (see \cite{Haj-Kos}), it is also possible to show that for 
$u,\Psi$ and $c=c(n,c_2)>0$ as in Corrollary \ref{cor:psisob}, we have
\begin{equation}\label{eq:psidiam}
\intav_{\Om} \Psi\bigg( \frac{|u|}{\diam(\Om)}\bigg)\dx 
\leq c \intav_{\Om}\Psi(|\X u|)\dx.
\end{equation}

Now we enlist some important properties of the  
function $g$ that satisfies \eqref{eq:g prop}. 
\begin{Lem}\label{lem:gandG prop} Let $ g\in C^1([0,\infty)) $ be a function that satisfies \eqref{eq:g prop} for some constant $g_0>0$ and 
$g(0) = 0$. If $ G(t) = \int^t_0 g(s)ds $, then the following holds. 
\begin{align}
\label{eq:gG1}&(1) \  \ G \in C^2([0,\infty))\ \text{is convex}\,;\\
\label{eq:gG2}&(2)\ \ tg(t)/(1+g_0)\leq G(t)\leq tg(t)\ \ \forall\ \ t\geq 0;\\
\label{eq:gG3}&(3)\  \ g(s)\leq g(t)\leq (t/s)^{g_0}g(s) \ \ \forall\ \ 0\leq s<t;\\
\label{eq:gG4}&(4) \ \ G(t)/t \ \text{is an increasing function}\ \ \forall\ \ t>0;\\
\label{eq:gG5}&(5) \ \ tg(s)\leq tg(t) + sg(s)\ \ \forall\ \ t,s \geq 0.
\end{align}
\end{Lem}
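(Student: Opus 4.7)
The five statements are all elementary consequences of the two-sided bound $\delta \leq tg'(t)/g(t) \leq g_0$ together with the fundamental theorem of calculus. My plan is to handle them in the order listed; none of the steps requires anything beyond integration by parts and a one-line ODE argument on $\log g$. I expect the only step carrying genuine content to be the lower bound in (2); all others reduce immediately to monotonicity or a single differentiation, and (4) will fall out for free from (2).

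For (1), the conditions $g\in C^1$ and $G'=g$ give $G\in C^2$ immediately. To get convexity I would rewrite the lower bound in \eqref{eq:g prop} as $g'(t)\geq \delta\, g(t)/t$ and observe that $g(t)>0$ for all $t>0$ (the ratio $tg'/g$ being finite in the hypothesis forces $g(t)\neq 0$ for $t>0$, and $g(0)=0$ with values in $[0,\infty)$ then gives $g\geq 0$ throughout). Hence $G''=g'\geq 0$. For (2), the upper bound $G(t)\leq tg(t)$ is immediate from the monotonicity of $g$. For the lower bound I would integrate by parts,
\begin{equation*}
G(t)=\int_0^t g(s)\,ds = tg(t)-\int_0^t sg'(s)\,ds,
\end{equation*}
and then use the upper bound $sg'(s)\leq g_0\, g(s)$ from \eqref{eq:g prop} to estimate the remaining integral by $g_0 G(t)$. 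Rearranging gives $(1+g_0)G(t)\geq tg(t)$, which is the content of (2).

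For (3), the lower bound $g(s)\leq g(t)$ is again monotonicity, and the upper bound I would get by integrating the differential inequality $g'(\tau)/g(\tau)\leq g_0/\tau$ from $s$ to $t$, obtaining $\log(g(t)/g(s))\leq g_0\log(t/s)$ and hence $g(t)\leq (t/s)^{g_0}g(s)$. For (4), a direct differentiation of $G(t)/t$ yields $(tg(t)-G(t))/t^2$, which is non-negative by the upper bound in (2). Finally for (5), I would split into two cases: if $s\leq t$ then monotonicity gives $tg(s)\leq tg(t)\leq tg(t)+sg(s)$ since $sg(s)\geq 0$; if $s>t$ then $tg(s)\leq sg(s)\leq tg(t)+sg(s)$ since $tg(t)\geq 0$.

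The only real obstacle, and it is mild, is spotting that the lower bound in (2) wants integration by parts rather than a direct estimate; once that is in place, the remaining items are essentially forced by (2) and the monotonicity of $g$.
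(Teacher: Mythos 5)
Your proof is correct, and since the paper simply cites Lieberman \cite{Lieb--gen} (Lemma 1.1) and omits the argument, your reconstruction is the natural one: monotonicity of $g$ from the lower bound of \eqref{eq:g prop}, integration by parts for the lower bound in \eqref{eq:gG2}, integration of the logarithmic derivative for \eqref{eq:gG3}, differentiation for \eqref{eq:gG4}, and the two-case split for \eqref{eq:gG5} all match the standard route. The only microscopic point you could tighten is that the upper bound in \eqref{eq:gG3} as stated at $s=0$ is vacuous (infinite right-hand side), so the integration of $g'/g \leq g_0/\tau$ should be carried out for $s>0$, with $s=0$ handled trivially.
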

The proof is trivial (see Lemma 1.1 of \cite{Lieb--gen}), so we omit it.
Notice that \eqref{eq:gG3} implies that $g$ is increasing and 
doubling, with $g(2t) \leq 2^{g_0} g(t)$ and 
\begin{equation}\label{eq:gdoub}
 \min\{1,\alpha^{g_0}\}g(t)\leq g(\alpha t)\leq 
 \max\{1,\alpha^{g_0}\}g(t)\quad\text{for all}\ \alpha, t\geq 0. 
\end{equation}
Since $G$ is convex, an easy application of Jensen's inequality yields 
\begin{equation}\label{eq:jenap}
\intav_\Om G(|w-\{w\}_\Om|)\dx \,\leq\, c(g_0)\min_{k\in\R}\intav_\Om G(|w-k|)\dx\quad \forall\, w\in L^G(\Om)  
\end{equation}
All the above properties hold even if $\delta=0$ in \eqref{eq:g prop} and 
they are purposefully kept that way. However, the properties corresponding to 
$\delta>0$, shall be required in some situations. For this case, 
\eqref{eq:gG2} and \eqref{eq:gG3} becomes 
\begin{align}
\label{eq:gG2'}& tg(t)/(1+g_0)\leq G(t)\leq tg(t)/(1+\delta)
\ \ \forall\ \ t\geq 0;\\
\label{eq:gG3'}& (t/s)^\delta g(s)\leq g(t)\leq (t/s)^{g_0}g(s) \ \ \forall\ \ 0\leq s<t,
\end{align}
and hence $t\mapsto g(t)/t^{g_0}$ is decreasing and 
$t\mapsto g(t)/t^\delta$ is increasing. 

\section{H\"older continuity of weak solutions}\label{sec:Holder}
In this section, we show that weak solutions of quasilinear equations in the 
Heisenberg Group satisfy the Harnack inequalities, which leads to the 
H\"older continuity, thereby proving Theorem \ref{thm:c0alpha}. The techniques are standard, based on 
appropriate modifications of similar results in the Euclidean setting, by 
Trudinger \cite{Tru--harnack} and Lieberman \cite{Lieb--gen}.
 
On a domain $\Om\subset \hh^n$, we consider the prototype 
quasilinear operator in divergence form
\begin{equation}\label{eq:qop}
\Q u = \dvh A(x,u,\X u)+ B(x,u,\X u) 
\end{equation}
throughout this paper, where $A:\Om\times\R\times \R^{2n}\to \R^{2n}$ and 
$B:\Om\times\R\times \R^{2n}\to \R$ are given functions. 
Appropriate additional hypothesis on structure conditions satisfied by $A$ 
and $B$, shall be assumed in the following subsections, accordingly as required. 

Here onwards, throughout this paper, we fix the notations 
\begin{equation}\label{eq:GandF}
\F(t)  :=  g(t)/t \quad \text{and}\quad G(t)  :=  \int^t_0 g(s)\,ds. 
\end{equation} 

We remark that the conditions chosen for $A$, always ensure some 
sort of ellipticity for the operator \eqref{eq:qop} and the existence of 
weak solutions $u\in HW^{1,G}(\Om)$ for $\Q u=0$ is always assured. Any pathological situation, where this does not hold, is avoided. 
\subsection{Global Maximum principle}\noindent
\\
Given weak solution $u\in HW^{1,G}(\Om)$ for $\Q u=0$, here we show global 
$L^\infty$ estimates of $u$ under appropriate boundary conditions. The method and techniques are adaptations of similar classical results in \cite{Lieb--gen} for quasilinear equations in the Euclidean setting.

Here, we 
assume that $u$ satisfies the boundary condition $u-u_0\in HW^{1,G}_0(\Om)$ for some $u_0\in L^\infty(\bar\Om)$. 
In addition, we assume that there exists $b_0>0$ and $M\geq \|u_0\|_{L^\infty}$ 
such that  
\begin{align}
 \label{f1}\inp{A(x,z,p)}{p}&\geq |p|g(|p|)-f_1(|z|);\\
   \label{f2}zB(x,z,p) &\leq b_0\inp{A(x,z,p)}{p} + f_2(|z|),
\end{align}
holds for all $x\in\Om,\ |z|\geq M$ and $p\in\R^{2n}$, where $f_1,f_2$ and $g$ 
are non-negative increasing functions. 
Also, 
we require 
$\inp{A(x,u,\Xu)}{\X u} \in L^1(\Om)$ and $u\in L^\infty(\Om)$. The first 
condition \eqref{f1}, can be viewed as a weak ellipticity condition.

Additional conditions on $f_1$ and $f_2$, yields apriori integral estimates 
as in the following lemma. Similar results in Euclidean setting, can be found in  \cite{Gil-Tru} and \cite{Lady-Ural}. 
\begin{Lem}\label{lem:Gbound1}
Let $ u \in HW^{1,G}(\Om) $ be a weak solution of $\Q u=0$ in $\Om$ along with 
the conditions \eqref{f1} and \eqref{f2} and $u-u_0\in HW^{1,G}_0(\Om)$. If 
the functions $f_1,f_2$ and $g$ satisfy
\begin{align}
\label{Gcond1} (1)&\ tg(t) \leq a_1G(t);\\
\label{Gcond2} (2)&\ tg(t) f_1(Rt)+G(t)f_2(Rt) \leq a_1G(t)^2,
\end{align}
for some $a_1\geq 1,R>0$ and every $t>M/R$, then there exists $c(n)>0$ such that 
for $Q=2n+2$ and 
$c= c(n)[(1+a_1)(1+2b_0)]^Q$, we have 
\begin{equation}\label{eq:Gbound1}
\sup_\Om \,G(|u|/R) \leq \max\Big\{\frac{c}{R^Q}\int_\Om G(|u|/R)\dx\,,\, 
(1+a_1)G(M/R)\Big\}.
\end{equation}
\end{Lem}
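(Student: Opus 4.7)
The strategy is a De Giorgi iteration on the super-level sets of $w:=G(|u|/R)$, modelled on the Orlicz-Sobolev treatment in \cite{Lieb--gen} (see also \cite{Lady-Ural}). Since $M\geq\|u_0\|_{L^\infty}$, any truncation of $|u|$ at a level $k\geq M$ vanishes on $\partial\Om$ in the sense of $HW^{1,G}_0(\Om)$, making test functions supported on $A(k):=\{|u|>k\}$ admissible.

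\smallskip

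\noindent\emph{Step 1 (Caccioppoli-type estimate).} I would test the weak form $\int\langle A,\X\varphi\rangle\dx=\int B\varphi\dx$ against
$$\varphi = \sign(u)\,\bigl(G(|u|/R)-G(k/R)\bigr)_+^{1+2b_0}.$$
The exponent $1+2b_0$ is chosen so that, after plugging in, the $b_0$-term produced by \eqref{f2} (applied as $\sign(u)B\leq b_0\langle A,\X u\rangle/|u|+f_2(|u|)/|u|$ on $A(k)$) is absorbed by the coefficient $(1+2b_0)$ on the left-hand side via the pointwise bound $\varphi\sign(u)/|u|\leq (G_u-G_k)_+^{2b_0}\,g(|u|/R)/R$, which follows from $G(t)\leq tg(t)$ in \eqref{eq:gG2}; this leaves a net positive coefficient $(1+b_0)$. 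Combining with the ellipticity \eqref{f1} and using \eqref{Gcond1}-\eqref{Gcond2} to dominate the forcings $f_1,f_2$ by $G(|u|/R)^2$, together with the Young inequality in the Orlicz pairing $(G,G^*)$, yields a Caccioppoli-type estimate of the schematic form
$$\int_\Om\bigl|\X\bigl[(G_u-G_k)_+^{1+b_0}\bigr]\bigr|\dx\,\leq\,\frac{C(1+a_1)(1+b_0)}{R}\int_{A(k)}(G_u-G_k)_+^{1+b_0}\,\dx,$$
valid for $k\geq(1+a_1)G(M/R)$, with shorthand $G_u:=G(|u|/R),\ G_k:=G(k/R)$.

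\smallskip

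\noindent\emph{Step 2 (Sobolev upgrade and iteration).} I would apply the $HW^{1,1}_0$-Sobolev embedding (Theorem \ref{thm:sob emb} with $q=1$) on a ball containing $\Om$ to upgrade the above $L^1$ gradient estimate to an $L^{Q/(Q-1)}$ bound on $(G_u-G_k)_+^{1+b_0}$. Combined with Chebyshev's inequality $(G(h/R)-G(k/R))^{1+b_0}|A(h)|\leq\int_{A(k)}(G_u-G_k)_+^{1+b_0}\dx$ for $h>k$ and Hölder's inequality on the right, this yields a non-linear recursion for $\psi(k):=\int_{A(k)}(G_u-G_k)_+^{1+b_0}\dx$ of the form
$$\psi(h)\,\leq\,\frac{C\bigl[(1+a_1)(1+2b_0)\bigr]^{Q}\,R^{Q}}{(G(h/R)-G(k/R))^{\gamma}}\,\psi(k)^{1+1/Q}\qquad(h>k\geq k_0),$$
for some $\gamma>0$. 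The standard De~Giorgi iteration lemma then gives $\psi(k)=0$ for all $k$ above a critical threshold, and the choice
$$k_0:=\max\Bigl\{cR^{-Q}\int_\Om G(|u|/R)\dx,\;(1+a_1)G(M/R)\Bigr\},\quad c=c(n)\bigl[(1+a_1)(1+2b_0)\bigr]^{Q},$$
matches this threshold and yields exactly the bound \eqref{eq:Gbound1}.

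\smallskip

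\noindent\emph{Main obstacle.} The principal technical point is the coordinated choice of the exponent $1+2b_0$ in Step~1: it must be large enough for the $b_0$-term from \eqref{f2} to be absorbed by the ellipticity via $G(t)/t\leq g(t)$, yet arranged so that the remaining $f_1,f_2$ contributions are dominated by $G_u^2$ through \eqref{Gcond1}-\eqref{Gcond2}. The precise power $Q$ in the final constant $[(1+a_1)(1+2b_0)]^Q$ then reflects the $Q/(Q-1)$ Sobolev exponent together with the geometric iteration in Step~2.
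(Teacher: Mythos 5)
Your proposal is a De Giorgi-type level-set iteration, whereas the paper's proof is a Moser iteration; these are genuinely different routes to the same $L^\infty$ bound. The paper tests $\Q u=0$ with $\varphi=h(u)$, where $h(u)= u\,G^\beta\bigl|(1-\tau/G)^+\bigr|^{Q\beta+1}$, $G=G(|u|/R)$, $\tau=G(M/R)$, and $\beta\ge 2b_0$; after a reverse H\"older step via Sobolev embedding it iterates $\beta_m=\kappa^m\gamma_0$, $\gamma_0=2b_0+1$, $\kappa=Q/(Q-1)$, and sends $m\to\infty$ to obtain $\sup_\Om G\bigl|(1-\tau/G)^+\bigr|^Q\le c(n)\bigl(a_1(2b_0+1)/R\bigr)^Q\int_\Om G\dx$, from which the stated bound follows by the dichotomy on whether $\sup_\Om G$ exceeds $(1+a_1)\tau$. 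Your approach replaces the iterated-power truncation $|(1-\tau/G)^+|^{Q\beta+1}$ with the hard level-set truncation $(G_u-G_k)_+$ and uses the De Giorgi lemma instead of sending $p\to\infty$; both absorb the $b_0$-term from \eqref{f2} by the same trick $G(t)/t\le g(t)$, and both need \eqref{eq:gG5} together with \eqref{Gcond1}--\eqref{Gcond2} to tame the $g(|u|/R)|\X u|$ cross-term and the forcings $f_1,f_2$. The Moser weight makes the dichotomy fall out automatically at the last step; in the De Giorgi version you have to build it in through the choice of starting level $k_0$, as you indicate. The approaches carry the same structural input and are expected to yield the same constant.

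One concrete detail to fix: your Caccioppoli estimate in Step 1 is written for $\X\bigl[(G_u-G_k)_+^{1+b_0}\bigr]$, but testing with $\varphi=\sign(u)\,(G_u-G_k)_+^{1+2b_0}$ produces, after absorbing the $b_0$-contribution and invoking \eqref{eq:gG5}, a bound on $\int_{A(k)}(G_u-G_k)_+^{2b_0}\,g(|u|/R)\,|\X u|\,\dx/R$, which is the $L^1$ gradient of $(G_u-G_k)_+^{1+2b_0}$, not of $(G_u-G_k)_+^{1+b_0}$. The Sobolev and Chebyshev steps should therefore be carried out with the exponent $1+2b_0$ throughout; otherwise the powers in the recursion do not close. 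This does not break the strategy but does change the intermediate bookkeeping.
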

\begin{proof}
The proof is similar to that of Lemma 2.1 in \cite{Lieb--gen} (see also Lemma 10.8 in \cite{Gil-Tru}) and follows from standard Moser's iteration. We provide a brief outline. 

Note that, we can assume $|u|\geq M$ without loss of generality, as otherwise we are done; we provide the proof for $u\geq M$, the proof for $u\leq -M$ is similar. 
The test function $\varphi = h(u)$ is used for the equation $\Q u=0$, where 
letting $G=G(|u|/R)$ and $\tau=G(M/R)$, we choose 
$$h(u)= u G^\beta \big|\big(1-\tau/G\big)^+\big|^{Q\beta+1},$$ for $\beta\geq 2b_0$ and 
$Q=2n+2$. Thus $\varphi/u\geq 0$ and $\varphi =0$ on $\del\Om$,  
since $M\geq \|u_0\|_{L^\infty}$.
Hence, applying $\varphi$ as a test function 
and using \eqref{f2}, we get 
\begin{equation}\label{GB1}
\begin{aligned}
\int_\Om \inp{A(x,u,\Xu)}{\X \varphi} \dx &= \int_\Om B(x,u,\Xu)\varphi \dx\\
&\leq\int_\Om \Big[b_0\inp{A(x,u,\Xu)}{\Xu} + f_2(|u|)\Big]\frac{\varphi}{u}\dx.
\end{aligned}
\end{equation}
Note that $\X \varphi = h'(u)\X u$ and we have 
$$ h'(u) = \frac{\varphi}{u}
+\bigg[ \beta\Big(1-\frac{\tau}{G}\Big)+(Q\beta+1)\frac{\tau}{G} \bigg] 
G^{\beta-1} \big|(1-\tau/G)^+\big|^{Q\beta}
g\bigg(\frac{|u|}{R}\bigg)\frac{u}{R},$$
which implies $h'(u) \geq (\beta+1)\varphi/u$ and 
$h'(u) \leq a_1(Q+2)(\beta+1) |(1-\tau/G)^+|^{Q\beta}G^\beta$ from 
\eqref{Gcond1}.
For every $\beta\geq 2b_0$, we obtain that
\begin{equation}\label{GB2}
  \begin{aligned}
   \frac{1}{2}\int_\Om h'(u)g(|\X u|)|\X u|\dx
   &\leq \int_\Om \Big(h'(u)- b_0\varphi/u\Big)\Big[\inp{A(x,u,\Xu)}{\Xu} + f_1(|u|)\Big]\dx\\
    &\leq \int_\Om \Big[f_2(|u|)\varphi/u +\Big(h'(u)\ - b_0\varphi/u\Big)f_1(|u|) \Big]\dx, 
  \end{aligned}
 \end{equation}
where we have used $h'(u) \geq 2b_0\, \varphi/u $ and \eqref{f1} for the first inequality and \eqref{GB1} for the second inequality of the above. 
From \eqref{GB2} and \eqref{Gcond2}, we obtain
\begin{equation}\label{GB3}
\frac{1}{2}\int_\Om h'(u)g(|\X u|)|\X u|\dx
\leq a_1(\beta+1)(2n+4)\int_\Om \big|(1-\tau/G)^+\big|^{Q\beta} G^{\beta+1}\dx.
\end{equation}
Now, leting $w = \psi(G)= \frac{1}{2}G^{\beta+1} |(1-\tau/G)^+|^{Q\beta+1}$, 
note that $|\psi'(G)|\leq h'(u)g(|u|/R)|\X u|/R$. Then, we use \eqref{eq:gG5} of Lemma \ref{lem:gandG prop} with $t= |\X u|$ and $s=|u|/R$, to obtain
\begin{equation}\label{GB4}
\begin{aligned}
\int_\Om |\X w|\dx \leq \int_\Om h'(u)g\bigg(\frac{|u|}{R}\bigg)\frac{|\X u|}{R}\dx&\leq \int_\Om h'(u)\bigg[ g\bigg(\frac{|u|}{R}\bigg)\frac{|u|}{R^2}
+g(|\X u|)\frac{|\X u|}{R}\bigg]\dx\\
& \leq \frac{c(n)}{R}a_1(\beta+1)\int_\Om \big|(1-\tau/G)^+\big|^{Q\beta} G^{\beta+1}\dx
\end{aligned}
\end{equation}
for some $c(n)>0$, 
where for the last inequality of the above, we have used \eqref{GB3} and 
\eqref{Gcond1}. Recalling Sobolev's inequality \eqref{eq:sob emb} with 
$q=1$, we have 
$$ \Big(\int_\Om w^\kappa \dx\Big)^{1/\kappa}\leq c(n)\int_\Om |\X w|\dx $$
for $\kappa= Q/(Q-1)= (2n+2)/(2n+1)$. Combining this with \eqref{GB4}, we 
obtain
\begin{equation}\label{GB5}
\Big(\int_\Om \big|(1-\tau/G)^+\big|^{\kappa(Q\beta+1)}G^{\kappa(\beta+1)}\dx\Big)^{1/\kappa}
\leq \frac{c(n)}{R}a_1(\beta+1)\int_\Om \big|(1-\tau/G)^+\big|^{Q\beta} G^{\beta+1}\dx
\end{equation}
which can be reduced to $\|v\|_{L^{\kappa\gamma}(\Om,\mu)}
\leq (\gamma/\gamma_0)^{1/\gamma}\|v\|_{L^{\gamma}(\Om,\mu)}$, where 
$v = G|(1-\tau/G)^+|^Q$, $\gamma=\beta+1, \gamma_0=2b_0+1$ and the measure $\mu$ satisfying  
$d\mu= (\frac{c(n)}{R}a_1\gamma_0)^Q(1-\tau/G)^{-Q}dx$. 
Iterating with $\gamma_m =\kappa^m\gamma_0$ for 
$m=0,1,2,\ldots$ and taking $m\to \infty$, we finally obtain
$$ \sup_\Om\, G|(1-\tau/G)^+|^Q \leq c(n) \bigg(\frac{a_1(2b_0+1)}{R}\bigg)^Q 
\int_\Om G\dx $$ for some $c(n)>0$.  
It is easy to see that this yields \eqref{eq:Gbound1}, since $\sup_\Om G >(1+a_1)\tau$ implies 
$\sup_\Om G|(1-\tau/G)^+|^Q\geq \big(\frac{a_1}{1+a_1}\big)^Q\sup_\Om G$. 
Thus, the proof is finished. 
\end{proof}
Now, we are ready to prove the global maximum principle. For the Euclidean 
setting, similar theorems have been proved before, see e.g. Theorem 10.10 in 
\cite{Gil-Tru}. 
\begin{Thm}\label{thm:maxp}
Let $ u \in HW^{1,G}(\Om) $ be a weak solution of $\Q u=0$ in $\Om$ with 
$\sup_{\del\Om} |u|<\infty$. We assume that there exists non-negative increasing 
functions $f_1,f_2$ and $g$ such that  
the conditions \eqref{f1} and \eqref{f2} hold for $R=\diam(\Om)$ and $0<b_0<1$; furthermore we assume $\Psi(t) =tg(t)$ is convex and  
$g$ satisfies \eqref{Gcond1} for some $a_1\geq 1$. Then there exists 
$c_0=c_0(n,a_1)$ sufficiently small such that, if 
$f_1$ and $f_2$ satisfy 
\begin{equation}\label{Gcond3}
f_1(|z|)+\frac{f_2(|z|)}{1-b_0}\leq c_0\Psi\bigg(\frac{|z|}{R}\bigg)
\end{equation}
for all $|z|\geq \sup_{\del\Om} |u| $, then for some $c(n,b_0,a_1)>0$, we have 
\begin{equation}\label{eq:maxp}
\sup_\Om \, G(|u|/R) \leq c(n,b_0,a_1)\sup_{\del\Om} G(|u|/R)
\end{equation}
\end{Thm}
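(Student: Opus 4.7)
The plan is to invoke the iteration estimate of Lemma~\ref{lem:Gbound1} and reduce the proof to an a priori bound of the form $\int_\Om G(|u|/R)\,dx \leq C R^Q G(M_0/R)$, where $M_0:=\sup_{\del\Om}|u|$ and $R=\diam(\Om)$. The smallness assumption \eqref{Gcond3} will be used exactly once, to absorb a Caccioppoli term on the super-level sets $\{|u|>M_0\}$.

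First I would verify that the hypotheses of Lemma~\ref{lem:Gbound1} apply with $M=M_0$. Condition \eqref{Gcond1} is already assumed; for \eqref{Gcond2}, the inequality \eqref{Gcond3} together with $\Psi(t)\leq a_1 G(t)$ gives $f_1(Rt)\leq c_0 a_1 G(t)$ and $f_2(Rt)\leq c_0 a_1(1-b_0)G(t)$ for $t\geq M_0/R$, so that
\begin{equation*}
tg(t)f_1(Rt)+G(t)f_2(Rt) \,\leq\, c_0\,a_1(a_1+1-b_0)\,G(t)^2 \,\leq\, a_1'\,G(t)^2
\end{equation*}
with $a_1':=\max\{1,c_0a_1(a_1+1-b_0)\}$. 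Lemma~\ref{lem:Gbound1} then produces
\begin{equation*}
\sup_{\Om}G(|u|/R) \,\leq\, \max\Bigl\{\tfrac{c}{R^Q}\textstyle\int_\Om G(|u|/R)\,dx,\,(1+a_1')\,G(M_0/R)\Bigr\}.
\end{equation*}

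The main step is a Caccioppoli--Poincar\'e absorption. I would test $\Q u=0$ with $\varphi=(u-M_0)^{+}\in HW^{1,G}_{0}(\Om)$, and symmetrically with $(-u-M_0)^{+}$. A pointwise case split on the sign of $B$, combined with $f_1,f_2\geq 0$, the lower bound $\langle A,\X u\rangle\geq -f_1(|u|)$ from \eqref{f1} and the bound $uB\leq b_0\langle A,\X u\rangle+f_2(|u|)$ from \eqref{f2}, yields on $\{u>M_0\}$ the pointwise inequality
\begin{equation*}
B(u-M_0) \,\leq\, b_0\langle A,\X u\rangle + b_0\,f_1(u) + f_2(u).
\end{equation*}
Plugging this into the weak form, rearranging the $\langle A,\X u\rangle$ terms (using $b_0<1$), applying \eqref{f1} again and then \eqref{Gcond3}, one arrives at
\begin{equation*}
\int_{\{u>M_0\}}G(|\X u|)\,dx \,\leq\, C(b_0,g_0)\,c_0\int_{\{u>M_0\}}G(u/R)\,dx,
\end{equation*}
and similarly on $\{u<-M_0\}$. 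The truncation $v=\sign(u)(|u|-M_0)^{+}$ lies in $HW^{1,G}_{0}(\Om)$ with $\X v=\X u\cdot\chi_{\{|u|>M_0\}}$; applying the global Poincar\'e inequality \eqref{eq:psidiam} with $\diam(\Om)=R$, and using $|u|\leq|v|+M_0$ together with doubling of $G$, gives
\begin{equation*}
\int_{\{|u|>M_0\}}\!\!G(|u|/R)\,dx \,\leq\, C(n,g_0,b_0)\,c_0\!\!\int_{\{|u|>M_0\}}\!\!G(|u|/R)\,dx \,+\, C(n,g_0)\,R^Q G(M_0/R).
\end{equation*}
Choosing $c_0$ small enough to absorb the first term on the right (the $g_0$-dependence collapses to $a_1$ via \eqref{Gcond1} and \eqref{eq:gG2}) and adding the trivial bound $G(|u|/R)\leq G(M_0/R)$ on $\{|u|\leq M_0\}$, one obtains $\int_\Om G(|u|/R)\,dx\leq CR^{Q}G(M_0/R)$. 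Substituting back into the output of Lemma~\ref{lem:Gbound1} yields \eqref{eq:maxp}.

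The main obstacle is the sign analysis in the Caccioppoli step: \eqref{f2} controls only $uB$, not $|B|$, and $\langle A,\X u\rangle$ need not be non-negative (only $\geq-f_1(|u|)$ by \eqref{f1}). The clean resolution is to case-split on the sign of $B$ and exploit that the quantity $b_0(\langle A,\X u\rangle+f_1(u))+f_2(u)$ is pointwise non-negative. Once this is in place, the Moser-type iteration provided by Lemma~\ref{lem:Gbound1} and the Poincar\'e-absorption step are routine adaptations of classical arguments of Ladyzhenskaya--Ural'tseva and Lieberman.
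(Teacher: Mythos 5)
Your proposal is correct and follows essentially the same strategy as the paper: test $\Q u=0$ with the truncation $(u-M_0)^{+}$, combine \eqref{f1}--\eqref{f2} with \eqref{Gcond3} into a Caccioppoli estimate on the super-level set, absorb via the Poincar\'e inequality \eqref{eq:psidiam} to get $\int_\Om G(|u|/R)\,dx\leq CR^Q G(M_0/R)$, and then feed this into Lemma~\ref{lem:Gbound1}. Where the two differ is cosmetic but worth noting: the paper factors $(u-M)B=(1-M/u)\cdot uB$ and then drops the factor $(1-M/u)\leq 1$, a step that as written needs $b_0\langle A,\X u\rangle + f_2\geq 0$, which does not follow directly; your sign-split on $B$, using that $b_0(\langle A,\X u\rangle+f_1)+f_2\geq 0$ pointwise by \eqref{f1}, closes that gap cleanly (at the small price of an extra $b_0 f_1$ term, harmlessly absorbed into \eqref{Gcond3}). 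You also work with $G$ rather than $\Psi=tg(t)$ and with the signed truncation $v=\sign(u)(|u|-M_0)^{+}$ rather than the paper's separate treatment of $\Om^{\pm}$ and $\Om^{0}$; both are equivalent up to the doubling constant since $G\leq\Psi\leq a_1 G$.
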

\begin{proof}
First notice that, since $\Psi(t) =tg(t)$ and $g$ is increasing, we 
have $G(t) \leq \Psi(t)$ and from \eqref{Gcond1}, we have $\Psi(t)\leq a_1G(t)$. 
These together imply that $G$ is convex and doubling and so is $\Psi$, with $2^{a_1}$ as their doubling constant. 

Let us denote $M=\sup_{\del\Om} |u|$ and $\Om^+ =\{u>M\}$. We choose 
$\varphi= (u-M)^+$ as a test function for $\Q u=0$ and use \eqref{f2} to get  
\begin{equation}\label{max1}
\begin{aligned}
\int_{\Om^+} \inp{A(x,u,\Xu)}{\X u} \dx &= \int_{\Om^+} (u-M)B(x,u,\Xu) \dx\\
&\leq\int_{\Om^+} \big(1-M/u\big)\Big[b_0\inp{A(x,u,\Xu)}{\Xu} + f_2(|u|)\Big]\dx\\
&\leq\int_{\Om^+}b_0\inp{A(x,u,\Xu)}{\Xu}\dx + \int_{\Om^+}f_2(|u|)\dx,
\end{aligned}
\end{equation}
and then we use \eqref{max1} together with \eqref{f1} and \eqref{Gcond3} to obtain 
\begin{equation}\label{max2}
\begin{aligned}
   \int_{\Om^+}\Psi(|\Xu|)\dx \leq \int_{\Om^+}\left[f_1(|u|) +\frac{f_2(|u|)}{1-b_0}\right]\dx  \leq c_0\int_{\Om^+}\Psi\bigg(\frac{|u|}{R}\bigg)\dx.
  \end{aligned}
\end{equation}
Now, from the Poincar\'{e} inequality \eqref{eq:psidiam}, we have 
\begin{equation}\label{max3}
\int_{\Om} \Psi\bigg(\frac{\varphi}{R}\bigg) \dx
\leq c(n,a_1) \int_\Om \Psi(|\X \varphi|)\dx = c(n,a_1)\int_{\Om^+}\Psi(|\Xu|)\dx. 
\end{equation}
We have $\Psi(2\varphi/R)\leq 2^{a_1}\Psi(\varphi/R)$ from the doubling 
condition and letting $\Om^*=\{u>2M\}$, notice that $\Psi(u/R)\leq \Psi(2\varphi/R)$ on $\Om^*$. Using these together with \eqref{max3} and \eqref{max2}, we get 
\begin{equation}\label{max4}
\int_{\Om^*}\Psi\bigg(\frac{|u|}{R}\bigg) \dx \leq \tau_0\int_{\Om^+}\Psi\bigg(\frac{|u|}{R}\bigg)\dx = \tau_0\bigg[\int_{\Om^*}\Psi\bigg(\frac{|u|}{R}\bigg)\dx +\int_{\Om^+\mns\Om^*}\Psi\bigg(\frac{|u|}{R}\bigg)\dx\bigg]
\end{equation}
where $\tau_0 = 2^{a_1}c(n,a_1)c_0 <1$ for small enough $c_0$. Hence, from 
\eqref{max4}, we arrive at 
$$ (1-\tau_0)\int_{\Om^*}\Psi\bigg(\frac{|u|}{R}\bigg) \dx  
\leq \tau_0\int_{\Om^+\mns\Om^*}\Psi\bigg(\frac{|u|}{R}\bigg)\dx,$$ 
which, after adding $(1-\tau_0)\int_{\Om^+\mns\Om^*}\Psi(|u|/R)dx$ on both 
sides, imply 
\begin{equation}\label{om+}
(1-\tau_0)\int_{\Om^+}\Psi\bigg(\frac{|u|}{R}\bigg) \dx  
\leq \int_{\Om^+\mns\Om^*}\Psi\bigg(\frac{|u|}{R}\bigg)\dx 
\leq |\Om^+|\Psi(2M/R).
\end{equation}
From a similar argument with $\Om^-=\{u<-M\}$, we can obtain 
\begin{equation}\label{om-}
(1-\tau_0)\int_{\Om^-}\Psi\bigg(\frac{|u|}{R}\bigg) \dx  
\leq |\Om^-|\Psi(2M/R).
\end{equation}
Now for $\Om^0=\{|u|\leq M\}$, we directly have 
\begin{equation}\label{om0}
(1-\tau_0)\int_{\Om^0}\Psi\bigg(\frac{|u|}{R}\bigg) \dx  
\leq |\Om^0|\Psi(2M/R)
\end{equation}
since $\Psi$ is increasing. Thus, adding \eqref{om+},\eqref{om-} and \eqref{om0}, 
we obtain
\begin{equation}\label{om}
(1-\tau_0)\int_{\Om}\Psi\bigg(\frac{|u|}{R}\bigg) \dx  
\leq |\Om|\Psi(2M/R).
\end{equation}
Now, if $c_0<1/a_1$, notice that multiplying $\Psi(|z|/R)$ on both sides of 
\eqref{Gcond3} and using inequality $G(t)\leq \Psi(t)\leq a_1G(t)$, we can obtain 
$$ \Psi(|z|/R)f_1(|z|)+G(|z|/R)\frac{f_2(|z|)}{1-b_0}\leq a_1 G(|z|/R)^2 $$
which is similar to \eqref{Gcond2}. Hence, we can combine \eqref{eq:Gbound1} 
of Lemma \ref{lem:Gbound1} with \eqref{om} and conclude 
$\sup_\Om G(|u|/R)\leq c(n,b_0,a_1) G(M/R)$, which completes the proof.
\end{proof}

\begin{Rem}
 With minor modifications of the above arguments, the global bound can also be shown
 corresponding to $u^+$ for weak supersolutions $u$ i.e. for $\Q u\geq 0$. 
\end{Rem}

\subsection{Harnack Inequality}\noindent
\\
Here we show that weak solutions of $\Q u=0$, satisfy Harnack inequality. The 
proofs are standard modifications of those in \cite{Tru--harnack} and 
\cite{Lieb--gen} for the Euclidean setting. We also refer to \cite{C-D-G} for 
the Harnack inequalities on special cases, in the sub-elliptic setting.

In this subsection, we consider 
\begin{align}
 \label{A1}\inp{A(x,z,p)}{p}&\geq |p|g(|p|)
 -a_1\,g\bigg(\frac{|z|}{R}\bigg)\frac{|z|}{R}-g(\chi)\chi\\
   \label{A2}|A(x,z,p)| &\leq a_2\, g(|p|) + a_3 \,g\bigg(\frac{|z|}{R}\bigg)
   +g(\chi)
\end{align}
for given non-negative constants $a_1, a_2, a_3,$ and $\chi,R>0$.  

\begin{Thm}\label{thm:supharnack}
 In $B_R\subset\Om$, let $ u \in HW^{1,G}(B_R)\cap L^\infty(B_R) $ 
 be a weak supersolution, $ \Q u \geq 0 $ with $ |u|\leq M$ in $B_R$ 
 and with the structure conditions 
 \eqref{A1},\eqref{A2} and 
 \begin{equation}\label{B1}
 \sign(z)B(x,z,p)\leq\frac{1}{R}
 \bigg[b_0\,g(|p|) + b_1\,g\bigg(\frac{|z|}{R}\bigg) + g(\chi) \bigg]
\end{equation}
for given non-negative constants $a_1, a_2, a_3,b_0,b_1$ and 
$g\in C^1([0,\infty))$ that satisfies \eqref{eq:g prop} with 
$\delta\geq 0$. Then for any 
$q>0$ and $0<\sigma<1$, there exists 
$c=c(n,g_0, a_1, a_2, a_3,b_0M,b_1,q)>0$ such that, letting $Q=2n+2$, 
we have
\begin{equation}\label{supharnack}
 \sup_{B_{\sigma R}}\, u^+ \leq \frac{c}{(1-\sigma)^{(1+g_0)Q/q}}
\bigg[ \bigg(\intav_{B_R}|u^+|^q\dx\bigg)^\frac{1}{q} +\chi R .\bigg]
\end{equation}
\end{Thm}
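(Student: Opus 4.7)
The proof is a Moser iteration on $\bar u := u^+ + \chi R$. The shift by $\chi R$ is chosen so that $\chi \le \bar u/R$ on $\{u>0\}$: by monotonicity and doubling of $g$, the inhomogeneities $g(\chi)\chi$ and $g(\chi)$ in \eqref{A1}--\eqref{B1} are then dominated by $g(\bar u/R)\bar u/R$ and $g(\bar u/R)$ respectively, so the structure conditions become effectively homogeneous in $\bar u$.

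Since $\Q u\ge 0$, the weak inequality $\int A(x,u,\Xu)\cdot\X\varphi\dx \le \int B(x,u,\Xu)\varphi\dx$ holds for every nonnegative $\varphi\in HW^{1,G}_0(B_R)$. First I would establish a Caccioppoli estimate. Let $\eta\in C_c^\infty(B_{r_2})$ be a Lipschitz cutoff between concentric balls $B_{r_1}\subset B_{r_2}\subset B_R$, with $\eta\equiv 1$ on $B_{r_1}$ and $|\X\eta|\le 2/(r_2-r_1)$. Testing against $\varphi:=\eta^{1+g_0}\bar u^\beta$ with $\beta>0$ to be iterated, expanding $\X\varphi$ and substituting \eqref{A1}--\eqref{B1}, the two terms that must be absorbed into the ``energy'' $\int\eta^{1+g_0}\bar u^{\beta-1}|\X u|g(|\X u|)\dx$ are the cross term $\eta^{g_0}|\X\eta|\bar u^\beta g(|\X u|)$ from $A\cdot\X\eta$ and the term $b_0\eta^{1+g_0}\bar u^\beta g(|\X u|)/R$ from the gradient piece of $B$. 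Both are tamed by the Young-type inequality $a\,g(t)\le \varepsilon\, t g(t)+c(\varepsilon,g_0)G(a)$, which follows from Theorem \ref{thm:YI} applied to the conjugate pair $(G,G^*)$ together with the doubling bounds \eqref{eq:gG2}--\eqref{eq:gG3'}. The result is
\begin{equation*}
\int_{B_{r_2}}\eta^{1+g_0}\bar u^{\beta-1}|\X u|g(|\X u|)\dx \ \le\ \frac{C(\beta+1)^{1+g_0}}{(r_2-r_1)^{1+g_0}}\int_{B_{r_2}}\bar u^{\beta-1}G(\bar u)\dx,
\end{equation*}
with $C=C(n,g_0,a_1,a_2,a_3,b_0M,b_1)$.

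The second step converts this into a reverse H\"older inequality. Using \eqref{eq:gG2'}--\eqref{eq:gG3'} to replace $G$ and $g$ by pure powers modulo doubling constants, and applying the Sobolev embedding (Theorem \ref{thm:sob emb}) to $\eta\,\bar u^{(\beta+g_0)/(1+g_0)}$ at exponent $1+g_0$, one obtains
\begin{equation*}
\left(\intav_{B_{r_1}}\bar u^{\kappa(\beta+g_0)}\dx\right)^{1/\kappa} \le \left(\frac{C(\beta+1)}{(r_2-r_1)/R}\right)^{1+g_0}\intav_{B_{r_2}}\bar u^{\beta+g_0}\dx,
\end{equation*}
with $\kappa=Q/(Q-1-g_0)>1$ (the case $1+g_0\ge Q$ is easier and handled separately). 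Iterating in the standard Moser fashion with exponents $\beta_m+g_0=\kappa^m q$ and geometric radii $r_m=\sigma R+(1-\sigma)R/2^{m+1}$ descending to $\sigma R$, the product of constants telescopes to give precisely $(1-\sigma)^{-(1+g_0)Q/q}$, and passing $m\to\infty$ produces the desired bound on $\bar u$, hence \eqref{supharnack} after substituting $\bar u = u^++\chi R$. For small $q$ below the natural starting range of the iteration, the classical Serrin--Trudinger interpolation reduces to the case just treated.

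\textbf{The main obstacle} is the Caccioppoli estimate: since $g$ is not a power, the Young inequality from $(G,G^*)$ must be applied with exactly the right $\varepsilon$-balance so that the two cross terms above are absorbed into $\tfrac12\int\eta^{1+g_0}\bar u^{\beta-1}|\X u|g(|\X u|)\dx$ while preserving the cutoff exponent $1+g_0$ intact; it is this sharp matching of the cutoff power $\eta^{1+g_0}$ with the ellipticity exponent $1+g_0$ from \eqref{eq:g prop} that ultimately produces the $(1+g_0)$-factor in \eqref{supharnack}. The appearance of $b_0M$ (rather than $b_0$) in the constant reflects the use of $|u|\le M$ to control the $(b_0/R)\bar u^\beta$ prefactor generated by Young on the $b_0 g(|\X u|)$ term.
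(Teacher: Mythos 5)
Your overall strategy (shift to $\bar u = u^+ + \chi R$, Caccioppoli, reverse H\"older, Moser iteration) is the right one and matches the paper's outline, and your Young-inequality absorption of the $b_0 g(|\X u|)$ term from $B$ is a legitimate alternative to the paper's device of putting an exponential weight $e^{b_0\bar u}$ in the test function. However, there is a genuine gap in the step that turns the Caccioppoli estimate into a reverse H\"older inequality, and it is precisely the step you label as ``using \eqref{eq:gG2'}--\eqref{eq:gG3'} to replace $G$ and $g$ by pure powers modulo doubling constants'' before applying Sobolev at exponent $1+g_0$ to $w=\eta\,\bar u^{(\beta+g_0)/(1+g_0)}$. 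This replacement is not valid. The relations \eqref{eq:gG3'} only say that $t\mapsto g(t)/t^{g_0}$ is decreasing and $t\mapsto g(t)/t^\delta$ is increasing; they give $g(t)\lesssim t^{g_0}$ only for $t\ge 1$ and $g(t)\gtrsim t^{g_0}$ only for $t\le 1$, with the opposite inequalities on the complementary ranges. Consequently neither $|\X u|\, g(|\X u|)\gtrsim |\X u|^{1+g_0}$ nor $(\bar u/R)^{1+g_0}\lesssim G(\bar u/R)$ hold pointwise, so the term $\eta^{1+g_0}\bar u^{\beta-1}|\X\bar u|^{1+g_0}$ appearing in $|\X w|^{1+g_0}$ is \emph{not} controlled by the Caccioppoli quantity $\eta^{1+g_0}\bar u^{\beta-1}|\X u|\,g(|\X u|)$, and your claimed reverse H\"older inequality does not follow. (If you compute the $(1-\sigma)$ exponent your iteration would produce, using $\kappa=Q/(Q-1-g_0)$ you get $(1-\sigma)^{-Q/q}$, not $(1-\sigma)^{-(1+g_0)Q/q}$ as claimed; this mismatch is a symptom of the scheme not matching the actual growth.)

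The paper sidesteps this issue by never leaving the Orlicz framework: it tests with $\varphi=\eta^\gamma\bar u\,G(\eta\bar u/R)^{\beta-1}e^{b_0\bar u}$ and iterates the quantity $G(\eta\bar u/R)^\beta$ in $L^1$, applying the endpoint Sobolev inequality ($q=1$, $\kappa=Q/(Q-1)$) to $w=\eta^{-Q\theta}G(\eta\bar u/R)^\beta$ with $\theta=2+2g_0$. The cross terms are then handled via the elementary inequality \eqref{eq:gG5}, $tg(s)\le tg(t)+sg(s)$, together with doubling of $g$, rather than by converting $g$ to a power. This is the key technical device that makes the iteration work for general $g$ satisfying \eqref{eq:g prop}. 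If you wish to keep the power-type test function $\eta^{1+g_0}\bar u^\beta$, you must still iterate a quantity built from $G$ (e.g.\ $G(\eta\bar u/R)$), not $\bar u$ itself, and apply Sobolev at exponent $1$. One further small point: the appearance of $b_0M$ (rather than $b_0$) in the constant is not explained by your Young absorption, whose error term $G(b_0\bar u/R)$ yields a constant depending only on $b_0$; in the paper it comes from the exponential weight bound $e^{b_0\chi R}\le e^{b_0\bar u}\le e^{b_0(M+\chi R)}$, which is where $|u|\le M$ is used.
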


\begin{proof}
 The proof is based on Moser's iteration, similar to that of 
 Theorem 1.2 in \cite{Lieb--gen}. We provide an outline. First notice that, 
 using $\bar{z} = z + \chi R$, the structure conditions 
 \eqref{A1},\eqref{A2} and \eqref{B1} can be reduced to 
 \begin{align}
  \label{A1'}\inp{A(x,z,p)}{p}&\geq |p|g(|p|)
 -(1+a_1)g\big(|\bar z|/R\big)|\bar z|/R;\\
   \label{A2'}|A(x,z,p)| &\leq a_2\, g(|p|) + 
   (1+a_3)g\big(|\bar z|/R\big);\\
   \label{B1'}\bar zB(x,z,p)&\leq b_0|p|g(|p|) 
   + (1+b_0+b_1)g\big(|\bar z|/R\big)|\bar z|/R.
 \end{align}
To obtain \eqref{B1'}, we multiply $\bar z$ on \eqref{B1} and use 
\eqref{eq:gG5} of Lemma \ref{lem:gandG prop} with 
$t=|\bar z|/R$ and $s= |p|$.

Hence, we use $\bar u = u^+ +\chi R$ for the proof. Given any 
$\sigma\in(0,1)$, we choose a standard cutoff function 
$ \eta \in C^\infty_0(B_R) $ such that 
$ 0\leq \eta\leq 1,\ \eta= 1$ in $B_{\sigma R}$ and 
$ |\X\eta| \leq 2/(1-\sigma)R$. Then, for some $\gamma\in\R$ and 
$\beta\geq 1+|\gamma|$ which are chosen later, we use 
$$\varphi= \eta^\gamma\bar u\, G(\eta\bar u/R)^{\beta-1}e^{b_0\bar u}$$ as 
a test function for $\Q u\geq 0$, to get
\begin{equation}\label{suph1}
 \begin{aligned}
  (1+b_0)\int_{B_R}\eta^\gamma G(\eta\bar u/R&)^{\beta-1}e^{b_0\bar u}
  \inp{A(x,u,\Xu)}{\X\bar u}\dx \\
  +\ \frac{\beta-1}{R}
  \int_{B_R}\eta^\gamma\bar u\, G(\eta&\bar u/R)^{\beta-2}g(\eta\bar u/R)e^{b_0\bar u}\inp{A(x,u,\Xu)}{\X\bar u}\dx\\
  \leq\ -\frac{\beta-1}{R}
   \int_{B_R}\eta^\gamma&|\bar u|^2 G(\eta\bar u/R)^{\beta-2}g(\eta\bar u/R)e^{b_0\bar u}\inp{A(x,u,\Xu)}{\X\eta}\dx\\
   -\ \gamma\int_{B_R}&\eta^{\gamma-1}\bar u\, G(\eta\bar u/R)^{\beta-1}
   e^{b_0\bar u}\inp{A(x,u,\Xu)}{\X\eta}\dx\\
   +\ \int_{B_R}&\eta^\gamma G(\eta\bar u/R)^{\beta-1}e^{b_0\bar u}
   \bar u\, B(x,u,\Xu)\dx.
 \end{aligned}
\end{equation}
Now we use the structure condition \eqref{A1'} for the left hand side and 
\eqref{A2'},\eqref{B1'} for the right hand side of the above inequality.
Then, we use \eqref{eq:gG2} and \eqref{eq:gG3} of Lemma \ref{lem:gandG prop} and also 
the fact that $ e^{b_0\chi R}\leq e^{b_0\bar u}\leq e^{b_0(M+\chi R)}$, 
since $|u|\leq M$ in $B_R$. We obtain
\begin{equation}\label{suph2}
 \begin{aligned}
  \beta\int_{B_R}\eta^\gamma &G(\eta\bar u/R)^{\beta-1}g(|\X\bar u|)|\X\bar u|\dx\\
  &\leq \frac{a_2\beta\,e^{b_0M}}{(1-\sigma)}
 \int_{B_R}\eta^{\gamma-1}G(\eta\bar u/R)^{\beta-1}\frac{\bar u}{R}\,g(|\X\bar u|)\dx\\
&\quad + \beta(1+g_0)C_1 e^{b_0M}
  \int_{B_R}\eta^{\gamma-1}G(\eta\bar u/R)^{\beta-1}
  g\bigg(\frac{\bar u}{R}\bigg)\frac{\bar u}{R}\dx\\
&=  I_1 + I_2
 \end{aligned}
\end{equation}
where $C_1 = (1+a_1)(1+b_0)+(1+b_0+b_1)+(1+a_3)/(1-\sigma)$. 
Here onwards, we use $c=c(n,g_0,a_1,a_2,a_3,b_0M, b_1)>0$ as a large enough 
constant, throughout 
the rest of the proof. 
Now we estimate both $I_1$ and $I_2$ as follows.

For $I_1$, we use \eqref{eq:gG5} with 
$t=\frac{2}{(1-\sigma)}a_2e^{b_0M}\bar u/\eta R $ and $s=|\X \bar u|$, 
to obtain
\begin{equation}\label{suphI1}
 \begin{aligned}
  I_1 \leq\ &\frac{\beta}{2}\int_{B_R}\eta^\gamma G(\eta\bar u/R)^{\beta-1}
  g(|\X\bar u|)|\X\bar u|\dx\\ 
  &+ \frac{c\beta}{(1-\sigma)}
 \int_{B_R}\eta^\gamma G(\eta\bar u/R)^{\beta-1}\frac{\bar u}{\eta R}
 \,g\bigg(\frac{\bar u}{(1-\sigma)\eta R}\bigg)\dx\\
 \leq\ &\frac{\beta}{2}\int_{B_R}\eta^\gamma G(\eta\bar u/R)^{\beta-1}
  g(|\X\bar u|)|\X\bar u|\dx\\ 
  &+ \frac{c\beta}{(1-\sigma)^{1+g_0}}
  \int_{B_R}\eta^{\gamma-(2+2g_0)} G(\eta\bar u/R)^{\beta}\dx,
 \end{aligned}
\end{equation}
where we have used $ g(\bar u/\eta R) \leq \eta^{-2g_0}g(\eta\bar u/R) $ for 
the latter inequality of the above. 

For $I_2$, we trivially have 
\begin{equation}\label{suphI2}
 I_2 \leq \frac{c\beta}{(1-\sigma)} \int_{B_R}
 \eta^{\gamma-1}G(\eta\bar u/R)^\beta\dx.
\end{equation}
Letting $\theta = 2+2g_0$ and combining \eqref{suph2} 
with \eqref{suphI1} and \eqref{suphI2}, we obtain
\begin{equation}\label{suph3}
 \frac{\beta}{2}\int_{B_R}\eta^\gamma G(\eta\bar u/R)^{\beta-1}
  g(|\X\bar u|)|\X\bar u|\dx\leq \frac{c\beta}{(1-\sigma)^{\theta/2}}
  \int_{B_R}\eta^{\gamma-\theta}G(\eta\bar u/R)^\beta\dx.
\end{equation}
Now, we use Sobolev inequality 
$$ \Big(\int_{B_R} |w|^\kappa\dx\Big)^\frac{1}{\kappa} 
\leq c(n)\int_{B_R} |\X w|\dx $$ 
for $\kappa = Q/(Q-1)=(2n+2)/(2n+1)$ and 
$w= \eta^\gamma G(\eta\bar u/R)^\beta$ with the choice of 
$\gamma = -(Q-1)\theta$, so that $\kappa\gamma = -Q\theta = \gamma-\theta$. 
Combining with \eqref{suph3}, we obtain 
$$ \Big(\int_{B_R} \eta^{-Q\theta}
G(\eta\bar u/R)^{\kappa\beta}\dx\Big)^\frac{1}{\kappa} 
\leq  \frac{c\beta}{(1-\sigma)^{\theta/2}}\int_{B_R} 
\eta^{-Q\theta}G(\eta\bar u/R)^\beta\dx.$$
Iterating the above with $\beta_0 = q\geq Q\theta$ and 
$\beta_m=\kappa^m\beta_0$ and letting $m\to \infty$, we get 
\begin{equation}\label{suphG}
 \sup_{B_R}\,G(\eta\bar u/R) \leq 
\frac{c(q)}{(1-\sigma)^{Q\theta/2q}}\Big(\intav_{B_R}\eta^{-Q\theta}
G(\eta\bar u/R)^q\dx\Big)^\frac{1}{q}.
\end{equation}
Hence, using \eqref{eq:gG2}, we get 
$$ \sup_{B_{\sigma R}} \bar u\leq 
\frac{c(q)}{(1-\sigma)^{Q\theta/2q}}\Big(\intav_{B_R} 
|\bar u|^q\dx\Big)^\frac{1}{q}$$
for all $q\geq Q\theta$ and 
$c(q) = c(n,g_0,a_1,a_2,a_3,b_0M, b_1,q)>0$. Then from the interpolation 
argument in \cite{Dib-Tru}, we get the above for all $q>0$. This 
concludes the proof.
\end{proof}

\begin{Thm}\label{thm:infharnack}
  In $B_R\subset\Om$, let $ u \in HW^{1,G}(B_R)\cap L^\infty(B_R) $ 
  be a weak subsolution, $ \Q u \leq 0 $ with 
  $ 0\leq u\leq M$ in $B_R$ 
  and with the structure conditions 
  \eqref{A1},\eqref{A2} and 
  \begin{equation}\label{B2}
  \sign(z)B(x,z,p)\geq -\frac{1}{R}
  \bigg[b_0\,g(|p|) + b_1\,g\bigg(\frac{|z|}{R}\bigg) + g(\chi) \bigg]
  \end{equation}
for given non-negative constants $a_1, a_2, a_3,b_0,b_1$ and 
$g\in C^1([0,\infty))$ that satisfies \eqref{eq:g prop} with 
$\delta> 0$. Then there exists positive constants $q_0$ and $c$ depending on 
$n,\delta,g_0, a_1, a_2, a_3,b_0M,b_1$ such that, letting $Q=2n+2$, 
we have
\begin{equation}\label{infharnack}
\bigg(\intav_{B_{R/2}}u^{q_0}\dx\bigg)^\frac{1}{q_0} \leq\, 
c\,\Big(\inf_{B_{R/4}}\, u +\chi R\Big)
\end{equation}
\end{Thm}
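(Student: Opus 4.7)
The plan is to follow the classical Moser--Trudinger scheme for the weak Harnack inequality, adapted to the Orlicz--Heisenberg setting. As in the proof of Theorem \ref{thm:supharnack}, the first step is to replace $u$ by $\bar u = u+\chi R$, which is strictly positive in $B_R$ and for which the structure conditions \eqref{A1}, \eqref{A2}, \eqref{B2} reduce to a homogeneous form (compare \eqref{A1'}--\eqref{B1'}, after swapping signs to reflect $\Q u\leq 0$). The proof then has two main stages: a reverse Moser iteration with negative exponents, and a logarithmic Caccioppoli feeding into a John--Nirenberg argument.

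For the first stage, fix $\beta<0$ (away from a single critical value) and a standard cutoff $\eta\in C_0^\infty(B_{\sigma R})$ with $\eta\equiv 1$ on $B_{\sigma'R}$ and $|\X\eta|\leq 2/((\sigma-\sigma')R)$. Test $\Q u\leq 0$ against the non-negative test function $\varphi=\eta^\gamma\,\bar u\,G(\eta\bar u/R)^{\beta-1}$ used in Theorem \ref{thm:supharnack}, now in a negative-exponent regime. Running the computation \eqref{suph1}--\eqref{suph3} verbatim, using \eqref{eq:gG2'}--\eqref{eq:gG3'} (where $\delta>0$ is essential to compare $g(s)/g(t)$ with $(s/t)^\delta$ and absorb negative powers of $\bar u$), and Young's inequality, one obtains the reverse Caccioppoli-type estimate
$$\Big(\intav_{B_{\sigma'R}}\bar u^{\,\kappa\beta}\dx\Big)^{1/\kappa}\,\leq\,\frac{c\,|\beta|^\tau}{(\sigma-\sigma')^\theta}\intav_{B_{\sigma R}}\bar u^{\,\beta}\dx,$$
with $\kappa=Q/(Q-1)$ from the sub-elliptic Sobolev embedding \eqref{eq:sob emb}. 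Since $\beta<0$, iterating with $\beta_m=-\kappa^m q$ as $m\to\infty$, together with a DiBenedetto--Trudinger-type interpolation, converts an $L^q$-average of $\bar u^{-1}$ into $\sup_{B_{R/4}}\bar u^{-1}$, and yields for every $q>0$
$$\Big(\intav_{B_{R/2}}\bar u^{-q}\dx\Big)^{-1/q}\,\leq\,c\,\inf_{B_{R/4}}\bar u.\quad(\star)$$

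The second stage is a Caccioppoli inequality for $\log\bar u$, which is the borderline $\beta=0$ case of the above iteration. Testing $\Q u\leq 0$ against $\varphi=\eta^\gamma\bar u^{-1}$ and using \eqref{A1}--\eqref{B2} together with \eqref{eq:gG5} and Young's inequality, one gets
$$\intav_{B_{R/2}}G\bigl(R|\X\log\bar u|\bigr)\dx\,\leq\,c.$$
The $(\Psi,\Psi)$-Poincar\'e inequality of Theorem \ref{thm:psipsi} with $\Psi=G$ then bounds $\intav_{B_{R/2}}G(|\log\bar u-\{\log\bar u\}_{B_{R/2}}|)\dx$ by a universal constant, and since $G$ is convex and doubling this places $\log\bar u$ in $\mathrm{BMO}(B_{R/2})$ with uniform norm. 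The John--Nirenberg inequality on the doubling space $(\hh^n,\dhh,dx)$ then supplies $q_0>0$ and $C>0$ with
$$\Big(\intav_{B_{R/2}}\bar u^{q_0}\dx\Big)^{1/q_0}\Big(\intav_{B_{R/2}}\bar u^{-q_0}\dx\Big)^{1/q_0}\,\leq\,C.\quad(\dagger)$$
Concatenating $(\dagger)$ with $(\star)$ at $q=q_0$ gives $(\intav_{B_{R/2}}\bar u^{q_0})^{1/q_0}\leq c\inf_{B_{R/4}}\bar u$, which is \eqref{infharnack} after restoring $\bar u=u+\chi R$.

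The main obstacle I foresee is the first stage: the dependence of constants on the negative exponent $\beta$ must be tracked carefully so the Moser sequence $\beta_m$ actually converges, and the critical exponent $\beta_*\sim-(1+b_0)$ (where the bilinear coefficient produced on the left-hand side of the Caccioppoli degenerates) has to be circumvented, typically by a shift of the starting exponent or a continuity-in-$\beta$ argument. The second stage is a secondary obstacle: with the $(\Psi,\Psi)$-Poincar\'e already at hand, the passage from the $G$-gradient bound on $\log\bar u$ to exponential integrability essentially reduces to the doubling and convexity of $G$ combined with the standard John--Nirenberg theorem on spaces of homogeneous type.
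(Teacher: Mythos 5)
Your overall two-stage strategy (Moser iteration in negative exponents giving $(\star)$, followed by a logarithmic estimate plus John--Nirenberg giving $(\dagger)$, then concatenating) is exactly the paper's route, and your Stage~1 is essentially correct (the paper's test function is $\varphi=\eta^\gamma\bar u\,G(\bar u/\eta R)^{\beta-1}e^{-b_0\bar u}$; you refer to the $\eta^\gamma\bar u\,G(\eta\bar u/R)^{\beta-1}e^{b_0\bar u}$ of Theorem~\ref{thm:supharnack} but then write it without the exponential factor --- that factor, with sign flipped to $-b_0\bar u$ for the subsolution case, is what absorbs the $b_0 g(|p|)$ term in \eqref{B2}, so it cannot be dropped).

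Stage~2, however, has a genuine gap. Testing $\Q u\leq 0$ against $\varphi=\eta^\gamma\bar u^{-1}$ does not produce a homogeneous Caccioppoli estimate in the Orlicz setting: the resulting left-hand side integrand is $\eta^\gamma g(|\X\bar u|)|\X\bar u|/\bar u^2$, which for $g(t)=t^{p-1}$ reads $\eta^\gamma|\X\bar u|^p/\bar u^2$ --- the wrong power of $\bar u$ unless $p=2$. The paper instead uses $\varphi=\eta^{g_0}\bar u\,G(\bar u/r)^{-1}$, whose key feature is that the $\delta>0$ hypothesis forces the derivative factor $G(\bar u/r)^{-1}-G(\bar u/r)^{-2}g(\bar u/r)\bar u/r$ to be bounded above by $-\delta/(1+\delta)\cdot G(\bar u/r)^{-1}$, yielding the scale-invariant estimate $\int_{B_r}\eta^{g_0}\,g(|\X\bar u|)|\X\bar u|/G(\bar u/r)\,dx\leq c\,r^Q$ (the right-hand side is controlled because $g(\bar u/r)\bar u/r\leq(1+g_0)G(\bar u/r)$, not because $\bar u$ is bounded away from zero). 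Moreover, your claim that this yields $\intav_{B_{R/2}}G(R|\X\log\bar u|)\dx\leq c$ does not follow: one would need $g(\xi/s)g(s)\lesssim g(\xi)$ uniformly in $\xi,s$, which fails for general $g$ satisfying \eqref{eq:g prop} (e.g.\ $g(t)=t^{p-1}(1+|\log t|)$). Fortunately this stronger $G$-bound is unnecessary. The paper passes from the Caccioppoli estimate to the $L^1$ bound $\int_{B_{r/2}}|\X\bar u|/\bar u\,dx\leq c\,r^{Q-1}$ via inequality \eqref{eq:gG5} (i.e.\ $t\leq tg(t)/g(s)+s$ with $t=|\X\bar u|$, $s=\bar u/r$), then applies the plain $(1,1)$-Poincar\'e inequality to $w=\log\bar u$ at every scale $r\leq R$ to place $w$ in $\mathrm{BMO}$, and only then invokes John--Nirenberg on the metric measure space. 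You should replace the $(\Psi,\Psi)$-Poincar\'e step with this $(1,1)$ argument.
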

\begin{proof}
 Taking $\bar u = u+\chi R$ and $\eta\in C^\infty_0(B_{R/2})$ similarly as in the proof of Theorem \ref{thm:supharnack}, we can use the test function $\varphi = \eta^\gamma \bar u 
 G(\bar u/\eta R)e^{-b_0\bar u}$ on $\Q u\leq 0$ and obtain 
 \begin{equation}\label{infh1}
\bigg(\intav_{B_{R/2}}\bar{u}^{-q}\dx\bigg)^{-\frac{1}{q}} \leq \, 
c(q)\inf_{B_{R/4}}\, \bar u 
 \end{equation}
for any $q>0$. Now for any $0<r\leq R$, we choose 
$\eta \in C^\infty_0(B_r)$ such that $\0\leq \eta\leq 1, \eta =1$ in 
$B_{r/2}$ and $|\X \eta|\leq 2/r$. Then we choose test function 
$\varphi = \eta^{g_0}\bar u G(\bar u/r)^{-1}$ in $\Q u\leq 0$. Here we use 
the fact that $g$ satisfies \eqref{eq:g prop} with $\delta>0$, so that 
from \eqref{eq:gG2'} and \eqref{eq:gG3'}, we have 
$$ G(\bar u/r)^{-1} -G(\bar u/r)^{-2}g(\bar u/r)\bar u/r 
\leq -G(\bar u/r)^{-1}\delta /(1+\delta).$$ 
Thus, using test function $\varphi$ and structure conditions 
\eqref{A1'},\eqref{A2'} and \eqref{B2}, we obtain
\begin{equation*}
 \int_{B_r}\eta^{g_0}\,\frac{g(|\X\bar u|)|\X\bar u|}{G(\bar u/r)}\dx \,
 \leq c\int_{B_r}\left[(a_1+a_3+b_0+b_1)\,
 \frac{g(\bar u/r)\bar u/r}{G(\bar u/r)}\right]\dx\, \leq cr^Q
\end{equation*}
where we suppress the dependence of $a_i,b_j,g_0,\delta$ and denote 
constant as $c$. Now, recalling \eqref{eq:gG5}, we use 
$t\leq tg(t)/g(s) + s$, with $t=|\X u|$ and $s= \bar u /r$, to obtain
\begin{equation}\label{infh2}
 \begin{aligned}
  \int_{B_{r/2}} \frac{|\X\bar u|}{\bar u}\dx &\leq
  \int_{B_{r/2}} \Big[\frac{g(|\X\bar u|)|\X\bar u|}{\bar u g(\bar u/r)} 
  + \frac{1}{r}\Big] \dx\\
  &\leq \frac{c}{r} \int_{B_r}\Big[\eta^{g_0}\, 
  \frac{g(|\X\bar u|)|\X\bar u|}{G(\bar u/r)} + 1\Big]\dx \,
   \leq c\,r^{Q-1}
 \end{aligned}
\end{equation}
Taking $ w = \log(\bar u) $, we use Poincar\'{e} inequality and \eqref{infh2} 
to get 
\begin{equation*}
 \begin{aligned}
  \intav_{B_{r/2}}|w -\{w\}_{B_{r/2}}|\dx\,
  \leq c\,r\intav_{B_{r/2}}|\X w|\dx
   = \frac{c}{r^{Q-1}}\int_{B_{r/2}} \frac{|\X\bar u|}{\bar u}\dx\,
   \leq\, c,
 \end{aligned}
\end{equation*}
which shows that $w\in \textnormal{BMO}(B_{r/2})$. 
John-Nirenberg type inequalities in the setting 
of metric spaces with doubling measures, is known; we refer to 
\cite{Buck}. This is applicable in our setting and the above 
inequality imples exponential inetegrability for $w= \log(\bar u)$. Thus 
there exists $q_0>0$ and $c_0>0$ such that 
\begin{equation}\label{infh3}
 \bigg(\intav_{B_{r/2}}\bar {u}^{-q_0}\dx\bigg)\bigg(\intav_{B_{r/2}}
 \bar {u}^{q_0}\dx\bigg)
  \leq \bigg(\intav_{B_{r/2}}e^{q_0|w -\{w\}_{B_{r/2}}|}\dx\bigg)^2
   \leq\, c_0^2. 
\end{equation}
for any $r\leq R$. 
Thus, \eqref{infh1} with $q=q_0$ and \eqref{infh3}, concludes the proof.
\end{proof}
From Theorem \ref{thm:supharnack} and Theorem \ref{thm:infharnack}, 
the following corrollary is immediate. 
\begin{Cor}\label{cor:harnack}
  In $B_R\subset\Om$, let $ u \in HW^{1,G}(B_R)\cap L^\infty(B_R) $ 
  be a weak solution of $ \Q u = 0 $ with 
  $ 0\leq u\leq M$ in $B_R$ 
  and with the structure conditions 
  \eqref{A1},\eqref{A2} and 
  \begin{equation}\label{B}
  |B(x,z,p)|\leq \frac{1}{R}
  \bigg[b_0\,g(|p|) + b_1\,g\bigg(\frac{|z|}{R}\bigg) + g(\chi) \bigg]
  \end{equation}
for given non-negative constants $a_1, a_2, a_3,b_0,b_1$ and 
$g\in C^1([0,\infty))$ that satisfies \eqref{eq:g prop} with 
$\delta> 0$. Then there exists 
$c=c(n,\delta,g_0, a_1, a_2, a_3,b_0M,b_1)>0$ such that  
we have
\begin{equation}\label{eq:harnack}
\sup_{B_{R/4}}\, u \leq\, 
c\,\Big(\inf_{B_{R/4}}\, u +\chi R\Big)
\end{equation}
\end{Cor}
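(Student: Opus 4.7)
The plan is to derive this Harnack inequality as a direct consequence of Theorem \ref{thm:supharnack} and Theorem \ref{thm:infharnack}, noting that all pieces are already in place and the corollary is essentially a chaining of these two bounds applied to the solution $u$.

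First I would observe that since $\Q u = 0$, the function $u$ is simultaneously a weak supersolution and a weak subsolution, so both Theorem \ref{thm:supharnack} and Theorem \ref{thm:infharnack} are applicable to $u$. The two-sided bound \eqref{B} on $B$ immediately implies the one-sided hypotheses of both theorems: indeed $|B(x,z,p)| \leq \tfrac{1}{R}[b_0 g(|p|) + b_1 g(|z|/R) + g(\chi)]$ yields $\sign(z)B(x,z,p)$ bounded above by the same expression (giving \eqref{B1}) and below by its negative (giving \eqref{B2}). Since $0 \leq u \leq M$, we also have $u^+ = u$, so the supremum estimate of Theorem \ref{thm:supharnack} applies to $u$ itself.

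The key step is then the following chain. Theorem \ref{thm:infharnack} produces a fixed exponent $q_0 = q_0(n,\delta,g_0,a_1,a_2,a_3,b_0M,b_1) > 0$ and a constant $c > 0$ such that
\begin{equation*}
\bigg(\intav_{B_{R/2}} u^{q_0} \dx\bigg)^{1/q_0} \,\leq\, c\,\Big(\inf_{B_{R/4}} u + \chi R\Big).
\end{equation*}
Next I would apply Theorem \ref{thm:supharnack} with the ball $B_{R/2}$ in place of $B_R$, with $\sigma = 1/2$, and with $q = q_0$; this gives
\begin{equation*}
\sup_{B_{R/4}} u \,\leq\, c\bigg[\bigg(\intav_{B_{R/2}} u^{q_0} \dx\bigg)^{1/q_0} + \chi R\bigg],
\end{equation*}
where the constant absorbs the $(1-\sigma)^{-(1+g_0)Q/q_0}$ factor since $\sigma = 1/2$ and $q_0$ is fixed. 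Combining the two displayed inequalities yields \eqref{eq:harnack} with a constant depending only on the allowed parameters.

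There is essentially no obstacle beyond careful bookkeeping: one has to verify that the structural constants in Theorems \ref{thm:supharnack} and \ref{thm:infharnack} line up (they do, since both use the same \eqref{A1} and \eqref{A2}), check that $\delta > 0$ is needed only in the infimum estimate (where it is already required), and confirm that passing from the radii $R$ and $R/2$ to $R/4$ only affects constants. Note that the standard slightly stronger form with $\sup_{B_{R/8}}$ on the left could also be derived by applying Theorem \ref{thm:supharnack} on $B_{R/4}$ with $\sigma = 1/2$, but the stated version on $B_{R/4}$ is obtained as above.
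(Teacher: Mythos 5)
Your proof is correct and coincides with the paper's intended argument: the paper states the corollary as ``immediate'' from Theorems \ref{thm:supharnack} and \ref{thm:infharnack}, and you have spelled out exactly the chaining involved, including the necessary choice of applying the supremum estimate on $B_{R/2}$ (rather than $B_R$) so that the $L^{q_0}$-average appearing in both bounds is taken over the same ball. The one detail left implicit is that the structure conditions \eqref{A1}, \eqref{A2}, \eqref{B1} stated with parameter $R$ continue to hold with $R$ replaced by $R/2$ (this follows since $g$ and $t\mapsto t g(t)$ are increasing, so halving $R$ only weakens each inequality), which is what legitimizes invoking Theorem \ref{thm:supharnack} on $B_{R/2}$.
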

Thus, bounded weak solutions satisfy the Harnack inequality 
\eqref{eq:harnack}, which implies the H\"older continuity of weak 
solutions. By standard arguments, it is possible to show that there 
exists $\alpha=\alpha(n,\delta,g_0, a_1, a_2, a_3,b_0M,b_1)\in (0,1)$ 
and $c=c(n,\delta,g_0, a_1, a_2, a_3,b_0M,b_1)>0$ such that, we have  
\begin{equation}\label{eq:osc u}
\osc_{B_r}u \leq c\left(\frac{r}{R}\right)^\alpha
\Big(\osc_{B_R}u + \chi R\Big).
\end{equation}
for every $0<r<R$ and $B_R\subset\Om$. This is enough to prove 
Theorem \ref{thm:c0alpha}.

\begin{Rem}\label{rem:maxinholder}
The growth and ellipticity conditions \eqref{A1},\eqref{A2} and \eqref{B} are special cases of the more general conditions in \eqref{f1} and \eqref{f2}. When 
$g$ satisfies \eqref{eq:g prop}, it is easy to see that \eqref{Gcond1} holds with $a_1=1+g_0$ and 
\eqref{f1}, \eqref{f2} and 
\eqref{Gcond2} holds if $f_1(|z|),f_2(|z|)\sim  g(|z|/R)|z|/R + g(\chi)\chi$. 
Therefore, it is not restrictive to assume $|u|\leq M$ since
we have Theorem \ref{thm:maxp} for the above cases. Furthermore, 
\eqref{B} can be relaxed to 
\begin{equation}\label{B'}
|zB(x,z,p)|\leq b_0|p|g(|p|) 
   + b_1 g\bigg(\frac{|z|}{R}\bigg)\frac{|z|}{R} + g(\chi)\chi
\end{equation}
so that, in this case \eqref{B1'} can be obtained immediately. 
\end{Rem}

\section{H\"older continuity of Horizontal gradient}\label{sec:LocalH}
In this section, we consider a homogenous quasilinear equation where the operator does not depend on $x$ and $u$. Estimates for this equation shall be 
necessary in Section \ref{sec:c1alpha}. However, 
all results in this section are obtained independently, without any reference to 
the rest of this paper, apart from the usage of the structure function $g$ in \eqref{eq:g prop}. 

We warn the reader that in this section $z$ is used as a variable in $\R^{2n}$, unlike the other sections. This is done to maintain continuity with \cite{Muk0}.

In a domain $\Om \subset \hh^n$, we consider   
\begin{equation}\label{eq:maineq}
\dvh (\A(\X u)) =  0 \ \ \text{in}\ \Om,
\end{equation}
where 
$\A : \R^{2n} \to \R^{2n}$ is a given $C^1$ function. We denote $ \A(z) = (\A_1(z),\A_2(z),\ldots,\A_{2n}(z))$ 
for all $z \in \R^{2n}$  
and $D\A(z)$ as the $2n\times 2n$ Jacobian matrix 
$ (\del \A_i(z)/\del z_j)_{ij}$. 
We assume that $D\A(z)$ is symmetric and 
satisfies
\begin{equation}\label{eq:str cond diff}
 \begin{aligned}
 \F(|z|)|\xi|^2 \leq \,&\inp{D\A(z)\,\xi}{\xi}\leq L\,\F(|z|)|\xi|^2;\\
 &|\A(z)|\leq L\,|z|\F(|z|). 
\end{aligned} 
\end{equation}
for every $ z,\xi \in \R^{2n}$ and $L\geq 1$, where we denote $ \F(t) = g(t)/t$  
maintaining the notation \eqref{eq:GandF}. Here $ g: [0,\infty) \to [0,\infty) $ is 
a given $C^1$ function satisfying \eqref{eq:g prop} and $g(0) =0$. 

The above equation has been considered 
previously in \cite{Muk0} where local boundedness of 
$\X u$ for a weak solution $u$ of \eqref{eq:maineq}, has been established. The goal of this section is to prove the local H\"older continuity of $\X u$. 
We restate Theorem \ref{thm:holder1} here, which is the main result of this section.
\begin{Thm}\label{thm:holder}
Let $u\in HW^{1,G}(\Om)$ be a weak solution of the equation \eqref{eq:maineq} with structure condition \eqref{eq:str cond diff} and $g$ satisfies \eqref{eq:g prop} with $\delta>0$. Then 
$\X u$ is locally H\"older continuous and there exists 
$\sigma=\sigma(n,g_0,L)\in (0,1)$ such that for any $B_{r_0}\subset\Om$ and 
$0<r<r_0/2$, we have 
\begin{equation}\label{eq:holder}
\max_{1\leq l\leq 2n}\intav_{B_r}G(|X_l u - \{X_l u\}_{B_r}|)\dx \leq c\Big(\frac{r}{r_0}\Big)^\sigma
\intav_{B_{r_0}} G(|\X u|)\dx 
\end{equation}
where $c>0$ depends on $n,\delta,g_0,L$. 
\end{Thm}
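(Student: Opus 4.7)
I follow the strategy of \cite{Muk-Zhong}, adapting the Zhong argument \cite{Zhong} for the Heisenberg $p$-Laplacian to the $G$-growth setting, combined with the Tolksdorf--Lieberman double truncation \cite{Tolk, Lieb--bound}. The target is an oscillation-decay estimate of the form $\osc_{B_{r/2}} X_l u \le \lambda\, \osc_{B_r} X_l u$ plus a controlled error, with some $\lambda \in (0,1)$, which iterates dyadically into the $G$-Campanato bound \eqref{eq:holder}; H\"older continuity of $\X u$ is then a direct consequence, via an Orlicz-analogue of the embedding \eqref{eq:holdcamp}.

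First I would invoke the local Lipschitz bound for $\X u$ established in \cite{Muk}, so that $|\X u| \le M$ on a slightly enlarged ball $B_{r_0}$, making $\F(|\X u|) = g(|\X u|)/|\X u|$ pinched between positive constants wherever $|\X u|$ is bounded away from zero. Differentiating the equation $\dvh(\A(\X u)) = 0$ in a horizontal direction $X_l$ and using the commutator $[X_i,X_{n+i}] = T$ from \eqref{eq:comm} yields a linearised equation for $X_l u$ whose leading part is uniformly elliptic with weight $\F(|\X u|)$ and whose inhomogeneity is a controlled expression in $Tu$. Testing this linearised equation against cutoffs of the form $\eta^{\beta}(X_l u - k)_{\pm}^{q}$ produces Caccioppoli-type inequalities on the superlevel and sublevel sets of $X_l u$, modulo error terms that involve $|Tu|$ and come from the commutators. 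A parallel Caccioppoli for $Tu$ itself is obtained by applying $T$ to the original equation and testing against $\eta^{\beta} Tu$.

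The main obstacle, as in Zhong's original argument, is the vertical derivative $Tu$: it satisfies no self-contained equation and scales worse than $\X u$, so its $L^{G}$ norm cannot be bounded by the horizontal Caccioppoli alone. Following \cite{Zhong, Muk-Zhong}, I would couple the vertical Caccioppoli with a Moser-type higher-integrability iteration to produce an estimate of the form
\begin{equation*}
\int_{B_\rho} G(|Tu|)\, \eta^{N} \dx \,\le\, C \int_{B_{2\rho}} G(|\X u|)\dx,
\end{equation*}
which allows the $|Tu|$ error terms in the horizontal Caccioppoli to be absorbed. This is precisely the step where the hypothesis $\delta > 0$ becomes essential: the two-sided bound \eqref{eq:gG3'} replaces the scaling $g(t) = t^{p-1}$ available in the original Zhong setting, and it is what lets one compare levels of $G$ across dyadic scales inside the iteration.

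Once the clean Caccioppoli inequalities are available, I would apply the Tolksdorf--Lieberman double truncation. Setting $M_r = \sup_{B_r} |\X u|$, this splits the analysis into a non-degenerate regime, where $|\X u|$ is comparable to $M_r$ on a large portion of $B_r$ and the linearised operator becomes uniformly elliptic with constants determined by $\F(M_r)$ so that a De Giorgi iteration on some component $X_l u$ yields a fixed oscillation-decay factor; and a degenerate regime, where $|\X u|$ is small on most of $B_r$ and $G(|\X u|)$ itself controls $\osc X_l u$ via a direct comparison. Combining the two cases produces the geometric decay of $\osc_{B_r} X_l u$, and a final application of Jensen's inequality \eqref{eq:jenap} converts this pointwise oscillation decay into the averaged $G$-Campanato estimate \eqref{eq:holder} with $\sigma$ determined by $\lambda$, $\delta$ and $g_0$.
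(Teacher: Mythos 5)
Your proposal follows essentially the same route as the paper: invoke the Lipschitz bound from \cite{Muk}, differentiate the equation, establish Caccioppoli inequalities for $Tu$ and for $X_l u$, obtain the higher integrability of $Tu$ via iteration, then apply the Tolksdorf--Lieberman double truncation to get a two-case oscillation-decay lemma, and close by iteration plus the Orlicz--Campanato embedding. The key ingredients and citations all match.

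There is one substantive gap worth flagging. You write ``Differentiating the equation $\dvh(\A(\X u))=0$ in a horizontal direction $X_l$ \dots yields a linearised equation for $X_l u$,'' and treat this as immediate once $\X u$ is locally bounded. It is not: to justify that $X_l u$ and $Tu$ solve the differentiated equations one needs $\X u\in HW^{1,2}_\loc$ (and enough continuity to integrate by parts), and this regularity does not follow from the Lipschitz bound alone, since the ellipticity weight $\F(|\X u|)$ may degenerate or blow up near $|\X u|=0$. The paper deals with this by first imposing the auxiliary uniform-ellipticity assumption $(A)$ in \eqref{eq:ass}, which together with Capogna's $C^{1,\alpha}$ and $W^{2,2}$ results makes all subsequent manipulations rigorous, and then removing that assumption at the very end via the explicit regularization $\A_\eps$ in \eqref{eq:regularization} and a compactness/limiting argument. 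Your sketch silently assumes away this passage. Separately, your description of the ``degenerate regime'' in the Tolksdorf--Lieberman case split is slightly off: in the paper's Case~2 the point is not that $|\X u|$ is small, but that for every index $l$ neither level set $\{X_l u>\mu/4\}$ nor $\{X_l u<-\mu/4\}$ can be made small, which forces $\omega(8r)\ge \tfrac34\mu(8r)$ and so the oscillation already captures most of the sup. This is what closes the recursion in that branch, not a direct $G(|\X u|)$-vs-$\osc$ comparison.
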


\subsection{Previous Results}\label{subsec:Previous Results}\noindent
\\
Here we provide some results that are known and previously obtained, which would be essential for our purpose. For more details, 
we refer to \cite{Muk0} and references therein. 

The following monotonicity and ellipticity inequalities follow 
easily from \eqref{eq:str cond diff}. 
\begin{align}
\label{eq:monotone}
&(1)\ \inp{\A(z)-\A(w)}{z-w}\geq c(g_0) 
\begin{cases}
  |z-w|^2 \,\F(|z|)\ &\text{if}\ |z-w|\leq 2|z| \\
 |z-w|^2 \,\F(|z-w|) \ \ &\text{if}\ |z-w|> 2|z|
\end{cases}\\ 
 \label{eq:elliptic}
&(2)\ \inp{\A(z)}{z}\geq c(g_0)\,|z|^2\F(|z|) \geq c(g_0)\, G(|z|)
\end{align}
for all $z,w\in\R^{2n}$ and some constant $c(g_0)>0$. These are essential to show the existence of a weak solution 
$u\in HW^{1,G}(\Om)$ of the equation \eqref{eq:maineq}. 
We refer to \cite{Muk0} for a brief discussion on existence and uniqueness 
for \eqref{eq:maineq}. The following theorem is Theorem 1.1 of 
\cite{Muk0}, which shows the local Lipschitz continuity of the weak solutions. 
\begin{Thm}\label{thm:Lip}
 Let $ u \in \hw{1}{G}(\Om)$ be a weak solution of equation \eqref{eq:maineq}
 satisfying structure condition \eqref{eq:str cond diff} and $g$ satisfies \eqref{eq:g prop} with $\delta>0$. Then 
 $\X u \in L^\infty_\loc(\Om, \R^{2n})$; moreover for any $B_r \subset \Om $, 
 we have 
 \begin{equation}\label{eq:locbound}
  \sup_{B_{\sigma r}}\ G(|\X u|)\leq \frac{c}{(1-\sigma)^Q}\intav_{B_r}G(|\X u|)\dx 
 \end{equation}
 for any $0<\sigma<1$, where $c = c(n,g_0,\delta,L) > 0 $ is a constant. 
\end{Thm}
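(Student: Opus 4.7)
The plan is to adapt the gradient H\"older continuity scheme of \cite{Muk-Zhong} for the $p$-Laplacian on the Heisenberg group to the present $g$-growth setting. The starting point is Theorem~\ref{thm:Lip}, which furnishes $M := \sup_{B_{r_0/2}}|\X u|<\infty$. With the bound in hand, the coefficients $D\A(\X u)$ become bounded, symmetric, and anisotropically elliptic with weight $\F(|\X u|)$, so one may formally differentiate \eqref{eq:maineq} along $X_k$ and along $T$ to produce linearized equations for $X_k u$ and $Tu$ respectively.

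The central obstruction, as always on $\hh^n$, is that the commutator relation $[X_i,X_{n+i}]=T$ causes the linearized equation for a horizontal derivative $v=X_k u$ to carry an inhomogeneous term built from $Tu$. I would handle this by running two families of Caccioppoli-type inequalities in parallel: (a) testing the $X_k$-differentiated equation by $\eta^2(X_k u-\kappa)_{\pm}$ with weights dictated by $\F$ yields an energy estimate for $X_k u$ in which $Tu$ appears on the right; (b) testing the $T$-differentiated equation by a cutoff times $Tu$ produces a reverse-type Caccioppoli bound for $Tu$ by the weighted horizontal Hessian. Feeding (b) into (a) and invoking the Zhong integrability estimate (the same device underpinning \cite{Muk}, originally from \cite{Zhong}) yields a quantitative control of $Tu$ by $\X u$ on a slightly larger ball, closing the loop and producing genuine Caccioppoli inequalities for the horizontal derivatives with right hand sides depending only on $G(|\X u|)$.

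Once these estimates are in place, the oscillation decay of $X_l u$ is obtained by a dichotomy on dyadic balls $B_\rho\subset B_{r_0/2}$. Set $M_\rho=\sup_{B_\rho}|\X u|$. In the non-degenerate regime, where $|\X u|\geq M_\rho/4$ on a substantial part of $B_{\rho/2}$, the equation is uniformly elliptic at scale $\F(M_\rho)$, and the double-truncation trick of \cite{Tolk,Lieb--bound} together with a De Giorgi--Nash--Moser iteration on $(X_l u-k)_{\pm}$ yields a fixed reduction of $\osc_{B_{\rho/2}} X_l u$. In the degenerate regime, the Zhong-type argument on level sets of $|\X u|$ combined with the Caccioppoli inequality for $X_l u$ produces a comparable decay of $\sup_{B_{\rho/2}} G(|X_l u|)$. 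Iterating the alternative delivers $\osc_{B_r}(X_l u)\leq c(r/\rho)^{\sigma}(M_\rho+\text{tail})$ with $\sigma\in(0,1)$ depending only on $n,g_0,L$, as predicted in the statement.

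The final Campanato estimate \eqref{eq:holder} then follows from the oscillation decay by a standard chain between dyadic balls, using \eqref{eq:jenap} to replace the minimizing constant by the mean $\{X_l u\}_{B_r}$ and \eqref{eq:gG3'} (where the hypothesis $\delta>0$ enters) to propagate the $G$-average from the small ball back to $B_{r_0}$; this is the step in which the constant picks up its dependence on $\delta$. The main obstacle is the coupling step: one must verify that the $Tu$ contribution created by differentiating the equation is absorbed even in the degenerate regime, where $\F(|\X u|)$ may vanish, and this requires both Zhong's integrability bound and a truncation calibrated to the doubling constants of $g$ furnished by \eqref{eq:g prop}.
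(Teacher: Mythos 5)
Your proposal is circular and aimed at the wrong statement. You write that ``the starting point is Theorem~\ref{thm:Lip}, which furnishes $M := \sup_{B_{r_0/2}}|\X u|<\infty$'' --- but Theorem~\ref{thm:Lip} is precisely what you are being asked to prove, so it cannot be invoked as a hypothesis. Everything that follows in your sketch (the linearized equations for $X_l u$ and $Tu$, the coupling via Zhong's integrability estimate, the double truncation and dyadic dichotomy, and the concluding Campanato step) is the argument for gradient H\"older continuity, Theorem~\ref{thm:holder}; indeed you close with inequality~\eqref{eq:holder}, which is the conclusion of Theorem~\ref{thm:holder}, not the $L^\infty$ bound~\eqref{eq:locbound}. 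The logical order in the paper is the reverse of yours: Theorem~\ref{thm:Lip} is a prerequisite for Theorem~\ref{thm:holder}, because the weight $\F(|\X u|)$ in the linearized Caccioppoli inequalities and the double-truncation argument can only be trapped in a compact range once $\X u$ is already known to be locally bounded.

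What is actually required is a direct argument for~\eqref{eq:locbound}. In the paper this theorem is not proved but cited verbatim as Theorem~1.1 of~\cite{Muk}. The route there is qualitatively different from your oscillation-decay scheme: one works under the regularization~\eqref{eq:ass} so that the equation is uniformly elliptic and $\X u$, $Tu$ have the a priori regularity~\eqref{eq:Cap reg}; one derives weighted Caccioppoli inequalities for $\X u$ from the differentiated equations \eqref{eq:Xu} and \eqref{eq:Xu other}, absorbs the $Tu$-terms using Lemma~\ref{caccioppoli:T} and the higher-integrability estimate of Lemma~\ref{cor:Tu:high}, and then runs a Moser iteration directly on powers of $G(|\X u|)$, using \eqref{eq:gG2'}--\eqref{eq:gG3'} (which is where the hypothesis $\delta>0$ enters) to close the iteration and reach the quantitative supremum bound~\eqref{eq:locbound}, uniformly in the regularization parameter. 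Only once this boundedness is in hand can the dichotomy and De Giorgi class machinery you describe even begin.
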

Now, we also require the following apriori assumption as considered in 
\cite{Muk0}, in order to temporarily remove possible singularities of the function $\F$. Here onwards, this shall be assumed until the end of this section. 
 \begin{equation}\label{eq:ass}
(A):\ \text{There exists}\ m_1,m_2>0 \ \text{such that}
\ \lim_{t \to 0}\F(t)=m_1\ \text{and}\ \lim_{t \to \infty}\F(t)=m_2.
\end{equation}
This combined with the local boundedness of $\X u$ from Theorem \ref{thm:Lip}, makes the equation \eqref{eq:maineq} to be uniformly 
elliptic and enables us to conlcude 
\begin{equation}\label{eq:Cap reg}
\X u \in HW^{1,2}_\loc(\Om,\R^{2n})\cap C^{\,0,\alpha}_\loc(\Om,\R^{2n}) ,
\ \  Tu  \in HW^{1,2}_\loc(\Om)\cap C^{\,0,\alpha}_\loc(\Om) 
\end{equation}
from Theorem 1.1 and Theorem 3.1 of Capogna \cite{Cap--reg}. 
However, every estimates in this section, are independent of the constants $m_1$ and $m_2$ and \eqref{eq:ass} shall be ultimately removed.

The regularity \eqref{eq:Cap reg} is necessary to differentiate the 
equation \eqref{eq:maineq} and obtain the equations satisfied by 
$X_lu$ and $Tu$, as shown in the following two lemmas. The proofs are 
simple and omitted here, we refer to \cite{Muk0} and \cite{Zhong} for details. 
\begin{Lem}\label{lem: Tu}
If $ u \in HW^{1,G}(\Om) $ is a weak solution of \eqref{eq:maineq}, 
then $Tu$ is a weak solution of 
\begin{equation}\label{eq:Tu}
\sum_{i,j=1}^{2n}X_i(D_j\A_i(\X u)X_j(Tu))=0.
\end{equation}
\end{Lem}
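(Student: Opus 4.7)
My plan is to differentiate the weak equation formally in the vertical direction $T$, exploiting the fact that on $\hh^n$ the vertical vector field commutes with each horizontal one. Concretely, the canonical commutation relations \eqref{eq:comm} give $[X_i,T]=0$ for every $1\leq i\leq 2n$, and since $T=\partial_t$ has zero Euclidean divergence, integration by parts yields $T^* = -T$ against Lebesgue measure. These two facts will do essentially all the work.

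First I would record the weak formulation: for every $\varphi\in C^\infty_0(\Om)$,
\begin{equation*}
\int_\Om \sum_{i=1}^{2n}\A_i(\X u)\,X_i\varphi\dx = 0.
\end{equation*}
Next, for an arbitrary $\psi\in C^\infty_0(\Om)$, I would insert $\varphi=-T\psi$, which is admissible since $T\psi$ is again smooth and compactly supported. Using $X_iT=TX_i$ to rewrite $X_i(T\psi)=T(X_i\psi)$, and then integrating by parts in $T$ (using $T^*=-T$) moves $T$ onto $\A_i(\X u)$, producing
\begin{equation*}
\int_\Om \sum_{i=1}^{2n} T\bigl(\A_i(\X u)\bigr)\,X_i\psi\dx = 0.
\end{equation*}

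The final step is the chain rule: since the a priori assumption \eqref{eq:ass} together with Theorem \ref{thm:Lip} yield the improved regularity \eqref{eq:Cap reg}, we have $\X u\in HW^{1,2}_\loc(\Om,\R^{2n})$ and $Tu\in HW^{1,2}_\loc(\Om)$, so $\A_i\in C^1$ applied componentwise gives
\begin{equation*}
T\bigl(\A_i(\X u)\bigr)\,=\,\sum_{j=1}^{2n}D_j\A_i(\X u)\,T(X_ju)\,=\,\sum_{j=1}^{2n}D_j\A_i(\X u)\,X_j(Tu),
\end{equation*}
where in the last equality I again use $[X_j,T]=0$. Substituting back yields
\begin{equation*}
\int_\Om\sum_{i,j=1}^{2n} D_j\A_i(\X u)\,X_j(Tu)\,X_i\psi\dx\,=\,0\qquad\forall\,\psi\in C^\infty_0(\Om),
\end{equation*}
which is precisely the weak form of equation \eqref{eq:Tu}.

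There is no real obstacle here, only bookkeeping: the sole nontrivial point is that the chain rule step must be justified distributionally, and this is exactly what the apriori assumption (A) buys us via \eqref{eq:Cap reg}. Once that regularity is in hand the computation is routine, and the argument is entirely parallel to the classical Euclidean differentiation of quasilinear equations, with the Heisenberg commutator $[X_i,T]=0$ replacing the trivial commutation of partials.
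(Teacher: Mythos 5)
Your proof is correct and is the standard argument that the paper defers to in \cite{Muk} and \cite{Zhong}: substitute $\varphi=-T\psi$, commute $T$ past $X_i$ via $[X_i,T]=0$, integrate by parts in the vertical direction, and apply the chain rule, with each step justified by the a priori regularity \eqref{eq:Cap reg}. The only point worth underlining is that both the integration by parts and the distributional chain rule rest on $T(X_ju)=X_j(Tu)\in L^2_\loc$, which you correctly draw from \eqref{eq:Cap reg}.
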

\begin{Lem}\label{lem:Xu}
If $ u \in HW^{1,G}(\Om) $ is a weak solution of \eqref{eq:maineq}, then
for any $ l \in \{1,\ldots, n\}$, we have that $ X_lu$ is weak solution of 
\begin{equation}\label{eq:Xu}
\sum_{i,j=1}^{2n} X_i(D_j\A_i(\X u)X_jX_lu)
+\sum_{i=1}^{2n}X_i(D_i\A_{n+l}(\X u)Tu) +T(\A_{n+l}(\X u))= 0
\end{equation}
and similarly, $X_{n+l}u $ is weak solution of
\begin{equation}\label{eq:Xu other}
\sum_{i,j=1}^{2n} X_i(D_j\A_i(\X u)X_jX_{n+l}u)
-\sum_{i=1}^{2n}X_i(D_i\A_{l}(\X u)Tu) -T(\A_{l}(\X u))= 0.
\end{equation}
\end{Lem}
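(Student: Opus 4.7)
The plan is to derive the equations \eqref{eq:Xu} and \eqref{eq:Xu other} by differentiating the weak formulation of \eqref{eq:maineq} in the directions $X_l$ and $X_{n+l}$, respectively, carefully tracking the non-commutativity of the horizontal vector fields via \eqref{eq:comm}. The a priori assumption $(A)$ together with Theorem \ref{thm:Lip} puts us in the uniformly elliptic regime, so by \eqref{eq:Cap reg} the second horizontal derivatives $X_j X_l u$ and the vertical derivative $Tu$ belong to $L^2_\loc(\Om)$, and hence every integration by parts and chain-rule manipulation below is legitimate.

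Fix $\varphi \in C^\infty_0(\Om)$ and plug $-X_l\varphi$ (which is again compactly supported smooth) into the weak form $\sum_i \int \A_i(\X u)\,X_i\psi\,dx = 0$. This gives $\sum_i \int \A_i(\X u)\,X_iX_l\varphi\,dx = 0$. Writing $X_iX_l = X_lX_i + [X_i,X_l]$ and using \eqref{eq:comm} (only the index $i = n+l$ produces a nontrivial commutator, with $[X_{n+l},X_l]=-T$), I would rewrite this as
\[
\sum_i \int \A_i(\X u)\,X_lX_i\varphi\,dx \;-\; \int \A_{n+l}(\X u)\,T\varphi\,dx \;=\; 0.
\]
Integrating by parts in the first term — using that both $X_l$ and $T$ are formally skew-adjoint with respect to Lebesgue measure because the left-invariant vector fields on $\hh^n$ are divergence-free — yields
\[
-\sum_i \int X_l\!\left(\A_i(\X u)\right) X_i \varphi\,dx \;-\; \int \A_{n+l}(\X u)\,T\varphi\,dx \;=\; 0.
\]

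Next I would apply the chain rule $X_l(\A_i(\X u)) = \sum_j D_j\A_i(\X u)\,X_lX_j u$, and again commute $X_lX_j = X_jX_l + [X_l,X_j]$; this time the only surviving commutator arises at $j=n+l$, contributing an extra term $D_{n+l}\A_i(\X u)\,Tu$. Using the assumed symmetry of $D\A$ to replace $D_{n+l}\A_i$ by $D_i\A_{n+l}$ and collecting terms produces exactly the weak form of \eqref{eq:Xu}. The equation \eqref{eq:Xu other} for $X_{n+l}u$ is obtained by the identical argument with $X_l$ replaced by $X_{n+l}$; the sign flip between the two displayed equations is a direct reflection of $[X_l,X_{n+l}] = T = -[X_{n+l},X_l]$.

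The only real difficulty is bookkeeping: two commutator corrections appear, one at the level of the test function (when we commute $X_iX_l\varphi$) and one at the level of $\X u$ (when we commute $X_lX_ju$), and they must be combined correctly using the symmetry of $D\A$. There is no analytic obstacle — the needed second-order regularity is precisely what \eqref{eq:Cap reg} provides — so the proof reduces to a careful algebraic computation in the Heisenberg Lie algebra.
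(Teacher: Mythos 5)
Your proof is correct and coincides with the standard argument that the paper omits and outsources to \cite{Muk} and \cite{Zhong}: test the weak formulation of \eqref{eq:maineq} against $X_l\varphi$, integrate by parts (legitimate because \eqref{eq:Cap reg} gives $\X u\in HW^{1,2}_\loc$ and $Tu\in L^2_\loc$ under assumption (A)), apply the chain rule, and use the commutation relations \eqref{eq:comm} together with the symmetry of $D\A$ to sort the two commutator corrections into the $Tu$-term and the $T(\A_{n+l}(\X u))$-term. Your account of where the sign difference between \eqref{eq:Xu} and \eqref{eq:Xu other} comes from is also correct.
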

We enlist some Caccioppoli type inequalitites, that are very similar to those in 
\cite{Zhong} and \cite{Muk-Zhong}. They will be essential for the estimates in 
the next subsection. 

The following lemma is similar to Lemma 3.3 in \cite{Zhong}, the proof is trivial and omitted here. 
\begin{Lem}\label{caccioppoli:T}
For any $\beta\ge 0$ and  all $\eta\in C^\infty_0(\Omega)$, we
have, for some $c=c(n,g_0,L)>0$, that
\begin{equation*}
\int_\Omega \eta^2\,\weight | Tu|^\beta|\X (Tu)|^2\,
dx \le \frac{c}{(\beta+1)^2}\int_\Omega |\X\eta
|^2\weight| Tu|^{\beta+2}\, dx.
\end{equation*}
\end{Lem}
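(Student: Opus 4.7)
The plan is to exploit the fact that $Tu$ satisfies the linear equation \eqref{eq:Tu} from Lemma \ref{lem: Tu}, namely
\[
\sum_{i,j} X_i\bigl(a_{ij}(x)\, X_j(Tu)\bigr)=0 \quad\text{where}\quad a_{ij}(x):=D_j\A_i(\X u(x)),
\]
and the coefficient matrix $(a_{ij})$ is symmetric and uniformly elliptic with respect to the weight $\F(|\X u|)$, since \eqref{eq:str cond diff} gives
\[
\F(|\X u|)|\xi|^2\le \sum_{i,j}a_{ij}\xi_i\xi_j\le L\,\F(|\X u|)|\xi|^2
\]
for all $\xi\in\R^{2n}$. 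Under the apriori assumption \eqref{eq:ass} and the regularity \eqref{eq:Cap reg}, $Tu\in HW^{1,2}_\loc(\Om)\cap C^{\,0,\alpha}_\loc(\Om)$, so the test function proposed below is admissible.

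Given $\eta\in C^\infty_0(\Om)$ and $\beta\ge 0$, the main step is to use
\[
\varphi \,=\, \eta^2\,|Tu|^\beta\,Tu
\]
as a test function in the weak form of \eqref{eq:Tu}. A direct computation gives
\[
X_i\varphi \,=\, 2\eta\, X_i\eta\,|Tu|^\beta Tu \,+\, (\beta+1)\eta^2|Tu|^\beta X_i(Tu),
\]
so that after integrating by parts one obtains
\[
(\beta+1)\int_\Om \eta^2|Tu|^\beta \sum_{i,j}a_{ij}X_i(Tu)X_j(Tu)\dx
\,=\, -2\int_\Om \eta\,|Tu|^\beta Tu \sum_{i,j}a_{ij}X_i\eta\,X_j(Tu)\dx.
\]

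The next step is to bound the right-hand side via the Cauchy--Schwarz inequality for the symmetric positive form $(a_{ij})$:
\[
\Bigl|\sum_{i,j}a_{ij}X_i\eta\,X_j(Tu)\Bigr|
\,\le\,\Bigl(\sum_{i,j}a_{ij}X_i\eta\,X_j\eta\Bigr)^{1/2}
\Bigl(\sum_{i,j}a_{ij}X_i(Tu)\,X_j(Tu)\Bigr)^{1/2},
\]
then apply Young's inequality with a parameter that matches the factor $(\beta+1)$ from the left-hand side, i.e.\ writing $2AB\le \tfrac{1}{2}(\beta+1) A^2+\tfrac{2}{\beta+1}B^2$. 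This produces a term that can be absorbed into the left-hand side, leaving
\[
(\beta+1)^2\int_\Om \eta^2|Tu|^\beta\,\F(|\X u|)|\X(Tu)|^2\dx
\,\le\, c(L)\int_\Om |\X\eta|^2\,\F(|\X u|)|Tu|^{\beta+2}\dx,
\]
where the upper ellipticity from \eqref{eq:str cond diff} is used on the $\eta$-term and the lower ellipticity on the $\X(Tu)$-term.

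The computation is entirely routine; the only point to be careful about is tracking the $(\beta+1)$-powers through the Young's inequality so that the final constant on the right is of order $1/(\beta+1)^2$ rather than $1/(\beta+1)$. No genuine obstacle arises, as the structure conditions \eqref{eq:str cond diff} and the regularity \eqref{eq:Cap reg} reduce the argument to the standard Caccioppoli estimate for the linearised equation with weight $\F(|\X u|)$, exactly as in \cite{Zhong}.
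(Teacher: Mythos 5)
Your proof is correct and is exactly the standard Caccioppoli-type argument that the paper is alluding to when it says the proof is "trivial and omitted" and "similar to Lemma 3.3 in \cite{Zhong}": test equation \eqref{eq:Tu} with $\varphi=\eta^2|Tu|^\beta Tu$, use the symmetry and two-sided ellipticity of the form $a_{ij}=D_j\A_i(\X u)$ from \eqref{eq:str cond diff}, and absorb via Cauchy--Schwarz and Young's inequality with the parameter chosen of size $(\beta+1)$. The bookkeeping of powers of $(\beta+1)$ is the only non-mechanical point and you handle it correctly, producing $c=4L$ and hence a constant depending only on $n,g_0,L$.
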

The following lemma is similar to Corollary 3.2 of \cite{Zhong} and 
Lemma 2.5 of \cite{Muk-Zhong}. This is crucial for the proof of the H\"older continuity of the horizontal gradient. The proof of the lemma is similar to that in 
\cite{Zhong} and involves few other Caccioppoli type estimates. An outline is 
provided in Appendix II, for the reader's convenience.  
\begin{Lem}\label{cor:Tu:high}
For any $q\geq 4$ and all non-negative $\eta\in C^\infty_0(\Omega)$, we have that
\begin{equation}\label{eq:Tint}
\int_\Omega\eta^{q}\,\weight| Tu|^{q}\, dx \le c(q)K^{q/2}
\int_{\supp(\eta)}\weight |\X u|^{q}\, dx,
\end{equation}
where $K=\| \X\eta\|_{L^\infty}^2+\|\eta
T\eta\|_{L^\infty}$ and  $c(q)=c(n,g_0,L,q)>0$. 
\end{Lem}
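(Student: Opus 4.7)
The plan, following Zhong \cite{Zhong}, is to set up an iteration in the exponent $q$ that trades one factor of $|Tu|$ for one factor of $|\X u|$ at each step while accumulating a factor of $K^{1/2}$. The two pillars are Lemma \ref{caccioppoli:T} applied to $Tu$ and a companion Caccioppoli estimate for $\X u$ weighted by powers of $|Tu|$, bridged by the Heisenberg commutator identity $[X_l,X_{n+l}]=T$, which yields the pointwise bound $|Tu|\leq 2|\XX u|$.

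I would first apply Lemma \ref{caccioppoli:T} with $\beta=q-2$ to obtain
\begin{equation*}
\int_\Om \eta^2\,\weight\, |Tu|^{q-2}|\X(Tu)|^2\dx \leq \frac{c}{q^2}\int_\Om |\X\eta|^2\,\weight\, |Tu|^q\dx,
\end{equation*}
which will be used to absorb every term involving $|\X(Tu)|$ later. Next, test the equations \eqref{eq:Xu} and \eqref{eq:Xu other} for $X_l u$ and $X_{n+l}u$ with a cutoff of the form $\phi\simeq \eta^q |Tu|^{q-2}X_l u$; invoking the ellipticity \eqref{eq:str cond diff} and Young's inequality on the resulting cross-terms produces
\begin{equation*}
\int_\Om \eta^q\,\weight\, |Tu|^{q-2}|\XX u|^2\dx \leq c\,K\int_{\supp\eta} \eta^{q-2}\,\weight\, |Tu|^{q-2}|\X u|^2\dx + (\text{terms absorbed via the previous step}).
\end{equation*}
The pointwise identity $|Tu|^2\leq 4\,|\XX u|^2$ then upgrades the left-hand side, producing the main iteration inequality
\begin{equation*}
\int_\Om \eta^q\,\weight\, |Tu|^q\dx \leq c\,K\int_{\supp\eta} \tilde\eta^{q-2}\,\weight\, |Tu|^{q-2}|\X u|^2\dx
\end{equation*}
for a slightly enlarged cutoff $\tilde\eta$ with $\|\X\tilde\eta\|_\infty^2+\|\tilde\eta T\tilde\eta\|_\infty\leq cK$. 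Finally, one either iterates this $q/2$ times or applies Young's inequality once with conjugate exponents $q/(q-2)$ and $q/2$ on the right and absorbs the resulting $|Tu|^q$ term into the left-hand side, arriving at the desired bound with $K^{q/2}$. The hypothesis $q\geq 4$ ensures $\beta=q-2\geq 2$, so that Lemma \ref{caccioppoli:T} is applicable throughout with a nonnegative exponent.

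The principal technical difficulty lies in the second step above: differentiating $\phi$ in the weak formulation of \eqref{eq:Xu} produces a long list of cross-terms mixing $\XX u$, $Tu$, $\X\eta$ and $T\eta$, and the intrinsic ``horizontal--vertical'' coupling terms $D_i\A_{n+l}(\X u)Tu$ and $T(\A_{n+l}(\X u))$ present in \eqref{eq:Xu} itself must be handled carefully. The Young-type splittings must be calibrated so that every $|\X(Tu)|$ piece is matched with the weight $|Tu|^{q-2}$ required by Lemma \ref{caccioppoli:T}, every $|\XX u|$ piece is compatible with the ellipticity on the left-hand side, and only one factor of $K^{1/2}$ is spent per iteration. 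Grouping $\|\X\eta\|_\infty^2$ and $\|\eta T\eta\|_\infty$ into the single quantity $K$, rather than treating them separately, is exactly what makes this accounting work and reflects the intrinsic anisotropic scaling of commutators on $\hh^n$.
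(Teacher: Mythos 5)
The test function $\phi\simeq\eta^q|Tu|^{q-2}X_lu$ you propose is exactly the one used in the paper's Lemma~\ref{caccioppoli:horizontal:T}, but it does not produce the inequality you write down. Differentiating $\phi$ and applying the ellipticity \eqref{eq:str cond diff} together with Young's inequality — and absorbing the $\X(Tu)$ pieces via Lemma~\ref{caccioppoli:T}, as you correctly anticipate — yields a trade $|Tu|^2\mapsto|\X u|^2$ \emph{while retaining the factor $|\XX u|^2$ on both sides}:
\begin{equation*}
\int_\Om \eta^{q}\,\weight\, |Tu|^{q-2}|\XX u|^2\,dx \;\le\; c(q-1)^2\,\|\X\eta\|_{L^\infty}^2 \int_\Om \eta^{q-2}\,\weight\,|\X u|^2\, |Tu|^{q-4}\,|\XX u|^2\,dx,
\end{equation*}
which is not the bound you claim (you write $|Tu|^{q-2}|\X u|^2$ and no $|\XX u|^2$ on the right). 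The $|\XX u|^2$ is structural: it comes from differentiating the $X_lu$ factor in the test function, and nothing in the ellipticity or in Lemma~\ref{caccioppoli:T} removes it. Consequently, iterating this estimate (Corollary~\ref{cor1}) brings you to $\int\eta^2\weight|\X u|^{q-2}|\XX u|^2\,dx$, not to $\int\weight|\X u|^q\,dx$, and the iteration stalls there.

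Closing the argument requires a second, $|\X u|^\beta$-weighted Caccioppoli estimate that your proposal never invokes, namely Lemma~\ref{caccioppoli:horizontal:sigma},
\begin{equation*}
\int_{\Omega}\eta^2\, \weight |\X u|^\beta|\X\X u|^2\, dx\;\le\;
 cK\int_{\supp\eta} \weight |\X u|^{\beta+2}\, dx
\;+\;c(\beta+1)^4\int_\Omega\eta^2\, \weight |\X u|^\beta| Tu|^2\, dx,
\end{equation*}
which removes the terminal $|\XX u|^2$ at the price of reintroducing a circular $|Tu|^2$ term; this is then controlled by H\"older's inequality, $|Tu|\le 2|\XX u|$, Corollary~\ref{cor1} again, and a Young absorption to bootstrap back to $\int_{\supp\eta}\weight|\X u|^{\beta+2}\,dx$. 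Your alternative shortcut of applying Young once with exponents $q/(q-2)$ and $q/2$ and absorbing $|Tu|^q$ into the left-hand side also fails on its own terms: since $0\le\eta\le 1$, one has $\eta^q\le\eta^{q-2}\le\tilde\eta^{q-2}$ pointwise, so the term $\int\tilde\eta^{q-2}\weight|Tu|^q\,dx$ you would need to absorb dominates $\int\eta^q\weight|Tu|^q\,dx$ rather than being absorbable into it. In summary, the high-level strategy (iterate, trade $|Tu|$ for $|\X u|$, spend $K^{1/2}$ per step) is the right one, but both the key one-step inequality and the closing mechanism are missing.
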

The following corollary follows immediately from 
Lemma \ref{caccioppoli:T} and Lemma
\ref{cor:Tu:high}.
\begin{Cor}\label{cor:TX}
For any $q\ge 4$ and all non-negative $\eta\in C^\infty_0(\Omega)$, we have
\begin{equation*}
\begin{aligned}
\int_\Omega \eta^{q+2}\,\weight |Tu|^{q-2}|\X(Tu)|^2\dx
\leq c(q)K^\frac{q+2}{2}\int_{spt(\eta)}\weight |\X u|^q\dx,
\end{aligned}
\end{equation*}
where $K=\|\X\eta\|_{L^\infty}^2+\|\eta
T\eta\|_{L^\infty}$ and $c(q)=c(n,g_0,L,q)>0$.
\end{Cor}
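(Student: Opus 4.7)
The plan is to derive this corollary as a direct consequence of the two preceding lemmas, with essentially no new work beyond a careful choice of cutoff. The right-hand side of Lemma~\ref{caccioppoli:T} carries a factor $|Tu|^{\beta+2}$ together with $|\X\eta|^2$, whereas the right-hand side of Lemma~\ref{cor:Tu:high} trades $|Tu|^{q}$ for $|\X u|^{q}$ at the cost of a factor $K^{q/2}$. The task is simply to chain these so that the intermediate $|Tu|$-powers line up and the overall $\eta$-weight on the left ends up as $\eta^{q+2}$.

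First I would apply Lemma~\ref{caccioppoli:T} with $\beta = q-2 \geq 2$ and with the cutoff $\eta$ replaced by $\eta^{(q+2)/2}$ (admissible since for $q\geq 4$ this is at least $C^{3}$ and compactly supported whenever $\eta$ is, and in the marginal case one argues by density). This gives
\[
\int_\Omega \eta^{q+2}\,\weight\,|Tu|^{q-2}\,|\X(Tu)|^{2}\dx
\leq \frac{c}{(q-1)^2}\int_\Omega \bigl|\X(\eta^{(q+2)/2})\bigr|^{2}\,\weight\,|Tu|^{q}\dx.
\]
The chain rule yields $|\X(\eta^{(q+2)/2})|^{2} = \bigl(\tfrac{q+2}{2}\bigr)^{2}\eta^{q}|\X\eta|^{2}$, and the pointwise estimate $|\X\eta|^{2}\leq K$ then upgrades the bound to at most
\[
c(q)\,K \int_\Omega \eta^{q}\,\weight\,|Tu|^{q}\dx.
\]

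Next I would feed this remaining integral into Lemma~\ref{cor:Tu:high}, whose hypothesis $q\geq 4$ is precisely what is assumed. That step absorbs an additional $c(q)K^{q/2}$ and replaces $|Tu|^{q}$ by $|\X u|^{q}$ on $\supp(\eta)$. Combining the factor $K$ produced in the previous step with the $K^{q/2}$ generated here yields $K^{(q+2)/2}$, exactly as claimed.

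I do not anticipate any serious obstacle here: both ingredient lemmas are stated immediately above, and the only real decision is the choice of cutoff $\eta^{(q+2)/2}$, which is forced by the requirement that $\tilde{\eta}^{2}$ in Lemma~\ref{caccioppoli:T} promote to $\eta^{q+2}$. The only bookkeeping issue is ensuring the $q$-dependence of the constants from $\tfrac{1}{(q-1)^2}$ and $\bigl(\tfrac{q+2}{2}\bigr)^{2}$ is absorbed into the generic $c(q)$, which is immediate.
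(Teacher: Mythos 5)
Your proof is correct and matches the paper's intended argument: the paper says the corollary "follows immediately" from Lemma~\ref{caccioppoli:T} and Lemma~\ref{cor:Tu:high}, and your chaining — apply the first with $\beta=q-2$ and cutoff $\eta^{(q+2)/2}$, then feed the resulting $\int\eta^q\weight|Tu|^q$ integral into the second — is precisely the natural way to make that precise, with the factor $K\cdot K^{q/2}=K^{(q+2)/2}$ coming out as claimed.
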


\subsection{The truncation argument}\label{subsec:The truncation argument}\noindent
\\
In this subsection, we follow the technique of \cite{Muk-Zhong} and 
prove Caccioppoli type inequalities invovling a double truncation of 
horizontal derivatives. In the setting of Euclidean spaces, similar 
ideas have been implemented previously by Tolksdorff \cite{Tolk} and  Lieberman 
\cite{Lieb--bound}.

Here onwards, throughout this section, we shall denote 
$u\in HW^{1,G}(\Om)$ as a weak solution of \eqref{eq:maineq} and 
equipped with local Lipschitz continuity from Theorem \ref{thm:Lip}, we denote 
\begin{equation}\label{def:mu}
\mu_i(r)={\sup}_{{B_r}}\, |X_i u|, \quad \mu(r)=\max_{1\le i\le 2n}\mu_i(r).
\end{equation}
for a fixed ball $B_r\subset\Om$.

We fix any $l\in \{ 1,2,.., 2n\}$ and consider the following double 
truncation 
\begin{equation}\label{def:v}
v:=\min\big(\mu(r)/8\,,\,\max\,(\mu(r)/4-X_lu,0)\big).
\end{equation}
It is important to note that, from the regularity \eqref{eq:Cap reg}, we have 
\begin{equation}\label{reg:v} \X v\in L^2_\loc(\Omega;\R^{2n}), \quad Tv\in L^2_\loc(\Omega)
\end{equation}
and moreover, letting 
\begin{equation}\label{eq:setE}
E=\{ x\in \Omega: \mu(r)/8< X_lu<\mu(r)/4\},
\end{equation}
we have that 
\begin{equation}\label{vis0}
\X v=\begin{cases}
-\X X_lu \ & \text{a.e. in } E;\\
0 &\text{a.e. in } \Omega\setminus E,
\end{cases}
\quad\text{and}\quad 
T v=\begin{cases}
-T X_l u \ & \text{a.e. in } E;\\
0 &\text{a.e. in } \Omega\setminus E.
\end{cases}
\end{equation}
 The properties of this 
truncation shall be exploited for proving all the following Caccioppoli 
type estimates. In particular, notice that 
\begin{equation}\label{comparable1fst}
\mu(r)/8\le |\Xu|\le (2n)^{1/2}\mu(r) \quad\text{in } E\cap B_r;
\end{equation}
since $\F(t) = g(t)/t$, \eqref{comparable1fst} combined with 
\eqref{eq:gdoub} implies 
\begin{equation}\label{comparable1}
\frac{1}{8^{g_0}(2n)^{1/2}}\F(\mu(r))\le \F(|\Xu|)\le 8(2n)^{g_0/2}\F(\mu(r)) \quad\text{in } E\cap B_r,
\end{equation}
which shall be used several times during the estimates that follow in this 
subsection. 
The main lemma required to prove Theorem \ref{thm:holder}, is the following.
\begin{Lem}\label{lem:main}
Let $v$ be the truncation \eqref{def:v} and $\eta\in C^\infty_0(B_r)$ be a 
non-negative cut-off function such that $0\le \eta\le 1$ in $B_r$, 
$\eta=1$ in $B_{r/2}$ and that $|\X \eta|\le 4/r,\ |\X \X \eta|\le 16n/r^2$. 
Then we have the following Caccioppoli type inequality 
 \begin{equation}\label{eq:mainest}
  \int_{B_r} \eta^{\beta+4}v^{\beta+2}|\X v|^2\, dx \leq c(\beta+2)^2
\frac{| B_r|^{1-1/\gamma}}{r^2}
  \mu(r)^4\Big(\int_{B_r}\eta^{\gamma\beta}v^{\gamma\beta}\, dx\Big)^{1/\gamma}
\end{equation}
for all $\beta\ge 0$ and $\gamma>1$, where $c=c(n,g_0, L,\gamma)>0$ 
is a constant.
\end{Lem}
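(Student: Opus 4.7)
The plan is to imitate the double-truncation Caccioppoli argument of \cite{Muk-Zhong}: test the differentiated equation \eqref{eq:Xu} satisfied by $X_l u$ with a Moser-type function, and exploit that $\X v$ is supported only on the set $E$ of \eqref{eq:setE}. Abbreviating $a_{ij}(x) := D_j\A_i(\X u(x))$ and $b_i(x) := D_i\A_{n+l}(\X u(x))$, I would insert $\varphi = \eta^{\beta+4} v^{\beta+3}$ into the weak form of \eqref{eq:Xu}, transferring $T$ from $\A_{n+l}(\X u)$ onto $\varphi$ by integration by parts, to obtain
\begin{equation*}
\int a_{ij}\, X_j X_l u\, X_i\varphi\, dx + \int b_i\, Tu\, X_i\varphi\, dx = \int \A_{n+l}(\X u)\, T\varphi\, dx.
\end{equation*}
Expanding $X_i\varphi$ and using $X_i v\equiv 0$ outside $E$ together with $X_j X_l u = -X_j v$ on $E$, the $X_i v$-contribution of the first integral collapses to $-(\beta+3)\int_E a_{ij} X_j v X_i v\,\eta^{\beta+4}v^{\beta+2}\,dx$, which by the ellipticity of \eqref{eq:str cond diff} together with the comparability \eqref{comparable1} is bounded below by $c(\beta+3)\F(\mu(r))\int\eta^{\beta+4}v^{\beta+2}|\X v|^2\,dx$.

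Moving this coercive term to the left-hand side, three error pieces remain: (a) the cutoff term $(\beta+4)\int a_{ij}\, X_j X_l u\,\eta^{\beta+3}(X_i\eta)v^{\beta+3}\,dx$; (b) the two contributions from $\int b_i\, Tu\, X_i\varphi\,dx$; and (c) the vertical term $\int \A_{n+l}(\X u)\,T\varphi\,dx$, whose expansion produces $Tv = -X_l(Tu)$ on $E$ via the commutation $[X_l,T]=0$. The terms in (b) and (c) are controlled by the pointwise structure bounds $|b_i|\le L\F(|\X u|)$ and $|\A_{n+l}(\X u)|\le L|\X u|\F(|\X u|)$ together with $|\X u|\le\sqrt{2n}\,\mu(r)$ and the doubling of $g$, so that products of the form $\F(|\X u|)|\X u|^k$ with $k\ge 1$ are bounded by a constant times $\F(\mu(r))\mu(r)^k$. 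The resulting integrals of $|Tu|$ and $|\X(Tu)|$ are then handled by Lemma \ref{caccioppoli:T} and Corollary \ref{cor:TX} applied with exponent $q = \beta+4$ and cutoff adjusted on $B_r$, while a standard Young's inequality absorbs any residual $|\X v|$ term back into the coercive piece.

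The principal difficulty lies in term (a), since $|\X X_l u|$ is not pointwise comparable to $|\X v|$ outside $E$: on $\{X_l u\le\mu(r)/8\}$ one has $\X v\equiv 0$ while $\X X_l u$ need not vanish. The plan is to apply Cauchy--Schwarz for the positive symmetric form $a_{ij}$ with slots $X_j X_l u$ and $X_i\eta$, splitting (a) into a piece of the form $\epsilon(\beta+4)\int a_{ij} X_j X_l u X_i X_l u\,\eta^{\beta+4}v^{\beta+2}\,dx$ and a safe piece $\tfrac{C(\beta+4)}{\epsilon}\int a_{ij} X_i\eta X_j\eta\,\eta^{\beta+2}v^{\beta+4}\,dx$. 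The first piece is tamed by a further Caccioppoli-type identity obtained by testing \eqref{eq:Xu} against $\eta^{\beta+4} v^{\beta+3}(\mu(r)/4 - X_l u)^+$, which after the $E$-reduction recycles the same coercive integral with a controllable multiplicative constant, plus $Tu$-terms already handled in (b) and (c). Once $\epsilon$ is chosen small enough to reabsorb everything into the left-hand side, I divide through by $\F(\mu(r))$, which cancels between the LHS and each RHS contribution by the homogeneity of the structure conditions in $\F(|\X u|)$; then $v\le\mu(r)/8$ trades two extra powers of $v$ for the prefactor $\mu(r)^2$, and a final application of H\"older's inequality with exponent $\gamma$ converts the residual $\int\eta^\beta v^{\beta+2}\,dx$ into $|B_r|^{1-1/\gamma}\bigl(\int\eta^{\gamma\beta}v^{\gamma\beta}\,dx\bigr)^{1/\gamma}$, yielding \eqref{eq:mainest}.
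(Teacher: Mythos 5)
Your opening setup is correct: using $\varphi=\eta^{\beta+4}v^{\beta+3}$ in the weak form of \eqref{eq:Xu} and reading off coercivity from the $X_iv$-term with the comparability \eqref{comparable1} on $E$ is exactly the paper's first step. But there are two genuine gaps downstream.

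\textbf{The cutoff term (a).} The paper's crucial move, which you do not make, is purely algebraic: using $X_jX_l u = X_lX_j u$ for $j\neq n+l$ and $X_{n+l}X_lu=X_lX_{n+l}u-Tu$ together with the \emph{symmetry} of $D\A$, one has
\begin{equation*}
\sum_{i,j}D_j\A_i(\X u)X_jX_luX_i\eta+\sum_i D_i\A_{n+l}(\X u)\,TuX_i\eta
=\sum_i X_l\big(\A_i(\X u)\big)X_i\eta,
\end{equation*}
so your term (a) combines with the $X_i\eta$-part of your term (b), and a further integration by parts in $X_l$ transfers the derivative off $\A_i(\X u)$, producing only $|\X\eta|^2,\ \eta|\X\X\eta|,\ |\X\eta||\X v|$ factors and \emph{no} $\X\X u$ at all (that is the paper's $I_1$). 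Your replacement plan does not close. The Cauchy--Schwarz split of (a) produces $\epsilon(\beta+4)\int a_{ij}X_jX_luX_iX_lu\,\eta^{\beta+4}v^{\beta+2}\,dx$ supported on all of $\{v>0\}$, and on $\{X_lu\le\mu(r)/8\}$ the weight $v^{\beta+2}$ is the constant $(\mu(r)/8)^{\beta+2}$, so whatever Caccioppoli bound you use to control it there carries no smallness from $v$; the resulting error scales like $\mu(r)^{\beta+4}|B_r|/r^2$ rather than like $\mu(r)^4|B_r|^{1-1/\gamma}r^{-2}\big(\int\eta^{\gamma\beta}v^{\gamma\beta}\big)^{1/\gamma}$, so you lose exactly the dependence on the measure of $\{v>0\}$ that \eqref{eq:mainest} is designed to capture and that drives Corollary \ref{prop:case1}. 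Your proposed secondary test function $\eta^{\beta+4}v^{\beta+3}(\mu(r)/4-X_lu)^+$ does not rescue this: it generates a new cutoff error of the same type but with the pointwise \emph{larger} factor $(\mu(r)/4-X_lu)^+\ge v$ replacing one power of $v$, so the ``absorption'' never converges.

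\textbf{The $\X(Tu)$ term in (c).} After $Tv=-X_lTu$ on $E$, the dangerous piece is (in the paper's notation) $I_3^2\le c(\beta+2)\int_E\eta^{\beta+4}v^{\beta+2}\weight|\X u||\X(Tu)|\,dx$. Controlling this is not ``a standard Young's inequality'' plus Corollary \ref{cor:TX}: a direct Cauchy--Schwarz against $\int\eta^{\beta+4}v^{\beta+2}\weight|\X(Tu)|^2$ again flattens $v^{\beta+2}$ to $\mu(r)^{\beta+2}$ and loses the required $v$-dependence. The paper instead needs the entire technical Lemma \ref{lem:tech}: one tests \eqref{eq:Tu} with $\eta^{\tau(\beta+2)+4}v^{\tau(\beta+4)}|\X u|^4\,Tu$, derives a self-referential estimate $M\le c\tilde K^{1/2}I^{1/2}$, interpolates $\tilde K\le M^{(2\tau-1)/\tau}H^{(1-\tau)/\tau}$, and controls $H$ by Corollary \ref{cor:TX}; the net result bounds $M$ by $c\,\F(\mu(r))\mu(r)^6\,|B_r|^{1-\tau}r^{-2(2-\tau)}J^\tau$ where $J$ is exactly the sum of the left- and right-hand sides of \eqref{eq:mainest}, which is what lets Young's inequality absorb into the coercive term. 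That page of computation is the heart of this lemma, and your sketch does not anticipate it.
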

In the setting of equations with $p$-laplace type growth, the above lemma has been shown previously in \cite{Muk-Zhong}(see Lemma 1.1). The proof is going to be similar. Hence, we would require two auxillary lemmas, similarly as in 
\cite{Muk-Zhong}. 

We also remark that the inequality \eqref{eq:mainest} also holds corresponding to 
the truncation 
\[ v^\prime=\min\big(\mu(r)/8, \max(\mu(r)/4+X_lu,0)\big),\] 
and the proof can be carried out 
in the same way as that of Lemma \ref{lem:main}.

The following lemma is the analogue of Lemma 3.1 of \cite{Muk-Zhong}. The proof is similar and lengthy, which we provide in the 
Appendix I. 
\begin{Lem}\label{lem:start}
For any $\beta\ge 0$ and all non-negative $\eta\in C^\infty_0(\Omega)$, we have that
\begin{equation}\label{ineq1}
\begin{aligned}
\int_\Omega \eta^{\beta+2}v^{\beta+2}&\weight |\X u|^2|\X\X u|^2\, dx\\
\le & \ c (\beta+2)^2\int_\Omega \eta^\beta \big(|\X \eta|^2+\eta| T\eta|\big)v^{\beta+2}
\weight |\X u|^4 \, dx\\
& +  c(\beta+2)^2\int_\Omega \eta^{\beta+2} v^ \beta\weight |\X u|^4| \X v|^2\, dx\\
& +
c\int_\Omega\eta^{\beta+2} v^{\beta+2}\weight |\X u|^2| Tu|^2\, dx,
\end{aligned}
\end{equation}
where $v$ is as in \eqref{def:v} and $c=c(n,g_0,L)>0$.
\end{Lem}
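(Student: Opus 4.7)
The strategy is to test the equations of Lemma \ref{lem:Xu} for the horizontal derivatives $X_l u$ against the admissible test function
\[
\varphi_l \,=\, \eta^{\beta+2}\,v^{\beta+2}\,|\X u|^2\,X_l u,
\]
which belongs to $HW^{1,2}_0(\Om)$ thanks to the regularity \eqref{eq:Cap reg} and the compact support of $\eta$, and then sum over $l=1,\ldots,2n$. The dominant component of $X_i\varphi_l$ is $\eta^{\beta+2}v^{\beta+2}|\X u|^2 X_iX_l u$; paired with $D_j\A_i(\X u)X_jX_l u$ in the principal term of the equation and summed over $l$, the ellipticity $\inp{D\A(\X u)\xi}{\xi}\geq \weight|\xi|^2$ from \eqref{eq:str cond diff} produces precisely the left-hand side of \eqref{ineq1}.

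The remaining horizontal pieces of $X_i\varphi_l$ are handled by Young's inequality. The derivative of $\eta^{\beta+2}$ produces a term bounded (via Cauchy--Schwarz on the positive form $D\A$) by $L\weight\,|\X X_l u|\,|\X\eta|\cdot(\beta+2)\eta^{\beta+1}v^{\beta+2}|\X u|^2|X_l u|$; Young's inequality absorbs the $|\X X_l u|$ factor into the LHS and, after using $\sum_l|X_l u|^2=|\X u|^2$, leaves the $|\X\eta|^2$ portion of the first RHS term. The $v^{\beta+2}$ derivative is treated identically and produces the $|\X v|^2$ term. The third piece, coming from $X_i(|\X u|^2)=2\sum_k X_ku\,X_iX_ku$, collects after summation into $2\int\eta^{\beta+2}v^{\beta+2}\inp{(D\A)Y}{Y}\dx$ with $Y_i=\tfrac12 X_i(|\X u|^2)$; this is nonnegative by ellipticity and may simply be discarded from the LHS.

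The main obstacle is the vertical part of the equation, namely the $\sum_i X_i(D_i\A_{n+l}Tu)$ and $T(\A_{n+l})$ terms in \eqref{eq:Xu} (and their analogues in \eqref{eq:Xu other}). The first, after moving $X_i$ onto $\varphi_l$, is controlled using $|D_i\A_{n+l}|\leq L\weight$ and Young on $\weight|Tu|\,|\X X_l u|$, yielding the $\weight|\X u|^2|Tu|^2$ RHS term. The second is integrated by parts in $T$ to give $-\int\A_{n+l}\,T\varphi_l\dx$; exploiting $[T,X_l]=0$, the leading piece $\eta^{\beta+2}v^{\beta+2}|\X u|^2 X_l(Tu)$ of $T\varphi_l$ is integrated by parts once more in $X_l$, converting $X_l(Tu)$ into a $Tu$-paired expression. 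The resulting $X_l\A_{n+l}=\sum_k D_k\A_{n+l}X_lX_k u$ factor, bounded by $L\weight|\X\X u|$, is handled by Young and contributes again to the $|Tu|^2$ RHS. The subdominant $T\eta$ piece of $T\varphi_l$, combined with $|\A_{n+l}|\leq L|\X u|\weight$ from \eqref{eq:str cond diff}, gives the $\eta|T\eta|$ half of the first RHS term. Choosing the Young parameters small enough to absorb all $|\X X_l u|$-factors back into the LHS completes the proof of \eqref{ineq1}.
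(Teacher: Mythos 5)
Your opening moves match the paper exactly: the test function $\varphi_l=\eta^{\beta+2}v^{\beta+2}|\X u|^2X_lu$, the summation over $l$ to produce the elliptic left-hand side, the discarding of the nonnegative $\inp{D\A\,Y}{Y}$ piece, the handling of the $\sum_i X_i(D_i\A_{n+l}Tu)$ term by moving $X_i$ onto $\varphi_l$ and Young. Where you diverge from the paper is in the treatment of the final term $\int T(\A_{n+l}(\X u))\varphi_l\,dx$, and that divergence introduces a gap.

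The paper never integrates by parts in the $T$-direction at this point. Instead it writes $T=X_1X_{n+1}-X_{n+1}X_1$ and integrates by parts \emph{horizontally}, obtaining $\int X_1(\A_{n+l})X_{n+1}\varphi_l - X_{n+1}(\A_{n+l})X_1\varphi_l\,dx$; it then splits $\varphi_l=v^{\beta+2}w$ with $w=\eta^{\beta+2}|\X u|^2X_lu$ so that, in all subsequent integrations by parts, the factor $v^{\beta+2}$ is only ever struck by horizontal vector fields, producing $\X v$-type terms that match the right-hand side of \eqref{ineq1}. Your route is to integrate by parts in $T$ directly, yielding $-\int\A_{n+l}\,T\varphi_l\,dx$. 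But $T\varphi_l$ then contains the piece $(\beta+2)\eta^{\beta+2}v^{\beta+1}(Tv)|\X u|^2X_lu$ coming from $T(v^{\beta+2})$, and your sketch never mentions it. This piece is genuinely troublesome: by the chain rule $Tv=\sigma'(X_lu)\,X_l(Tu)$ where $\sigma$ is the double truncation, and $\sigma'(X_lu)$ is a characteristic-function-type weight (it jumps across the level sets defining $E$ in \eqref{eq:setE}), so it is not a Sobolev function. Consequently you cannot integrate by parts once more in $X_l$ to trade $X_l(Tu)$ for $Tu$ as you do for the "leading piece", because $X_l$ applied to the coefficient would produce a $\sigma''$ factor that is a measure, not a function. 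A direct bound is no better: estimating $|Tv|\le|\X(Tu)|$ leaves an integral of the form $\int\eta^{\beta+2}v^{\beta+1}\weight|\X u|^4|\X(Tu)|\,dx$, and after Young's inequality you obtain a $\weight|\X(Tu)|^2$-integral which does not appear on the right-hand side of \eqref{ineq1} and cannot be absorbed into the left-hand side. Attempting to integrate the $Tv$-piece by parts in $T$ again just returns you to $I_4^l$ plus the $\int\A_{n+l}v^{\beta+2}Tw\,dx$ term; the identity is tautological. This is precisely why the paper uses the commutator decomposition of $T$: it guarantees that the truncation $v$ only accumulates horizontal derivatives, which is what makes the Caccioppoli estimate close.

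A secondary, more cosmetic omission: your sketch also drops the $T(|\X u|^2)=2\sum_kX_ku\,X_k(Tu)$ piece of $T\varphi_l$. That one is harmless (each $X_k(Tu)$ can be integrated by parts in $X_k$, producing $X_kv$ via $X_k(v^{\beta+2})$, which is fine), but it should be accounted for. The $Tv$ piece, however, is the real obstruction, and fixing it essentially forces you back to the commutator argument the paper uses.
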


Throughout the rest of this 
subsection, we fix a ball $B_r\subset\Omega$ and a cut-off function 
$\eta\in C^\infty_0(B_r)$ that satisfies 
\begin{equation}\label{etaBr1}
0\le \eta\le 1 \quad \text{in }B_r, \quad
\eta=1 \quad \text{in }B_{r/2}
\end{equation} 
and  
\begin{equation}\label{etaBr2} 
|\X \eta|\le 4/r,\quad |\X \X \eta|\le 16n/r^2, \quad | T\eta|\le 32n/r^2\quad \text{in }B_r.
\end{equation}

The following technical lemma, that is 
required for the proof of Lemma \ref{lem:main}, 
is a weighted Caccioppoli inequality for $Tu$ involving $v$ similar 
to that in Lemma 3.2 of \cite{Muk-Zhong}. We provide the proof here for sake 
of completeness.
\begin{Lem}\label{lem:tech}
Let $B_r \subset \Om$ be a ball and $\eta\in C^\infty_0(B_r)$ be a cut-off function satisfying \eqref{etaBr1} and \eqref{etaBr2}. 
Let $\tau \in (1/2,1)$ and $\gamma\in(1,2)$ be two fixed numbers. 
Then, for any $\beta \geq0$, 
we have the following estimate,   
\begin{equation}\label{eq:tech}
 \begin{aligned}
  \int_\Omega \eta^{\tau(\beta+2)+4}\,v^{\tau(\beta+4)} 
\weight |\X u|^4|\X (Tu)|^2\, dx
 \,\leq c(\beta+2)^{2\tau}\frac{|B_r|^{1-\tau}}{r^{2(2-\tau)}}\F(\mu(r))
 \mu(r)^6
 \,J^{\tau},
 \end{aligned}
\end{equation}
where $c = c(n,g_0,L,\tau,\gamma)>0$ and 
\begin{equation}\label{def:J}
J= \int_{B_r} \eta^{\beta+4}v^{\beta+2} |\X v|^2\dx 
 \ +\ \mu(r)^4\frac{|B_r|^{1-\frac{1}{\gamma}}}{r^2}
\Big(\int_{B_r} \eta^{\gamma\beta}v^{\gamma\beta}\dx\Big)^\frac{1}{\gamma}.
\end{equation}
\end{Lem}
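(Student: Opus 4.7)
The plan is to test the equation \eqref{eq:Tu} for $Tu$ against a carefully weighted test function to get a Caccioppoli-type bound on a weighted integral of $|\X(Tu)|^{2}$, and then to use Hölder's inequality with exponents $1/\tau$ and $1/(1-\tau)$ to interpolate between $J$ and an integral amenable to Corollary \ref{cor:TX}. Morally, the factor $\F(\mu(r))\mu(r)^{6}$ on the right-hand side of \eqref{eq:tech} will decompose as $\F(\mu(r))\mu(r)^{4}\cdot\mu(r)^{2}$: the $\mu(r)^{4}$ comes from the pointwise bound $\F(|\X u|)|\X u|^{4}\le c\,\F(\mu(r))\mu(r)^{4}$ on $\supp v$ (which follows from $|\X u|\le\sqrt{2n}\mu(r)$ there together with monotonicity of $g$ and \eqref{eq:gdoub}), while the remaining $\mu(r)^{2}$ comes from the $L^{q}$-control of $|Tu|$ provided by Lemma \ref{cor:Tu:high}.

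Concretely, I would first decompose the integrand as a product $\Phi_{1}^{\tau}\cdot\Phi_{2}^{1-\tau}$ so that Hölder's inequality yields
\begin{equation*}
\int_\Omega \eta^{\tau(\beta+2)+4}v^{\tau(\beta+4)}\F(|\X u|)|\X u|^{4}|\X(Tu)|^{2}\dx\le\Big(\int_\Omega \Phi_{1}\dx\Big)^{\tau}\Big(\int_\Omega \Phi_{2}\dx\Big)^{1-\tau}.
\end{equation*}
For the first factor, I would test the equation of Lemma \ref{lem: Tu} with $\varphi=\eta^{a}v^{b}Tu$ (with integer exponents $a,b$ determined by the split), use the ellipticity bound from \eqref{eq:str cond diff}, and apply Young's inequality to absorb cross-terms into the main energy, producing a Caccioppoli-type bound of the shape
\begin{equation*}
\int_\Omega \eta^{a}v^{b}\F(|\X u|)|\X(Tu)|^{2}\dx\le c(\beta+2)^{2}\int_\Omega \eta^{a-2}v^{b}|\X\eta|^{2}\F(|\X u|)|Tu|^{2}\dx+c(\beta+2)^{2}\int_\Omega \eta^{a}v^{b-2}|\X v|^{2}\F(|\X u|)|Tu|^{2}\dx.
\end{equation*}
The pointwise bound on $\F(|\X u|)|\X u|^{4}$ on $\supp v$ then extracts the factor $\F(\mu(r))\mu(r)^{4}$, after which a further Hölder with exponent $\gamma$ (to introduce the $(\int \eta^{\gamma\beta}v^{\gamma\beta}\dx)^{1/\gamma}$ piece of $J$) combined with Lemma \ref{cor:Tu:high} dominates the remaining $|Tu|^{q}$-integrals; the two resulting pieces match the two summands in $J$. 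For the second factor, I would arrange the split so that $\Phi_{2}$ takes the form $\eta^{q+2}\F(|\X u|)|Tu|^{q-2}|\X(Tu)|^{2}$ for a suitable $q\ge 4$; Corollary \ref{cor:TX} then gives $\int\Phi_{2}\dx\le cK^{(q+2)/2}\int_\Omega \F(|\X u|)|\X u|^{q}\dx$, which, using $K\sim 1/r^{2}$ and $|\X u|\le c(n)\mu(r)$ on $\supp\eta$ (via Theorem \ref{thm:Lip}), is bounded by $c\F(\mu(r))\mu(r)^{q}|B_r|/r^{q+2}$. Collecting powers then gives the claimed right-hand side of \eqref{eq:tech}.

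The hard part will be the bookkeeping of all the exponents. The split must be chosen so that (i) the weights on $\eta$ and $v$ add back to $\tau(\beta+2)+4$ and $\tau(\beta+4)$ respectively, (ii) $\Phi_{1}$ reduces, after the Caccioppoli estimate plus one further Hölder with exponent $\gamma$, to exactly $J$, and (iii) $\Phi_{2}$ matches the hypotheses of Corollary \ref{cor:TX}, whose requirement $q\ge 4$ translates into the restriction $\tau\in(1/2,1)$. A further subtlety is that $\F(|\X u|)$ and $|\X u|^{4}$ cannot be separated on $\supp v$ (since $\F(|\X u|)$ alone is not controlled there, because $|\X u|$ may vanish), so the product $\F(|\X u|)|\X u|^{4}$ must stay intact throughout the Hölder split; this dictates precisely how $\Phi_{1}$ and $\Phi_{2}$ share that factor.
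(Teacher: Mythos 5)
Your proposal and the paper's proof share several ingredients — testing the equation for $Tu$, Caccioppoli-type absorption via Young, a further H\"older with exponent $\gamma$ to produce the second summand of $J$, and Corollary~\ref{cor:TX} to control the $|Tu|$-integrals — but the way you organize the H\"older interpolation with exponent $\tau$ has a genuine structural gap.

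You propose to split the integrand of $M=\int \eta^{\tau(\beta+2)+4}v^{\tau(\beta+4)}\F(|\X u|)|\X u|^{4}|\X(Tu)|^{2}\dx$ \emph{pointwise} as $\Phi_{1}^{\tau}\Phi_{2}^{1-\tau}$, with $\Phi_{2}=\eta^{q+2}\F(|\X u|)|Tu|^{q-2}|\X(Tu)|^{2}$ matching the hypotheses of Corollary~\ref{cor:TX}. This is algebraically impossible: the integrand of $M$ contains $|\X(Tu)|^{2}$ but no power of $|Tu|$, so if $\Phi_{2}^{1-\tau}$ carries $|Tu|^{(q-2)(1-\tau)}$ with $q>2$, then $\Phi_{1}^{\tau}$ would have to carry $|Tu|^{-(q-2)(1-\tau)}$, a negative power of a quantity that vanishes. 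Moreover, $\Phi_{2}$ as written has no $|\X u|^{4}$ factor while the integrand does, so the weights do not balance either. In short, the $|Tu|$-dependence needed to invoke Corollary~\ref{cor:TX} is not present in $M$ and cannot be created by H\"older alone.

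The paper resolves this by \emph{first} testing equation~\eqref{eq:Tu} with $\varphi=\eta^{\tau(\beta+2)+4}v^{\tau(\beta+4)}|\X u|^{4}\,Tu$. Because $\varphi$ contains $Tu$, each boundary/cross term $K_{1},K_{2},K_{3}$ in~\eqref{est:Ki} carries a factor $|Tu|\,|\X(Tu)|$, and Cauchy--Schwarz produces $M\leq c\,\tilde K^{1/2}I^{1/2}$ where $\tilde K$ (see~\eqref{def:tilde K}) contains $|Tu|^{2}|\X(Tu)|^{2}$. \emph{Now} the H\"older interpolation with exponents $(2\tau-1)/\tau$ and $(1-\tau)/\tau$ is applied to $\tilde K$ — not to $M$ — splitting the factor $|Tu|^{2}$ between $M$ (which has $|Tu|^{0}$) and $H$ (which has $|Tu|^{2\tau/(1-\tau)}$, exactly the form needed for Corollary~\ref{cor:TX} with $q=2/(1-\tau)$). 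This yields the self-improving inequality $M\leq c\,M^{(2\tau-1)/(2\tau)}H^{(1-\tau)/(2\tau)}I^{1/2}$, which resolves to $M\leq c\,H^{1-\tau}I^{\tau}$. Your outline is missing this self-referential step and, more fundamentally, the realization that the $|Tu|$-weight must be generated through the equation rather than extracted from $M$ by H\"older. The restriction $\tau\in(1/2,1)$ enters both through $q=2/(1-\tau)\geq 4$, as you noted, and through the requirement that the interpolation exponent $(2\tau-1)/\tau$ be nonnegative.

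As a secondary issue: after your proposed Caccioppoli bound for $\int\Phi_{1}$, the right-hand side carries $\F(|\X u|)|Tu|^{2}$ but no $|\X u|^{4}$, so the claimed pointwise bound $\F(|\X u|)|\X u|^{4}\leq c\,\F(\mu(r))\mu(r)^{4}$ does not apply to it. In the paper this is not a problem because the test function $\varphi$ already carries $|\X u|^{4}$, so all resulting integrals $I$ and $H$ carry $\F(|\X u|)|\X u|^{4}$ together, allowing the pointwise bound to be used consistently (see~\eqref{est:M},~\eqref{I1}--\eqref{I3}).
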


\begin{proof}
We denote the left hand side of \eqref{eq:tech} by $M$, 
\begin{equation}\label{def:K}
 M=\ \int_\Omega \eta^{\tau(\beta+2)+4}\,v^{\tau(\beta+4)} 
\weight |\X u|^4|\X (Tu)|^2\, dx,
\end{equation}
where $1/2<\tau<1$. 
Now we use
$\varphi = \eta^{\tau(\beta+2)+4}\,v^{\tau(\beta+4)}|\X u|^4\, Tu$ 
as a test function for the equation \eqref{eq:Tu}. We obtain that 
\begin{equation}\label{est:Ki}
 \begin{aligned}
 &\int_\Omega \sum_{i,j=1}^{2n}\eta^{\tau(\beta+2)+4}\,v^{\tau(\beta+4)} 
|\X u|^4 D_j\A_i(\X u)X_jTu\,X_iTu\, dx\\
=&-(\tau(\beta+2)+4)\int_\Omega \sum_{i,j=1}^{2n} \eta^{\tau(\beta+2)+3}\,v^{\tau(\beta+4)} 
|\X u|^4 Tu D_j\A_i(\X u)X_jTu\,X_i\eta\dx\\
&-\tau(\beta+4)\int_\Omega \sum_{i,j=1}^{2n}\eta^{\tau(\beta+2)+4}
\,v^{\tau(\beta+4)-1} 
|\X u|^4 Tu D_j\A_i(\X u)X_jTu\,X_i v\dx\\
&-4\int_\Omega \sum_{i,j,k=1}^{2n} \eta^{\tau(\beta+2)+4}\,v^{\tau(\beta+4)} 
|\X u|^2 X_ku Tu D_j\A_i(\X u)X_jTu\,X_iX_ku\dx\\
=&\, K_1 +K_2+ K_3,
\end{aligned}
\end{equation}
where the integrals in the right hand side of \eqref{est:Ki} are denoted by
$K_1,K_2,K_3$ in order. To prove the lemma, 
we estimate both sides of (\ref{est:Ki}) as follows. 

For the left hand side, 
we have by the structure condition \eqref{eq:str cond diff} that
\begin{equation}\label{est:K}
 \text{left of \eqref{est:Ki}} \geq \int_\Omega \eta^{\tau(\beta+2)+4}\,v^{\tau(\beta+4)} 
\weight |\X u|^4|\X (Tu)|^2\, dx = M, 
\end{equation}
and for the right hand side of (\ref{est:Ki}), we estimate each item $K_i, i=1,2,3$, one by one. 

To this end, 
we denote  
\begin{equation}\label{def:tilde K}
\tilde K = \int_\Om \eta^{(2\tau-1)(\beta+2)+6} 
\,v^{(2\tau-1)(\beta+4)} \weight |\X u|^4 |Tu|^2 |\X(Tu)|^2\dx.
\end{equation} 
First, we estimate $K_1$ by the structure condition \eqref{eq:str cond diff} and 
H\"older's inequality, to get
\begin{equation}\label{est:K1}
\begin{aligned}
\vert K_1\vert \leq & c(\beta+2)\int_\Omega \eta^{\tau(\beta+2)+3}\,v^{\tau(\beta+4)} 
\weight |\X u|^4|Tu||\X (Tu)||\X\eta|\, dx\\
\leq & c(\beta+2) \tilde K^\frac{1}{2} 
\Big(\int_\Om \eta^{\beta+2} v^{\beta+4} 
\weight |\X u|^4 |\X\eta|^2\dx\Big)^\frac{1}{2},
\end{aligned}
\end{equation}
where $c = c(n,g_0,L,\tau)>0$. 

Second, we estimate $K_2$ also by the structure condition \eqref{eq:str cond diff} and 
H\"older's inequality,  
\begin{equation}\label{est:K2}
\begin{aligned}
 \vert K_2\vert \leq & c(\beta+2)\int_\Omega \eta^{\tau(\beta+2)+4}\,v^{\tau(\beta+4)-1} 
\weight |\X u|^4|Tu||\X (Tu)||\X v|dx\\
 \leq & c(\beta+2) \tilde K^\frac{1}{2} 
\Big(\int_\Om \eta^{\beta+4} v^{\beta+2} 
\weight |\X u|^4 |\X v|^2\dx\Big)^\frac{1}{2}.
\end{aligned}
\end{equation}

Finally, we estimate $K_3$. In the following, the first inequality
follows from the
structure condition \eqref{eq:str cond diff}, the second from 
H\"older's inequality and the third from Lemma \ref{lem:start}. We have 
\begin{equation}\label{est:K3}
\begin{aligned}
\vert K_3\vert &\leq c\int_\Omega \eta^{\tau(\beta+2)+4}\,v^{\tau(\beta+4)} 
\weight |\X u|^3|Tu||\X (Tu)||\X\X u|\, dx\\
&\leq c \tilde K^\frac{1}{2} 
\Big(\int_\Om \eta^{\beta+4} v^{\beta+4} 
\weight |\X u|^2 |\X\X u|^2\dx\Big)^\frac{1}{2}\\
&\leq c\, \tilde K^\frac{1}{2}\, I^\frac{1}{2}, 
\end{aligned}
\end{equation}
where $I$ is the right hand side of \eqref{ineq1} in Lemma \ref{lem:start}
\begin{equation}\label{def:I}
\begin{aligned}
I=&  \,c (\beta+2)^2\int_\Omega \eta^{\beta+2}
v^{\beta+4}
\weight |\X u|^4\big(|\X \eta|^2+\eta| T\eta|\big)\, dx\\
& +  c(\beta+2)^2\int_\Omega \eta^{\beta+4} v^{\beta+2}\weight |\X u|^4
| \X v|^2\, dx\\
& +
c\int_\Omega\eta^{\beta+4} v^{\beta+4}\weight |\X u|^2 | Tu|^2\, dx. 
\end{aligned}
\end{equation}
where $c= c(n,g_0,L)>0$.  
Notice that the integrals on the right hand side of \eqref{est:K1} and \eqref{est:K2} are both controlled from above by $I$. 
Hence, we can combine \eqref{est:K1}, \eqref{est:K2} and \eqref{est:K3} to obtain 
\[\vert K_1\vert+\vert K_2\vert+\vert K_3\vert \leq\ c \tilde K^\frac{1}{2} I^\frac{1}{2}, 
\]
from which,  together with the estimate \eqref{est:K} for the left hand side of (\ref{est:Ki}), it follows that
\begin{equation}\label{est:K'}
M\le c {\tilde K}^\frac{1}{2} I^\frac{1}{2},
\end{equation}
where $c = c(n,g_0,L,\tau)>0$. 
Now, we estimate $\tilde K$ by H\"older's inequality as follows. 
\begin{equation}\label{est:tilde K}
 \begin{aligned}
  \tilde K\ \leq & \Big(\int_\Omega\eta^{\tau(\beta+2)+4}\,v^{\tau(\beta+4)} 
\weight |\X u|^4|\X (Tu)|^2\, dx\Big)^{\frac{2\tau-1}{\tau}}\\
& \times\Big(\int_\Omega\eta^{\frac{2\tau}{1-\tau}+4}
\weight |\X u|^4 |Tu|^{\frac{2\tau}{1-\tau}}
|\X(Tu)|^2\dx\Big)^\frac{1-\tau}{\tau}\\
=&\, M^{\frac{2\tau-1}{\tau}} H^\frac{1-\tau}{\tau}, 
 \end{aligned}
\end{equation}
where $M$ is as in \eqref{def:K} and we denote by $H$ the second integral on the right hand side of (\ref{est:tilde K})
\begin{equation}\label{def:M}
 H=\int_\Omega\eta^{\frac{2\tau}{1-\tau}+4}
\weight |\X u|^4 |Tu|^{\frac{2\tau}{1-\tau}}
|\X(Tu)|^2\dx.
\end{equation}
Combining \eqref{est:tilde K} and \eqref{est:K'}, we get
\begin{equation}\label{est:K''}
 M \leq c H^{1-\tau} I^{\tau},
\end{equation}
for some $c = c(n,g_0,L,\tau)>0$. 
To estimate $M$, we estimate $H$ and $I$ from above. 
We estimate $H$ by Corollary \ref{cor:TX} with $q = 2/(1-\tau)$ and monotonicity 
of $g$, to obtain
\begin{equation}\label{est:M}
\begin{aligned}
H&\leq  c\mu(r)^4\int_\Omega \eta^{q+2} \weight | Tu|^{q-2}|\X(Tu)|^2\, dx\\
&\le  \frac{c}{r^{q+2}}\mu(r)^4\int_{B_r}\weight |\X u|^q \, dx 
\le  \frac{c}{r^{q+2}}| B_r|\F(\mu(r))\mu(r)^{q+4},
\end{aligned}
\end{equation}
where $c = c(n,g_0,L,\tau)>0$. 

Now, we fix 
$1<\gamma<2$ and estimate each term of $I$ in \eqref{def:I} as follows. 
For the first term of $I$, we have by H\"older's inequality and monotonicity of 
$g$ that 
\begin{equation}\label{I1}
\begin{aligned}
\int_\Omega \eta^{\beta+2}
v^{\beta+4}
&\weight |\X u|^4\big(|\X \eta|^2+\eta| T\eta|\big)\, dx\\
& \leq \frac{c}{r^2}\F(\mu(r))\mu(r)^8
|B_r|^{1-\frac{1}{\gamma}}
\Big(\int_{B_r} \eta^{\gamma\beta}v^{\gamma\beta}\dx\Big)^\frac{1}{\gamma}.
\end{aligned}
\end{equation}
For the second term of $I$, we similarly have 
\begin{equation}\label{I2}
\begin{aligned}
\int_\Omega \eta^{\beta+4} v^{\beta+2}\weight |\X u|^4
| \X v|^2\, dx\leq  c\F(\mu(r))\mu(r)^4
\int_{B_r} \eta^{\beta+4} v^{\beta+2}| \X v|^2\, dx.
\end{aligned}
\end{equation}
For the third term of $I$, we have that
\begin{equation}\label{I3}
\begin{aligned}
\int_\Omega 
\eta^{\beta+4} v^{\beta+4}
&\weight |\X u|^2|Tu|^2\dx\\
\ \leq\ & \Big(\int_\Om \eta^\frac{2\gamma}{\gamma-1}\weight |\X u|^2
|Tu|^\frac{2\gamma}{\gamma-1}\dx\Big)^{1-\frac{1}{\gamma}}\\
&\times\Big( \int_\Om \eta^{\gamma(\beta+2)}v^{\gamma(\beta+4)}
\weight |\X u|^2 \dx\Big)^\frac{1}{\gamma}\\
\ \leq\ & \frac{c}{r^2}\F(\mu(r))\mu(r)^8
|B_r|^{1-\frac{1}{\gamma}}
\Big(\int_{B_r} \eta^{\gamma\beta}v^{\gamma\beta}\dx\Big)^\frac{1}{\gamma}
\end{aligned}
\end{equation}
where $c = c(n,g_0,L,\gamma)>0$. Here in the above inequalities, the first one follows from H\"older's inequality and the second from Lemma \ref{cor:Tu:high} 
and monotonicity of $g$.  
Combining the estimates for three items of $I$ above \eqref{I1}, \eqref{I2} and \eqref{I3}, we get the following estimate for $I$,
 \begin{equation}\label{est:I}
\begin{aligned}
I \leq c(\beta+2)^2 \F(\mu(r))\mu(r)^4 J,
\end{aligned}
\end{equation}
where $J$ is defined as in \eqref{def:J}   
\begin{equation*}
 J = \int_{B_r} \eta^{\beta+4} v^{\beta+2} |\X v|^2\dx 
\ +\ \mu(r)^4\frac{|B_r|^{1-\frac{1}{\gamma}}}{r^2} 
\Big(\int_{B_r} \eta^{\gamma\beta}v^{\gamma\beta}\dx\Big)^\frac{1}{\gamma}.
\end{equation*}
Now from  the estimates \eqref{est:M} for $G$ and  
\eqref{est:I} for $I$, we obtain the desired estimate for $M$ by \eqref{est:K''}. Combing \eqref{est:M}, \eqref{est:I} and \eqref{est:K''}, we end up with 
\begin{equation}\label{est:K f}
M \leq c (\beta+2)^{2\tau}\frac{|B_r|^{1-\tau}}{r^{2(2-\tau)}}\F(\mu(r))
 \mu(r)^6 J^{\tau},
\end{equation}
where $c = c(n,g_0,L,\tau,\gamma)>0$. This completes the proof. 
\end{proof}
Now we provide the proof of Lemma \ref{lem:main}, for completeness. 
\begin{proof}[Proof of Lemma \ref{lem:main}]
 First, notice that we may assume $\gamma<3/2$, since 
 otherwise we can apply H\"older's inequality to the integral 
 in the right hand side of the claimed inequality \eqref{eq:mainest}. 
 Also, we recall from \eqref{def:v}, that for some $l\in\{1,\ldots,2n\}$, 
 $$ v=\min\big(\mu(r)/8\,,\,\max\,(\mu(r)/4-X_lu,0)\big).$$ 
 We prove the lemma assuming $l\in\{1,\ldots,n\}$; the case for 
 $l\in\{n+1,\ldots,2n\}$ can be proven similarly. 
 Henceforth, we fix $1<\gamma<3/2$ and $l\in\{1,\ldots,n\}$ throughout 
 the rest of the proof. 
 Let $\beta\ge 0$ and $\eta\in C^\infty_0(B_r)$ be a cut-off 
 function satisfying (\ref{etaBr1}) and (\ref{etaBr2}). Using test function 
 $\varphi=\eta^{\beta+4}v^{\beta+3}$ for the equation \eqref{eq:Xu}, 
 we obtain 
 \begin{equation}\label{lemest2}
\begin{aligned}
-(\beta+3)\int_\Omega &\sum_{i,j=1}^{2n}\eta^{\beta+4}v^{\beta+2} 
D_j\A_i(\X u)X_jX_luX_i v\, dx\\
\ =\ & (\beta+4)\int_\Omega \sum_{i,j=1}^{2n} \eta^{\beta+3}v^{\beta+3} 
D_j\A_i(\X u) X_jX_luX_i\eta\, dx\\
& + (\beta+4)\int_\Omega\sum_{i=1}^{2n} \eta^{\beta+3}v^{\beta+3} 
D_i\A_{n+l}(\X u)Tu X_i\eta\, dx\\
&+(\beta+3)\int_\Omega \sum_{i=1}^{2n} \eta^{\beta+4} v^{\beta+2} 
D_i\A_{n+l}(\X u)X_i v \,Tu\, dx\\
&-\int_\Omega \eta^{\beta+4}v^{\beta+3}\,T\big( \A_{n+l}(\X u)\big)\, dx.
\end{aligned}
\end{equation}
Now notice that from \eqref{eq:comm}, we have 
\begin{equation*}
\begin{aligned}
\sum_{i,j=1}^{2n}&D_j\A_i(\X u) X_jX_luX_i\eta+ 
\sum_{i=1}^{2n} D_i\A_{n+l}(\X u)Tu X_i\eta\\ 
=&\sum_{i,j=1}^{2n}D_j\A_i(\X u) X_lX_juX_i\eta
\,=\sum_{i=1}^{2n} X_l\big(\A_i(\X u)\big)X_i\eta.
\end{aligned}
\end{equation*}
Thus, we can combine the first two integrals in the right hand 
side of (\ref{lemest2}) by the above equality. 
Then (\ref{lemest2}) becomes
\begin{equation}\label{lemest3}
\begin{aligned}
-(\beta+3)&\int_\Omega \sum_{i,j=1}^{2n}\eta^{\beta+4}v^{\beta+2} 
D_j\A_i(\X u)X_jX_luX_i v\, dx\\
= \ 
& (\beta+4)\int_\Omega\sum_{i=1}^{2n} \eta^{\beta+3}v^{\beta+3} 
X_l\big(\A_i(\X u)\big)X_i\eta \, dx\\
&+(\beta+3)\int_\Omega \sum_{i=1}^{2n} \eta^{\beta+4} v^{\beta+2} 
D_i\A_{n+l}(\X u)X_ivTu\, dx\\
& -\int_\Omega \eta^{\beta+4}v^{\beta+3}T\big(\A_{n+l}(\X u)\big)\, dx\\
=\  &\, I_1+I_2+I_3,
\end{aligned}
\end{equation}
where we denote the terms in the right hand side of 
(\ref{lemest3}) by $I_1,I_2,I_3$, respectively. 

We will estimate both sides 
of (\ref{lemest3}) as follows. 
For the left hand side, denoting $E$ as in \eqref{eq:setE} and using structure 
condition \eqref{eq:str cond diff}, we have 
\begin{equation}\label{est:lemleft}
\begin{aligned}
\text{left of } (\ref{lemest3}) & \ge (\beta+3)\int_E \eta^{\beta+4}
v^{\beta+2}\weight |\X v|^2\, dx\\
& \ge c_0(\beta+2)\F(\mu(r))\int_{B_r}\eta^{\beta+4}v^{\beta+2}|\X v|^2\, dx,
\end{aligned}
\end{equation}
for a constant $c_0=c_0(n,g_0, L)>0$. Here we have used \eqref{vis0} and 
\eqref{comparable1}. 

For the right hand side of (\ref{lemest3}), we claim that
each item $I_1,I_2,I_3$ satisfies  
\begin{equation}\label{lem:claim}
\begin{aligned}
| I_m|\ \le &\ \frac{c_0}{6}(\beta+2)\F(\mu(r))\int_{B_r}\eta^{\beta+4}v^{\beta+2}|\X v|^2\, dx\\
&\ +c(\beta+2)^3
\frac{| B_r|^{1-1/\gamma}}{r^2}
  \F(\mu(r))\mu(r)^4\Big(\int_{B_r}\eta^{\gamma\beta}v^{\gamma\beta}\, dx\Big)^{1/\gamma},
\end{aligned}
\end{equation}
where $m=1,2,3$, $1<\gamma<3/2$ and  $c$  is a constant depending only on $n,g_0,L$ and $\gamma$.
Then the lemma follows from the estimate (\ref{est:lemleft}) for the
left hand side of (\ref{lemest3}) and the above claim
(\ref{lem:claim}) for each item in the right. Thus, we are only left with 
proving the claim \eqref{lem:claim}. 

In the rest of the proof, we estimate
$I_1, I_2,I_3$ one by one. First for $I_1$, using integration by parts, 
we have that
\[ I_1=-(\beta+4)\int_\Omega\sum_{i=1}^{2n} \A_i(\X u)X_l\big( \eta^{\beta+3}
v^{\beta+3}X_i\eta\big)\, dx,\]
from which it follows by the structure condition \eqref{eq:str cond diff}, that
\begin{equation}\label{lemest4}
\begin{aligned}
| I_1|\le &\, c(\beta+2)^2\int_\Omega \eta^{\beta+2} v^{\beta+3}\weight |\X u|\big(|\X\eta|^2+\eta|\X\X\eta|\big)\, dx\\
&+ c(\beta+2)^2\int_\Omega \eta^{\beta+3}v^{\beta+2}\weight |\X u||\X v\|\X\eta|\, dx\\
\le & \, \frac{c}{r^2}(\beta+2)^2\F(\mu(r))\mu(r)^4\int_{B_r} \eta^\beta
v^\beta\, dx\\
&\, +\frac{c}{r}(\beta+2)^2\F(\mu(r))
\mu(r)^2\int_{B_r}\eta^{\beta+2}v^{\beta+1}|\X v|\, dx,
\end{aligned}
\end{equation}
where $c=c(n,g_0,L)>0$. For the latter inequality of \eqref{lemest4}, we have used the fact that $g(t)=t\F(t)$ is monotonically increasing. 
Now we apply Young's inequality to the last term of  
(\ref{lemest4}) to end up with 
\begin{equation}\label{lemest:I1}
\begin{aligned}
| I_1|\le &\, \frac{c_0}{6} (\beta+2)\F(\mu(r))
\int_{B_r}\eta^{\beta+4}v^{\beta+2}|\X v|^2\, dx\\
&\, +\frac{c}{r^2}(\beta+2)^3\F(\mu(r))\mu(r)^4\int_{B_r}\eta^\beta v^\beta\, dx,
\end{aligned}
\end{equation}
where $c=c(n,g_0,L)>0$ and $c_0$ is the same constant as in (\ref{est:lemleft}).
The claimed estimate (\ref{lem:claim}) for $I_1$, follows from the 
above estimate (\ref{lemest:I1}) and H\"older's inequality.

To estimate $I_2$, we have by the structure condition \eqref{eq:str cond diff} that
\[ | I_2| \le c(\beta+2) \int_\Omega \eta^{\beta+4} v^{\beta+2} \weight 
| \X v| | Tu|\, dx,
\]
from which it follows by H\"older's inequality that
\begin{equation}\label{lemest5}
\begin{aligned}
| I_2| \le  &\, c(\beta+2) \Big(\int_E \eta^{\beta+4}v^{\beta+2}\weight|\X v|^2\, dx\Big)^{\frac{1}{2}}\\
&\qquad\qquad \times \Big(\int_E \eta^{\gamma(\beta+2)}v^{\gamma(\beta+2)}\weight \, dx\Big)^{\frac{1}{2\gamma}}\\
& \qquad \qquad \times\Big(\int_\Omega \eta^q \,\weight| Tu|^q\, dx\Big)^{\frac{1}{q}},
\end{aligned}
\end{equation}
where $q=2\gamma/(\gamma-1)$. 
The fact that the integrals are on the set 
$E$, is crucial since we can use \eqref{comparable1} and the following estimates can not be carried out unless the function $\F$ is increasing. We have the following estimates for the first two integrals of the above, using \eqref{comparable1}. 
\begin{align}
\label{lemest6}
\int_E \eta^{\beta+4}v^{\beta+2}\weight|\X v|^2\, dx
 \le c \F(\mu(r)) \int_{B_r} \eta^{\beta+4}
v^{\beta+2}|\X v|^2\, dx, 
\end{align}
and 
\begin{align}
\label{lemest7}
\int_E \eta^{\gamma(\beta+2)}v^{\gamma(\beta+2)}\weight \, dx\le c \F(\mu(r))\mu(r)^{2\gamma}\int_{B_r} \eta^{\gamma\beta} v^{\gamma\beta}\, dx,
\end{align}
where $c=c(n,g_0,L)>0$. We estimate the last integral in the right hand side of 
(\ref{lemest5}) by \eqref{eq:Tint} of Lemma \ref{cor:Tu:high} and monotonicity of $g$, to obtain
\begin{equation}\label{lemest8}
\begin{aligned}
\int_\Omega \eta^q \,\weight| Tu|^q\, dx& \le \frac{c}{r^q}\int_{B_r}\weight |\X u|^q\, dx\le \frac{c| B_r|}{r^q}\F(\mu(r))\mu(r)^q,
\end{aligned}
\end{equation}
where $c=c(n,g_0,L, \gamma)>0$. Now combining the above three estimates (\ref{lemest6}), (\ref{lemest7}) and (\ref{lemest8}) for the three integrals in (\ref{lemest5}) respectively, we end up with the following estimate for $I_2$ 
\begin{equation*}
| I_2| \le c(\beta+2)\frac{| B_r|^{\frac{\gamma-1}{2\gamma}}}{r}\F(\mu(r))
\mu(r)^2\Big(\int_{B_r}\eta^{\beta+4}v^{\beta+2}|\X v|^2\, dx\Big)^{\frac{1}{2}}
\Big(\int_{B_r}\eta^{\gamma\beta}v^{\gamma\beta}\, dx\Big)^{\frac{1}{2\gamma}},
\end{equation*}
from which, together with Young's inequality, the claim 
(\ref{lem:claim}) for $I_2$ follows.

Finally, we prove the claim (\ref{lem:claim}) for $I_3$.  Recall that
\[ I_3=-\int_\Omega\eta^{\beta+4}v^{\beta+3}T\big(\A_{n+l}(\X u)\big)\, dx.\]
By virtue of the regularity (\ref{reg:v}) for $v$,  integration by parts yields
\begin{equation}\label{lemest10}
\begin{aligned}
I_3 =&\, \int_\Omega \A_{n+l}(\X u) T\big(\eta^{\beta+4}v^{\beta+3}\big)\, dx\\
=&\, (\beta+4)\int_\Omega \eta^{\beta+3}v^{\beta+3} \A_{n+l}(\X u)T\eta\, dx\\
&\, +(\beta+3)\int_\Omega \eta^{\beta+4}v^{\beta+2} \A_{n+l}(\X u)Tv\, dx
= I_3^1+I_3^2,
\end{aligned}
\end{equation}
where we denote the last two integrals in the above equality by $I_3^1$ and $I_3^2$, respectively. The estimate for $I_3^1$ easily follows from the 
structure condition \eqref{eq:str cond diff} and monotonicity of $g$, as 
\begin{equation}\label{est:I31}
\begin{aligned}
| I_3^1|\le & \, c(\beta+2) \int_\Omega \eta^{\beta+3}v^{\beta+3}
\weight |\X u|| T\eta|\, dx\\
\le & \,\frac{c}{r^2}(\beta+2)\F(\mu(r))\mu(r)^4\int_{B_r}\eta^\beta
v^\beta\, dx.
\end{aligned}
\end{equation}
Thus by H\"older's inequality, $I_3^1$ satisfies estimate (\ref{lem:claim}). To 
estimate $I_3^2$,  
note that by (\ref{vis0}) and the 
structure condition \eqref{eq:str cond diff} we have
\begin{equation}\label{lemest11}
| I_3^2|\le c(\beta+2)\int_E \eta^{\beta+4}v^{\beta+2}\weight |\X u|| \X (Tu)|\, dx,
\end{equation}
where the set $E$ is as in \eqref{eq:setE}. For $1<\gamma<3/2$, we 
continue to estimate $I_3^2$ by H\"older's inequality as follows, 
\[\begin{aligned} | I_3^2|\le c(\beta+2)&\Big(\int_E \eta^{(2-\gamma)(\beta+2)+4}v^{(2-\gamma)(\beta+4)} \weight |\X u|^2|\X (Tu)|^2\, dx\Big)^{\frac 1 2}\\
& \times \Big(\int_E\eta^{\gamma(\beta+2)}v^{\gamma\beta+4(\gamma-1)}\weight\, dx\Big)^{\frac 1 2}.\end{aligned}\]
Since, we have (\ref{comparable1}) on the set $E$, hence 
\begin{equation}\label{est:I32}
| I_3^2| \le c(\beta+2)\F(\mu(r))^\frac{1}{2}\mu(r)^{2(\gamma-1)-1} M^{\frac 1 2}\Big(\int_{B_r} \eta^{\gamma\beta}
v^{\gamma\beta}\, dx\Big)^{\frac 1 2},
\end{equation}
where 
\begin{equation}\label{def:K new}
 M= \int_\Omega \eta^{(2-\gamma)(\beta+2)+4}\,v^{(2-\gamma)(\beta+4)} 
\weight |\X u|^4|\X (Tu)|^2\, dx.
\end{equation}
Now we can apply Lemma \ref{lem:tech} to estimate 
$M$ from above. 
Note that Lemma \ref{lem:tech}
with $\tau = 2-\gamma$, gives us that
\begin{equation}\label{est:K final}
M \leq c (\beta+2)^{2(2-\gamma)}\frac{|B_r|^{\gamma-1}}{r^{2\gamma}}\F(\mu(r))
 \mu(r)^6\,J^{2-\gamma}
\end{equation}
where $c = c(n,g_0,L,\gamma)>0$ and
$J$ is defined  as in  \eqref{def:J}     
\begin{equation}\label{def:J new}
 J = \int_{B_r} \eta^{\beta+4}v^{\beta+2} |\X v|^2\dx 
+ \mu(r)^4\frac{|B_r|^{1-\frac{1}{\gamma}}}{r^2} 
\Big(\int_{B_r} \eta^{\gamma\beta}v^{\gamma\beta}\dx\Big)^\frac{1}{\gamma}.
\end{equation}
Now, it follows from \eqref{est:K final} and  \eqref{est:I32} that 
$$ |I^2_3|\leq c(\beta+2)^{3-\gamma} 
\F(\mu(r))\mu(r)^{2\gamma}\,
\frac{|B_r|^\frac{\gamma-1}{2}}{r^\gamma}\,
J^\frac{2-\gamma}{2}
\Big(\int_{B_r} 
\eta^{\gamma\beta}v^{\gamma\beta}\dx\Big)^\frac{1}{2}.$$
By Young's inequality, we end up with
\begin{equation*}
\begin{aligned}
|I^2_3|\leq & \,\frac{c_0}{12}(\beta+2)\F(\mu(r)) J\\
 & + c(\beta+2)^{\frac{4}{\gamma}-1}
 \F(\mu(r))\mu(r)^4
\frac{|B_r|^{1-\frac{1}{\gamma}}}{r^2}
\Big(\int_{B_r} \eta^{\gamma\beta}v^{\gamma\beta}\dx\Big)^\frac{1}{\gamma},
\end{aligned}
\end{equation*}
where $c_0>0$ is the same constant as in \eqref{lem:claim}. 
Note that, with $ J$ as in \eqref{def:J new}, $I_3^2$ satisfies 
an estimate similar to \eqref{lem:claim}. Now the desired claim \eqref{lem:claim}
for $I_3$ follows, since both $I_3^1$ and $I_3^2$ satisfy similar estimates.  
This concludes the proof of the claim \eqref{lem:claim}, and hence the proof of the lemma.
\end{proof}
The following corollary follows from Lemma \ref{lem:main} by Moser's iteration. We  refer to \cite{Muk-Zhong} for the proof. 
\begin{Cor}\label{prop:case1}
There exists a constant $\theta = \theta(n,g_0,L)>0$ such that the following statements hold. If we have 
\begin{equation}\label{condition1}
 \vert\{x\in B_r : X_lu<\mu(r)/4\} \vert\leq \theta |B_r|
\end{equation}
for an index $l\in\{1,\ldots,2n\}$ and for a ball $B_r\subset\Omega$,  then 
 \[\inf_{B_{r/2}}X_lu\ge 3\mu(r)/16;\]
Analogously, if we have 
\begin{equation}\label{condition2}
\vert\{x\in B_r : X_lu>-\mu(r)/4\}\vert\leq \theta |B_r|,
\end{equation}
for an index $l\in\{1,\ldots,2n\}$ and for a ball $B_r\subset\Omega$,  then  
\[\sup_{B_{r/2}}X_lu \le\, -3\mu(r)/16.\]
\end{Cor}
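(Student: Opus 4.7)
The plan is to prove Corollary \ref{prop:case1} by Moser iteration, using Lemma \ref{lem:main} as the starting Caccioppoli-type bound. By symmetry between $X_lu$ and $-X_lu$, it suffices to treat the first assertion. Fix a ball $B_r\subset\Om$ and the truncation $v$ from \eqref{def:v}, so that $0\le v\le \mu(r)/8$ and $\supp(v)\cap B_r\sub \{X_lu<\mu(r)/4\}$, a set of measure at most $\theta|B_r|$ by \eqref{condition1}. The target $\inf_{B_{r/2}}X_lu\ge 3\mu(r)/16$ is equivalent to the quantitative sup-bound $\sup_{B_{r/2}}v\le \mu(r)/16$, which I will extract by a standard De Giorgi--Moser argument.

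The first step is to convert \eqref{eq:mainest} into a reverse-H\"older inequality via the Sobolev embedding \eqref{eq:sob emb}. Writing $w=(\eta v)^{(\beta+4)/2}$ with a suitable cutoff $\eta\in C^\infty_0(B_{\rho_2})$, $\eta=1$ on $B_{\rho_1}$ and $|\X\eta|\le c/(\rho_2-\rho_1)$, one has
\[
|\X w|^2 \,\le\, c(\beta+2)^2\bigl(\eta^{\beta+2}v^{\beta+4}|\X\eta|^2 + \eta^{\beta+4}v^{\beta+2}|\X v|^2\bigr).
\]
Bounding the first term by $v\le\mu(r)/8$ and the second by Lemma \ref{lem:main}, and then applying Sobolev with exponent $\chi=Q/(Q-2)$, I would obtain, after dividing through by $\mu(r)^{\beta+4}$ and writing $\bar v=v/\mu(r)\le 1/8$,
\[
\Bigl(\intav_{B_{\rho_1}}\bar v^{(\beta+4)\chi}\,dx\Bigr)^{1/\chi} \,\le\, \frac{C(\beta+2)^{4}}{(1-\rho_1/\rho_2)^{2}}\,\Bigl(\intav_{B_{\rho_2}}\bar v^{\gamma\beta}\,dx\Bigr)^{1/\gamma},
\]
with $C=C(n,g_0,L,\gamma)$ and $1<\gamma<\chi$ fixed once and for all.

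Setting $p_0=\gamma\beta_0$ for some convenient starting exponent $\beta_0\ge 0$ and iterating with $p_{k+1}=(\beta_k+4)\chi$, $\beta_{k+1}=p_{k+1}/\gamma$, the exponents grow geometrically since $\chi/\gamma>1$. Choosing the standard shrinking radii $\rho_k=(1+2^{-k})r/2$ and summing the resulting telescoping series (the factors $C(\beta_k+2)^{4}/(1-\rho_{k+1}/\rho_k)^{2}$ contribute a convergent product after taking $p_k$-th roots), I arrive at
\[
\sup_{B_{r/2}}\bar v \,\le\, C_0\,\Bigl(\intav_{B_r}\bar v^{\,q_0}\,dx\Bigr)^{1/q_0}
\]
for some exponent $q_0=q_0(n,g_0,L)>0$ and constant $C_0=C_0(n,g_0,L)>0$.

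Finally, I use the measure hypothesis: because $\bar v\le 1/8$ is supported on a set of measure at most $\theta|B_r|$,
\[
\Bigl(\intav_{B_r}\bar v^{\,q_0}\,dx\Bigr)^{1/q_0} \,\le\, \frac{1}{8}\,\theta^{1/q_0}.
\]
Choosing $\theta=\theta(n,g_0,L)>0$ small enough that $C_0\,\theta^{1/q_0}\le 1/2$ yields $\sup_{B_{r/2}}v\le \mu(r)/16$, which is the desired conclusion. The main obstacle is the bookkeeping of the Moser iteration: one must verify that the extra $(\beta+2)^2$ factor in \eqref{eq:mainest} does not destroy convergence of the product of constants, which requires the elementary but careful estimate $\prod_k C(\beta_k+2)^{4/p_k}<\infty$ that follows from $p_k\sim (\chi/\gamma)^k$. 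One must also choose $\gamma>1$ close enough to $1$ so that $\chi/\gamma>1$ strictly, which is compatible with the restriction $\gamma<3/2$ already used in the proof of Lemma \ref{lem:main}.
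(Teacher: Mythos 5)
Your plan — Moser iteration starting from Lemma \ref{lem:main}, conversion to a reverse-H\"older inequality via Sobolev, and using the measure hypothesis on the starting integral — is the same route the paper takes through \cite{Muk-Zhong}, and the conclusion does follow. However, there are two technical points in your write-up that need repair.

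The shrinking radii $\rho_k=(1+2^{-k})r/2$ are not supported by Lemma \ref{lem:main} as stated. That lemma fixes one particular cutoff: $\eta\in C^\infty_0(B_r)$, $\eta\equiv 1$ on $B_{r/2}$, with $|\X\eta|\le 4/r$ and $|\X\X\eta|\le 16n/r^2$; the constant $1/r^2$ in \eqref{eq:mainest} is tied to those bounds (and the proof also invokes Lemma \ref{cor:Tu:high} and Lemma \ref{lem:tech} with those same cutoff bounds). To apply it on a family of nested balls $B_{\rho_k}\subset B_{\rho_{k+1}}$ you would have to re-prove the lemma with the dependence $1/(\rho_{k+1}-\rho_k)^2$, which the paper does not provide. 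Fortunately this is unnecessary: because the cutoff appears on \emph{both} sides of \eqref{eq:mainest} — the right-hand side involves $\int_{B_r}\eta^{\gamma\beta}v^{\gamma\beta}\,dx=\int_{B_r}(\eta v)^{\gamma\beta}\,dx$, not $\int_{B_r}v^{\gamma\beta}\,dx$ — you can iterate in $\beta$ alone with one fixed cutoff, setting $w_\beta=(\eta v)^{(\beta+4)/2}$, applying Sobolev on $B_r$ to $w_\beta\in HW^{1,2}_0(B_r)$, and matching exponents by $\gamma\beta_{m+1}=(\beta_m+4)\chi$. Since $\chi/\gamma>1$ the exponents grow geometrically, and $\sup_{B_{r/2}}v\le\sup_{B_r}(\eta v)=\lim_m\bigl(\intav_{B_r}(\eta v)^{p_m}\,dx\bigr)^{1/p_m}$ closes the iteration without shrinking balls.

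The second point is the homogeneity of your claimed final inequality $\sup_{B_{r/2}}\bar v\le C_0(\intav_{B_r}\bar v^{q_0})^{1/q_0}$. Because the Caccioppoli estimate is additive in $\beta$ (left exponent $(\beta+4)\chi$, right exponent $\gamma\beta$), the normalized recursion reads $A_{m+1}\le C_m A_m^{\sigma_m}$ with $\sigma_m=\beta_m/(\beta_m+4)<1$ rather than $\sigma_m=1$, so the iterate is a priori only bounded by $C A_0^{\sigma}$ for some $\sigma<1$. To recover the linear form you should invoke the crude a priori bound $\bar v\le 1/8$, which gives $A_m\le 1/8$ and hence $A_m^{\sigma_m}\le 8^{1-\sigma_m}A_m=8^{4/(\beta_m+4)}A_m$; the product $\prod_m 8^{4/(\beta_m+4)}$ converges because $\beta_m$ grows geometrically. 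With that adjustment your final step — choosing $\theta=\theta(n,g_0,L)$ so small that $C_0\theta^{1/q_0}\le 1/2$ forces $\sup_{B_{r/2}}v\le\mu(r)/16$, hence $\inf_{B_{r/2}}X_lu\ge 3\mu(r)/16$ — is correct.
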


\subsection{Proof of Theorem \ref{thm:holder}}\noindent
\\
At the end of this subsection, we provide the proof of Theorem \ref{thm:holder}. 
As before, we denote $u\in HW^{1,G}(\Om)$ as a weak solution of equation 
\eqref{eq:maineq} We fix a ball $B_{r_0}\subset\Omega$. For all balls
 $B_r, 0<r<r_0$, concentric to $B_{r_0}$, we denote for $l=1,2,...,2n,$
\[ \mu_l(r)=\sup_{B_r} \vert X_lu\vert, \quad \mu(r)=\max_{1\le l\le 2n}\mu_l(r),\]
and 
\[ \omega_l(r)=\osc_{B_r} X_lu, \quad \omega(r)=\max_{1\le l\le 2n}\omega_l(r).\]
We clearly have $\omega(r)\le 2\mu(r)$. For any function $w$, we define
$$ A_{k,\rho}^+(w) =\{ x\in B_\rho: (w(x)-k)^+=\max (w(x)-k,0)>0\};$$ 
and $A_{k,\rho}^-(w)$ is similarly defined. 

The following lemma is similar to Lemma 4.1 of \cite{Muk-Zhong} and Lemma 4.3 of \cite{Zhong}. For sake of completeness, we provide a proof in Appendix I. 
\begin{Lem}\label{lemma:cacci:k}
Let $B_{r_0}\subset\Omega$ be a  ball and $0<r<r_0/2$. Suppose that there is $\tau>0$ such that 
 \begin{equation}\label{comparable'}
 \vert \X u\vert\ge \tau \mu(r) \quad \text{in }\, A_{k,r}^+(X_l u)
 \end{equation}
 for an index $l\in\{1,2,...,2n\}$ and for a constant $k\in \R$. Then for any $q\ge 4$ and any $0<r^{\prime\prime}<r^\prime\le r$, we have 
\begin{equation}\label{HDG}
\begin{aligned}
 \int_{B_{r^{\prime\prime}}} &\weight | \X(X_lu-k)^{+}|^2\, dx\\  
 &\le  
 \frac{c}{(r^\prime-r^{\prime\prime})^2}\int_{B_{r^{\prime}}} \weight
 |(X_lu-k)^{+}|^2\, dx\,+\, cK| A^+_{k,r^\prime}(X_lu)|^{1-\frac{2} {q}}
\end{aligned}
\end{equation}
where $K = r_0^{-2}|B_{r_0}|^{2/q}\mu(r_0)^2\F(\mu(r_0))$ 
and $c\,=\,c(n,p,L,q, \tau)>0$. 
\end{Lem}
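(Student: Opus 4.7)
The plan is to test the differentiated equation \eqref{eq:Xu} for $X_l u$ against $\varphi=\eta^2(X_lu-k)^+$, where $\eta\in C^\infty_0(B_{r'})$ is a standard cutoff with $\eta\equiv1$ on $B_{r''}$, $|\X\eta|\leq c/(r'-r'')$, and $|T\eta|\leq c/(r'-r'')^2$; the case $l\in\{n+1,\ldots,2n\}$ is handled symmetrically with \eqref{eq:Xu other}. Write $w=(X_lu-k)^+$, so $\X X_l u=\X w$ a.e.\ on $\{w>0\}$. The weak formulation produces four classes of terms: (a) the principal term $\int\eta^2\langle D\A(\X u)\X w,\X w\rangle\dx$, bounded below by $\int\eta^2\F(|\X u|)|\X w|^2\dx$ via \eqref{eq:str cond diff}; (b) cross terms $\int \eta w\langle D\A(\X u)\X X_lu,\X\eta\rangle\dx$; (c) $Tu$-terms coming from $X_i(D_i\A_{n+l}(\X u)Tu)$; (d) the term $\int T(\A_{n+l}(\X u))\varphi\dx$, which, using $[T,X_l]=0$, equals $\int\varphi\sum_j D_j\A_{n+l}(\X u)X_jTu\dx$.

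Piece (b) is handled by $|D\A|\leq L\F$ and Young's inequality: one half of the main piece absorbs into the LHS, and the remainder contributes $\leq c(r'-r'')^{-2}\int_{B_{r'}}\F(|\X u|)w^2\dx$, which is the first term on the RHS of \eqref{HDG}. The hypothesis \eqref{comparable'} combined with $|\X u|\leq(2n)^{1/2}\mu(r)$ on $B_r$ and the doubling of $g$ in \eqref{eq:gdoub} delivers the key comparability $\F(|\X u|)\simeq\F(\mu(r))$ on $A^+_{k,r'}$, with constants depending only on $n,g_0,\tau$. Since $w\equiv0$ off $A^+_{k,r'}$, this comparability applies throughout the support of every remaining integrand.

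For (c) and (d), all resulting integrands are dominated using $|D\A|\leq L\F$ by sums of $\F(|\X u|)|Tu|\,|\X w|$, $\F(|\X u|)w|Tu||\X\eta|$, and $\F(|\X u|)w|\X Tu|$. Repeated Young either absorbs gradient squares back into the main piece or into the first term of \eqref{HDG}, leaving the $Tu$-portion concentrated in $\int_{A^+_{k,r'}}\F(|\X u|)|Tu|^2\dx$ and a companion $\int\eta^2\F(|\X u|)|\X Tu|^2\dx$ controlled by Lemma~\ref{caccioppoli:T}/Corollary~\ref{cor:TX}. H\"older with conjugate exponents $q/2$ and $q/(q-2)$ together with the comparability above yields
\[
\int_{A^+_{k,r'}}\F(|\X u|)|Tu|^2\dx\,\leq\, c\F(\mu(r))^{1-2/q}|A^+_{k,r'}|^{1-2/q}\Big(\int_{B_{r_0/2}}\F(|\X u|)|Tu|^q\dx\Big)^{2/q}.
\]
Lemma~\ref{cor:Tu:high} applied with a cutoff supported in $B_{r_0}$ bounds the last integral by $cr_0^{-q}|B_{r_0}|\F(\mu(r_0))\mu(r_0)^q$; combining with the monotonicity $\F(\mu(r))\mu(r)^2=\mu(r)g(\mu(r))\leq\mu(r_0)g(\mu(r_0))=\F(\mu(r_0))\mu(r_0)^2$ (both $t$ and $g(t)$ are increasing) and reassembling powers produces the target $cK|A^+_{k,r'}|^{1-2/q}$.

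The main obstacle is the bookkeeping in (d) when the factor $w|\X Tu|$ appears: Cauchy--Schwarz splits it into a piece that must be controlled by the first term of \eqref{HDG} and a second piece demanding Corollary~\ref{cor:TX} at the right power, so that ultimately every $\F$-power is reduced correctly at both scales $r$ and $r_0$ via the monotonicity of $tg(t)$, and the several contributions aggregate into a single $cK|A^+_{k,r'}|^{1-2/q}$-shaped bound. The estimates must moreover remain uniform in the regularization constants $m_1,m_2$ from the apriori assumption \eqref{eq:ass}, which is automatic since $K$ never involves $\F$ outside the combination $\mu(r_0)^2\F(\mu(r_0))=\mu(r_0)g(\mu(r_0))$.
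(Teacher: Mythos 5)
Your overall structure and initial steps follow the paper faithfully: the test function $\varphi=\eta^2(X_lu-k)^+$ in equation \eqref{eq:Xu}, the use of \eqref{comparable'} together with \eqref{eq:gdoub} to get $\F(|\X u|)\simeq\F(\mu(r))$ on the level set, the integration by parts that turns $T(\A_{n+l}(\X u))\varphi$ into a $\X Tu$-term, and the H\"older/Corollary~\ref{cor:TX} treatment that produces the $|A^+_{k,r'}|^{1-2/q}$ factor for the $\int_{A^+}\eta^2\F(|\X u|)|Tu|^2\dx$ piece.

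However, there is a genuine gap in your treatment of the term $J_3=c\int_{B_r}\eta^2(X_lu-k)^+\F(|\X u|)|\X Tu|\dx$. You propose to dispose of it by ``repeated Young'' plus Cauchy--Schwarz, leaving a companion $\int\eta^2\F(|\X u|)|\X Tu|^2\dx$ controlled by Lemma~\ref{caccioppoli:T}/Corollary~\ref{cor:TX}, and a piece allegedly absorbable into the first term of \eqref{HDG}. This does not work as stated. If you split off the weight $w=(X_lu-k)^+$ by Young, the resulting $\int\eta^2\F(|\X u|)w^2\dx$ carries only $\eta^2$, not $|\X\eta|^2$, so it cannot be absorbed into $\tfrac{c}{(r'-r'')^2}\int\F(|\X u|)w^2\dx$. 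If instead you split off $\int\eta^2\F(|\X u|)|\X Tu|^2\dx$ and drop $w$, the level-set restriction is lost: Lemma~\ref{caccioppoli:T} passes you to $\int|\X\eta|^2\F(|\X u|)|Tu|^2\dx$, and any subsequent H\"older produces $|B_{r'}|^{1-2/q}$, not the required $|A^+_{k,r'}|^{1-2/q}$. To keep the factor $w^2$ long enough for the level set to survive the H\"older step, while still eventually reaching the $Tu$-integrability of Corollary~\ref{cor:TX}, the paper runs an iteration: it tests equation \eqref{eq:Tu} against $\eta^2w^2|Tu|^\kappa Tu$ to obtain
\begin{equation*}
a_m:=\int\eta^2\,\F(|\X u|)\,w^2|Tu|^{\kappa_m}|\X Tu|^2\dx,\qquad \kappa_m=2^m-2,
\end{equation*}
and the recursion $a_1\leq(c\mathcal{M})^{1-2^{-m}}a_{m+1}^{2^{-m}}$, where $\mathcal{M}$ is the sum of the gradient term and the $|\X\eta|^2w^2$ term. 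After bounding $a_{m+1}$ crudely via Corollary~\ref{cor:TX} and taking $m$ large enough that $2^m/(2^m+1)\geq 1-2/q$, one finally obtains $J_3\leq\mathcal{M}/2+cK|A^+_{k,r'}|^{1-2/q}$. This iteration (with the auxiliary test function in \eqref{eq:Tu}) is the crux of the proof and does not follow from Young and Corollary~\ref{cor:TX} alone; your proposal is missing it.
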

\begin{Rem}\label{rem:-version}
 Similarly, we can obtain an inequality, corresponding to (\ref{HDG}), 
 with 
 $(X_lu-k)^+$ replaced by $(X_lu-k)^-$ and
 $A^+_{k,r}(X_lu)$ replaced by $A^-_{k,r}(X_lu)$.
\end{Rem}
\begin{Lem}\label{lem:hold}
There exists a constant $s=s(n,g_0,L)\ge 0$ such that
for every $0<r\leq r_0/16$, we have the following, 
\begin{equation}\label{mur4r}
\omega(r) \le (1-2^{-s})\omega(8r) + 2^s\mu(r_0)\left(\frac{r}{r_0}\right)^\alpha,
\end{equation}
where $\alpha = 1/2$ when $0<g_0<1$ and $\alpha = 1/(1+g_0)$ when $g_0\geq 1$. 
\end{Lem}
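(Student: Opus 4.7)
The proof proceeds by a dichotomy built on Corollary \ref{prop:case1}. Fix $l_0$ realizing $\mu_{l_0}(8r)=\mu(8r)$, and by Remark \ref{rem:-version} assume $\sup_{B_{8r}}X_{l_0}u=\mu(8r)$ (the opposite sign being symmetric). Let $\theta=\theta(n,g_0,L)$ be the constant of Corollary \ref{prop:case1}. We split according to whether hypothesis (\ref{condition1}) holds for $X_{l_0}u$ on $B_{8r}$.

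\textbf{Non-degenerate case.} If $|\{X_{l_0}u<\mu(8r)/4\}|\le\theta|B_{8r}|$, Corollary \ref{prop:case1} yields $\inf_{B_{4r}}X_{l_0}u\ge 3\mu(8r)/16$, so $|\X u|\ge 3\mu(4r)/16$ on $B_{4r}$ and the weight $\F(|\X u|)$ is comparable to $\F(\mu(4r))$: the equation is effectively uniformly elliptic on $B_{4r}$. Applying Lemma \ref{lemma:cacci:k} with $\tau=3/16$ to each $X_lu$ and to $-X_lu$ (via Remark \ref{rem:-version}) with a dyadic sequence of cut-off levels, a standard De Giorgi iteration based on Theorem \ref{thm:sob emb} produces a geometric oscillation decay $\osc_{B_r}X_lu\le\gamma\,\osc_{B_{4r}}X_lu$ with $\gamma=\gamma(n,g_0,L)\in(0,1)$, uniform in $l$. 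Maximizing over $l$ gives $\omega(r)\le\gamma\,\omega(8r)$, which implies the claim once $2^{-s}\le 1-\gamma$.

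\textbf{Degenerate case.} If $|\{X_{l_0}u<\mu(8r)/4\}|>\theta|B_{8r}|$, combined with $\sup_{B_{8r}}X_{l_0}u=\mu(8r)$ this forces $\omega_{l_0}(8r)\ge 3\mu(8r)/4$, so $\omega(8r)/2\le\mu(8r)\le 4\omega(8r)/3$. Apply Lemma \ref{lemma:cacci:k} to $w=X_{l_0}u$ with level $k=\mu(8r)/4$: on $A^+_{k,8r}(w)$ we have $|\X u|\ge|X_{l_0}u|>\mu(8r)/4\ge\mu(r)/4$, so the non-degeneracy holds with $\tau=1/4$, and $|A^+_{k,8r}(w)|\le(1-\theta)|B_{8r}|$. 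A De Giorgi iteration through nested balls from $B_{4r}$ down to $B_r$ yields $\sup_{B_r}X_{l_0}u\le\mu(8r)/4+C\mu(r_0)(r/r_0)^\alpha$. Together with $\inf_{B_r}X_{l_0}u\ge\mu(8r)-\omega_{l_0}(8r)$, this gives $\omega_{l_0}(r)\le\omega_{l_0}(8r)-3\mu(8r)/4+C\mu(r_0)(r/r_0)^\alpha\le\frac{5}{8}\omega(8r)+C\mu(r_0)(r/r_0)^\alpha$. For $l\neq l_0$, either $\mu_l(8r)\ge\mu(8r)/4$ and the same argument applies, or $\mu_l(8r)<\mu(8r)/4$ and the trivial bound $\omega_l(r)\le 2\mu_l(8r)\le\frac{2}{3}\omega(8r)$ already suffices. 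Taking the maximum over $l$ and choosing $s$ so that $2^{-s}\le 1/3$ and $2^s\ge C$ completes the proof.

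\textbf{Principal obstacle.} The technical crux lies in running the De Giorgi iteration with the weighted Caccioppoli inequality of Lemma \ref{lemma:cacci:k}, whose constant $K\sim r_0^{-2}|B_{r_0}|^{2/q}\mu(r_0)^2\F(\mu(r_0))$ scales with the global maximum $\mu(r_0)$; extracting the precise H\"older exponent $\alpha$ requires a careful balance between this weight and the Sobolev exponent $q/(q-2)$ available in the iteration. The bifurcation $\alpha=1/2$ for $g_0<1$ versus $\alpha=1/(1+g_0)$ for $g_0\ge 1$ reflects the different scaling behavior of $\F(t)=g(t)/t$ in the two regimes, corresponding to whether the interpolation between the Caccioppoli bound and the measure-of-level-set bound effectively proceeds by a square-root rate or through the conjugate exponent $1+g_0$.
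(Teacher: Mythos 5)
Your high-level structure (dichotomy on the measure of $\{X_{l_0}u<\mu(8r)/4\}$, Corollary \ref{prop:case1} in one branch, level-set/De Giorgi in the other) is broadly in the spirit of the paper, but several of the claimed conclusions are strictly stronger than what the machinery actually yields, and the step that produces the exponent $\alpha$ is never carried out.

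First, in the non-degenerate branch you assert a pure geometric decay $\osc_{B_r}X_lu\le\gamma\osc_{B_{4r}}X_lu$. This is false: Lemma \ref{lemma:cacci:k} has the inhomogeneous term $cK\vert A^+_{k,r'}\vert^{1-2/q}$ with $K\sim r_0^{-2}\vert B_{r_0}\vert^{2/q}\mu(r_0)^2\F(\mu(r_0))$, so the De Giorgi iteration necessarily produces an additive remainder of the type $cK^{1/2}\F(\mu(2r))^{-1/2}r^{1/2}$ (cf.\ \eqref{osc} in the paper). Converting that remainder into $\mu(r_0)(r/r_0)^\alpha$ is exactly where the preliminary normalization \eqref{assum:mu} and the comparison $\F(\mu(r_0))/\F(\mu(2r))\le(\mu(r_0)/\mu(2r))^{g_0-1}$ enter, and this is what forces the bifurcation $\alpha=1/2$ versus $\alpha=1/(1+g_0)$. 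You flag this as the ``principal obstacle'' but never execute it, and without \eqref{assum:mu} the weight comparison is unavailable; the lemma as you have written the non-degenerate branch does not follow.

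Second, in the degenerate branch the claim $\sup_{B_r}X_{l_0}u\le\mu(8r)/4+C\mu(r_0)(r/r_0)^\alpha$ is too strong. Knowing only $\vert A^+_{\mu(8r)/4,\,8r}(X_{l_0}u)\vert\le(1-\theta)\vert B_{8r}\vert$ gives, via the measure-to-pointwise (Lemma 4.2 of \cite{Zhong}) step, only a \emph{fractional} reduction of the excess, $\sup_{B_{2r}}X_{l_0}u\le\sup_{B_{8r}}X_{l_0}u-2^{-s_1}\bigl(\sup_{B_{8r}}X_{l_0}u-\mu(8r)/4\bigr)+\text{(additive)}$; it cannot push the supremum below the truncation level when the level set occupies a fixed fraction of the ball (take $X_{l_0}u$ equal to $\mu(8r)$ on a third of the ball). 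The oscillation improvement you extract from this overstated supremum bound must be re-derived from the correct, weaker inequality.

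Third, your dichotomy is anchored on the single index $l_0$; but in the ``degenerate'' branch, for $l\ne l_0$ with $\mu_l(8r)\ge\mu(8r)/4$, you write ``the same argument applies.'' If the measure condition happens to \emph{hold} for such an $l$, the level set $A^+_{\mu(8r)/4,\,8r}(X_lu)$ is large and the level-set reduction argument does not apply; you would instead need to pass to the uniformly elliptic argument (via Corollary \ref{prop:case1}) for that index, which you have not set up. The paper avoids this by organizing the dichotomy as ``some index satisfies \eqref{small1}/\eqref{small2}'' versus ``none does,'' so that in the second case the failure of \emph{both} measure conditions for \emph{every} index is available uniformly.
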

\begin{proof}
To prove the lemma, 
we fix a ball $B_r$ concentric to $B_{r_0}$, such that $0<r<r_0/16$. 

Letting $\alpha = 1/2$ when $0<g_0<1$ and $\alpha = 1/(1+g_0)$ 
when $g_0\geq 1$, we may assume that
\begin{equation}\label{assum:mu}
\omega(r) \ge \mu(r_0)\left(\frac{r}{r_0}\right)^{\alpha},
\end{equation}
since, otherwise, \eqref{mur4r} is true with $s=0$. 
In the following, we assume that (\ref{assum:mu}) is true and we divide the 
proof into two cases.

{\it Case 1}. For at least one index $l\in\{1,\ldots,2n\}$, we have either
\begin{equation}\label{small1}
  |\{x\in B_{4r} : X_lu<\mu(4r)/4\}|
 \leq \theta |B_{4r}|
 \end{equation}
or
 \begin{equation}\label{small2}
  |\{x\in B_{4r}: X_lu>-\mu(4r)/4\}|
  \leq \theta |B_{4r}|, 
\end{equation}
where $\theta = \theta(n,g_0,L)>0$ is the constant in Corollary \ref{prop:case1}. 
Assume that (\ref{small1}) is true; the case (\ref{small2}) can be treated in the same way.
We apply Corollary \ref{prop:case1} to obtain that 
\[\vert X_l u\vert \ge 3\mu(4r)/16\quad \text{in }\ B_{2r}.\]
Thus we have
 \begin{equation}\label{comparable2}
  \vert\X u\vert\ge  3\mu(2r)/16\quad \text{in }\ B_{2r}.
 \end{equation}
Due to (\ref{comparable2}),  we can apply Lemma \ref{lemma:cacci:k} with $q = 2Q$ to obtain 
\begin{equation}\label{DG estimate}
\begin{aligned}
\int_{B_{r''}} | \X (X_iu-k)^+|^2\, dx
\,\le\, & \frac{c}{(r'-r'')^2}\int_{B_{r'}}|(X_iu-k)^+|^2\,dx \\
&+  cK \F(\mu(2r))^{-1}| A^+_{k,r'}(X_iu)|^{1-\frac{1} {Q}}
\end{aligned}
\end{equation}
where $K = r_0^{-2}
|B_{r_0}|^{1/Q}\mu(r_0)^2\F(\mu(r_0))$. 
The above inequality holds for all
$0<r''<r'\leq 2r,\ i\in\{1,\ldots,2n\}$ and all $k\in \R$, 
which means that for each $i$, $X_iu$ belongs to the De Giorgi class $DG^+(B_{2r})$, see \cite{Zhong} for details. The corresponding version of 
Lemma \ref{lemma:cacci:k} for $(X_i u-k)^-$, see Remark \ref{rem:-version}, 
shows that $X_iu$ also belong to $DG^-(B_{2r})$ and hence  
$X_iu$ belongs to $DG(B_{2r})$. Now we can apply Theorem 4.1 of \cite{Zhong} 
to conclude that there is $s_0=s_0(n,p,L)>0$ such that for each 
$i\in\{1,2,...,2n\}$
\begin{equation}\label{osc}
 \osc_{B_{r}} X_iu\le (1-2^{-s_0})\osc_{B_{2r}}
X_iu + c K^\frac{1}{2}\F(\mu(2r))^{-\frac{1}{2}} r^\frac{1}{2}.
\end{equation}
Now, from doubling property of $g$, see \eqref{eq:gG3} of Lemma \ref{lem:gandG prop}, we have 
$g(\mu(r_0))\leq \big(\frac{\mu(r_0)}{\mu(2r)}\big)^{g_0} g(\mu(2r))$ 
whenever $2r\leq r_0$ and hence 
$$ \F(\mu(r_0))/\F(\mu(2r)) \leq \big(\mu(r_0)/\mu(2r)\big)^{g_0-1}.$$
Thus, notice that when $0<g_0<1$, we have 
$$ \F(\mu(2r))^{-1} \leq  \F(\mu(r_0))^{-1}$$
and when $g_0\geq 1$, our assumption \eqref{assum:mu} with $\alpha =1/(1+g_0)$ gives 
\begin{align*}
\F(\mu(2r))^{-1} \leq \bigg(\frac{\mu(r_0)}{\mu(2r)}\bigg)^{g_0-1} \F(\mu(r_0))^{-1} &\leq 
2^{g_0-1}\F(\mu(r_0))^{-1}\bigg(\frac{\mu(r_0)}{\om(r)}\bigg)^{g_0-1}\\
&\leq 2^{g_0-1} \F(\mu(r_0))^{-1}\bigg(\frac{r}{r_0}\bigg)^\frac{1-g_0}{1+g_0}
\end{align*} 
where in the second inequality we used that $\mu(2r)\ge \omega(2r)/2\ge \omega(r)/2$.  
In both cases, we find that \eqref{osc} becomes
\begin{equation}\label{osc1}
 \osc_{B_{r}} X_iu \le (1-2^{-s_0})\osc_{B_{2r}}
X_iu+c\mu(r_0)\left(\frac{r}{r_0}\right)^\alpha,
\end{equation}
where $c=c(n,g_0,L)>0$,  $\alpha = 1/2$ when $0<g_0<1$ and $\alpha = 1/(1+g_0)$ when $g_0\geq 1$. This 
shows that the lemma holds in this case. 

{\it Case 2}. If Case 1 does not happen, then for every $i\in\{1,\ldots,2n\}$, we have   
\begin{equation}\label{case2 1}
  |\{x\in B_{4r} : X_iu<\mu(4r)/4\}|
  > \theta |B_{4r}|,
 \end{equation} 
 and
 \begin{equation}\label{case2 2}
 |\{x\in B_{4r}: X_iu>-\mu(4r)/4\}|
 >\theta |B_{4r}|,
\end{equation}
where $\theta = \theta(n,g_0,L)>0$ is the constant in Corollary \ref{prop:case1}. 

Note that on the set 
$\{x\in B_{8r}: X_iu >\mu(8r)/4 \}$, we trivially have
 \begin{equation}\label{comparable4}
 \vert \X u\vert\ge \mu(8r)/4\quad \text{in } A^+_{k,8r}(X_iu)
 \end{equation}
 for all $k\geq \mu(8r)/4$. 
Thus, we can apply Lemma \ref{lemma:cacci:k} with $q = 2Q$ to conclude that 
\begin{equation}\label{DG estimate 2}
\begin{aligned}
\int_{B_{r''}} | \X (X_iu-k)^+|^2\, dx
\,\le\, &\frac{c}{(r'-r'')^2}\int_{B_{r'}}|(X_iu-k)^+|^2\,dx \\
& + c K\,\F(\mu(8r))^{-1}| A^+_{k,r'}(X_iu)|^{1-\frac{1} {Q}}
\end{aligned}
\end{equation}
where $K = r_0^{-2}
|B_{r_0}|^{1/Q}\mu(r_0)^2\F(\mu(r_0))$, 
whenever $k\geq k_0= \mu(8r)/4 $ and $0<r^{\prime\prime}<r^\prime\le 8r$. 
The above inequality is true all $i\in\{ 1,2,...,2n\}$. We note that
\eqref{case2 1} trivially implies
\[|\{x\in B_{4r} : X_iu<\mu(8r)/4\}|
  > \theta |B_{4r}|.\] 
Now we can apply Lemma 4.2 of  \cite{Zhong} 
to conclude that there exists $s_1 = s_1(n,p,L)>0$ such that  
the following holds, 
\begin{equation}\label{sup est}
 \sup_{B_{2r}} X_iu \le\sup_{B_{8r}} X_i u -2^{-s_1}\big(\sup_{B_{8r}}
X_iu-\mu(8r)/4\big)+ cK^\frac{1}{2}\F(\mu(8r))^{-1/2}r^\frac{1}{2}.
\end{equation}
From (\ref{case2 2}), we can derive similarly, see Remark \ref{rem:-version}, that
\begin{equation}\label{inf est}
 \inf_{B_{2r}} X_iu \ge \inf_{B_{8r}} X_iu +2^{-s_1}\big(-\inf_{B_{8r}}
X_iu-\mu(8r)/4\big) - c K^\frac{1}{2}\F(\mu(8r))^{-1/2}r^\frac{1}{2}.
\end{equation}
The above two inequalities \eqref{sup est} and \eqref{inf est} yield
\[\osc_{B_{2r}} X_iu \le\ (1-2^{-s_1})\osc_{B_{8r}}
X_iu+ 2^{-s_1-1}\mu(8r)+c K^\frac{1}{2}\F(\mu(8r))^{-1/2}r^\frac{1}{2},
\]
and hence 
\begin{equation}\label{osc2}
\omega(2r) \leq \big(1-2^{-s_1}\big)\omega(8r)+2^{-s_1-1}\mu(8r)+c K^\frac{1}{2}\F(\mu(8r))^{-1/2} r^\frac{1}{2}.
\end{equation}
By using doubling 
condition of $g$ and the inequality $\mu(8r)\ge \omega(8r)/2\ge \omega(r)/2$ 
along with the assumption \eqref{assum:mu}, we proceed by the same argument as in the preceeding case, to conclude 
\[\omega(2r) \leq \big(1-2^{-s_1}\big)\omega(8r)+2^{-s_1-1}\mu(8r) +c\mu(r_0)\left(\frac{r}{r_0}\right)^\alpha\]
for $\alpha=1/2$ when $0<g_0<1$ and $\alpha=1/(1+g_0)$ when $g_0\geq 1$.
 
Now we notice that (\ref{case2 1}) implies that $\inf_{B_{4r}}X_iu\leq \mu(4r)/4$ 
and (\ref{case2 2}) implies that $\sup_{B_{4r}}X_iu\geq -\mu(4r)/4$ for every 
$i\in\{1,\ldots,2n\}$. Hence 
\[ \omega(8r)\ge \mu(8r)-\mu(4r)/4\ge 3\mu(8r)/4. \]
Then from the above two inequalities we arrive at
\[\omega(2r) \leq \big(1-2^{-s_1-2}\big)\omega(8r) +c\mu(r_0)\left(\frac{r}{r_0}\right)^\alpha,\]
where $c=c(n,g_0,L)>0$, $\alpha = 1/2$ when $0<g_0<1$ and $\alpha = 1/(1+g_0)$ when $g_0\geq 1$.
This shows that also in this case the lemma is true. 
Thus, the proof of the lemma follows from  
choice of $s = \max(0, s_0, s_1+2, \log_2 c)$. 
\end{proof}

\begin{proof}[Proof of Theorem \ref{thm:holder}]\noindent
\\
We first consider the apriori assumption \eqref{eq:ass} so that, equipped with this 
assumption, we have the above lemma, Lemma \ref{lem:hold}. Now, by an iteration on \eqref{mur4r}, it is easy to see that 
\begin{equation}\label{eq:muholder}
\om(r) \leq c\Big(\frac{r}{r_0}\Big)^\sigma \big[ \om(r_0/2) +\mu(r_0/2)\big]
\end{equation}
for some $\sigma=\sigma(n,g_0,L)\in(0,1),\ r\leq r_0/2$ and $c=c(n,g_0,L)>0$. Using  
\eqref{eq:muholder}, observe that 
\begin{equation}\label{hold1}
\begin{aligned}
\intav_{B_r}G(|X_l u - \{X_l u\}_{B_r}|)\dx \leq c\,G(\om_l(r))
&\leq c \,G\bigg(\Big(\frac{r}{r_0}\Big)^\sigma \big[ \om(r_0/2) +\mu(r_0/2)\big]
\bigg)\\
&\leq c\Big(\frac{r}{r_0}\Big)^\sigma \sup_{B_{r_0/2}}G(|\X u|) 
\end{aligned}
\end{equation}
where we have used \eqref{eq:jenap} for the first inequality and \eqref{eq:gG2} for the last inequality of the above. Hence 
from \eqref{eq:locbound}, we end up with 
\begin{equation}\label{hold2}
\intav_{B_r}G(|X_l u - \{X_l u\}_{B_r}|)\dx 
\leq c\Big(\frac{r}{r_0}\Big)^\sigma \intav_{B_{r_0}} G(|\X u|)\dx
\end{equation}
which gives us the estimate \eqref{eq:holder}. 

Now, to complete the proof, 
first we need to show that the estimate \eqref{hold2} is uniform, without the assumption 
\eqref{eq:ass}. This involves a standard approximation argument, using the 
following regularization, as constructed \cite{Muk0}; 
\begin{equation}\label{eq:regularization}
\F_\eps(t)=  \F\Big(\min\{\,t+\eps\,,\,1/\eps\,\}\Big) 
\quad\text{and}\quad 
\Aeps(z) =  \eta_\eps(|z|)\F_\eps(|z|)\,z + \Big(1-\eta_\eps(|z|)\Big)\A(z)
\end{equation}
where $0<\eps<1,\, \eta_\eps\in C^{\,0,1}([0,\infty))$ as in \cite{Muk0} and $\F(t)=g(t)/t$ for $g$ satisfying \eqref{eq:g prop} with $\delta>0$. 
Then, given $u\in HW^{1,G}(B_r)$
we consider $u_\eps$ that solves $\dvh (\A_\eps(\X u_\eps))=0$ and  
$u_\eps - u \in HW^{1,G}_0(B_r)$. We have $\A_\eps\to \A$ and $\F_\eps\to \F$ 
uniformly on compact subsets and $\F_\eps$ satisfies the assumption 
\eqref{eq:ass} with $m_1=\F(\eps)$ and $m_2=\F(1/\eps)$. Since the estimate 
\eqref{hold2} are independent of $m_1$ and $m_2$, hence the limit $\eps\to 0$ can be taken to obtain the uniform estimate, where the constant depends on 
$n,\delta,g_0,L$. 

Now, we show that the uniform estimate \eqref{hold2} implies that 
$X_lu$ is H\"older continuous for every $l\in\{1,\ldots,2n\}$. Using  
\eqref{eq:gG2} and Jensen's inequality on \eqref{hold2}, notice that 
\begin{equation}\label{hold3}
\begin{aligned}
\Big(\intav_{B_r}|X_l u &- \{X_l u\}_{B_r}|\dx\Big)
g\Big(\intav_{B_r}|X_l u - \{X_l u\}_{B_r}|\dx\Big)\\
&\leq (1+g_0)G \Big(\intav_{B_r}|X_l u - \{X_l u\}_{B_r}|\dx\Big)
\leq c\Big(\frac{r}{r_0}\Big)^\sigma \intav_{B_{r_0}} G(|\X u|)\dx
\end{aligned}
\end{equation}
for some $c=c(n,\delta,g_0,L)>0$. Now, observe that if 
$\intav_{B_r}|X_l u - \{X_l u\}_{B_r}|\dx\geq 1$ then, 
$$ \Big(\intav_{B_r}|X_l u- \{X_l u\}_{B_r}|\dx\Big)
g\Big(\intav_{B_r}|X_l u - \{X_l u\}_{B_r}|\dx\Big)
\geq g(1)\intav_{B_r}|X_l u - \{X_l u\}_{B_r}|\dx; $$
otherwise if $\intav_{B_r}|X_l u - \{X_l u\}_{B_r}|\dx\leq 1$, then 
from doubling condition
$$ \Big(\intav_{B_r}|X_l u- \{X_l u\}_{B_r}|\dx\Big)
g\Big(\intav_{B_r}|X_l u - \{X_l u\}_{B_r}|\dx\Big)
\geq g(1)\Big(\intav_{B_r}|X_l u - \{X_l u\}_{B_r}|\dx\Big)^{1+g_0}. $$
Notice that, both cases of the above when combined with \eqref{hold3}, 
yield
\begin{equation}\label{hold4}
\intav_{B_r}|X_l u- \{X_l u\}_{B_r}|\dx \leq C\Big(n,\delta,g_0,L,g(1), 
\|u\|_{HW^{1,G}(\Om)}\Big) 
\Big(\frac{r}{r_0}\Big)^\frac{\sigma}{1+g_0} 
\end{equation}
which implies that $X_lu\in \mathcal L^{1,Q+\sigma'}(B_r)$ and 
hence, recalling \eqref{eq:holdcamp}, $X_lu\in C^{\,0,\sigma'}(B_r)$ with $\sigma'=\sigma/(1+g_0)$ for some 
$\sigma=\sigma(n,g_0,L)\in(0,1)$. 
This completes the proof.
\end{proof}

\begin{Rem}\label{rem:camp}
Let $B_R\subset B_{R_0}\subset\subset\Om$ be concentric balls for 
$0<R<R_0$. 
As illustrated in the above proof, if $w\in HW^{1,G}(\Om)$ 
with $\|u\|_{HW^{1,G}(\Om)} \leq M$, satisfies the inequality 
$$ \int_{B_R}G(|\X w - \{\X w\}_{B_R}|)\dx 
\leq C (R/R_0)^\lambda $$ 
for some positive constants $C= C(n,\delta,g_0,R_0,M)>0$ and $\lambda\in(0,Q+1)$ with  $Q=2n+2$, then we have 
$\X w \in \mathcal L^{1,\lambda'}(B_R,\R^{2n})$; where if $\lambda \in (0,Q)$ 
then $\lambda' =\lambda$ and if $\lambda\in (Q,Q+1)$ then 
$\lambda' = Q+(\lambda-Q)/(1+g_0)$. This shall be used in the next 
section. 
\end{Rem}

\section{$C^{1,\alpha}$-regularity of weak solutions}\label{sec:c1alpha}
In this section, we prove Theorem \ref{thm:c1alpha}. In a fixed 
subdomain $\Om'$ compactly contained in $\Om$, we show 
that the weak solutions are locally $C^{1,\beta}$ in $\Om'$. 
The proof is 
standard, based on the results of the preceeding section and 
a Campanato type perturbation technique. Similar arguments in the 
Euclidean setting, can be found in \cite{Dib, Gia-Giu--div, Lieb--gen}, etc.
\subsection{The perturbation argument}\noindent
\\
Given $\Om'\subset\subset\Om$, we fix $x_0\in\Om'$ and a ball 
$B_R=B_R(x_0)\subset \Om'$ for $R\leq R_0= \frac{1}{2}\dist(\Om',\del\Om)$ and consider 
$u\in HW^{1,G}(B_R)\cap L^\infty(B_R)$ as weak solution of $\Q u=0$ in 
$B_R$, where $\Q$ is 
defined as in \eqref{eq:qop}. We recall the structure conditions for Theorem \ref{thm:c1alpha}, as follows;
\begin{align}
 \label{DA} &\frac{g(|p|)}{|p|}\,|\xi|^2
 \leq \,\inp{D_p\,A(x,z,p)\,\xi}{\xi}\,\leq L\,\frac{g(|p|)}{|p|}\,|\xi|^2;\\
 \label{A0} &|A(x,z,p)-A(y,w,p)|\,\leq
 \,L'\big(1+g(|p|)\big)\Big( |x-y|^\alpha + |z-w|^\alpha\Big);\\
 \label{B0} &|B(x,z,p)|\,\leq\, L'\big(1+g(|p|)\big)|p|
\end{align}
for all $(x,z,p)\in\Om\times \R\times \R^{2n}$ and the matrix 
$D_p\,A(x,z,p)$ is symmetric. 
In addition, we recall the hypothesis of 
Theorem \ref{thm:c1alpha} that, there exists $M_0>0$ such that $\|u\|\leq M_0$ in 
$\Om'$. 

From structure condition \eqref{DA}, it is not difficult to check that $A(x,z,p)$ satisfies conditions 
reminiscent of \eqref{A1} and \eqref{A2}; the condition on variable $z$ for \eqref{A1} and \eqref{A2} are absolved in the constants $L$ and $L'$, since 
the solution $u$ is bounded.  However, the condition \eqref{B0} on $B$ is more 
relaxed than \eqref{B} and \eqref{B'}, which is necessary for $C^{1,\beta}$-regularity. 

Thus, this allows us to apply Theorem \ref{thm:c0alpha} 
and conclude $u$ is H\"older continuous with 
\begin{equation}\label{oscu}
\osc_{B_R} u \leq \theta(R) = \gamma R^\tau
\end{equation}
for some constant $\gamma=\gamma(M_0,\dist(\Om',\del\Om))>0$ and 
$\tau\in(0,1)$ can be chosen to be as small as required. Here onwards, we   
suppress the dependence of the data $n,\delta,g_0,\alpha, L,L',\dist(\Om',\del\Om)$; all positive constants depending on these shall be denoted as $c$ and the constants dependent further on $g(1), M_0$ in addition,  
shall be denoted as $C$ throughout this subsection.

The proof of Theorem \ref{thm:c1alpha} involves a standard freezing technique as followed previously in the Euclidean setting in \cite{Lieb--gen}. However, the integral oscillation estimate \eqref{eq:holder} of the previous section is weaker than that of the Euclidean setting (see \cite[(5.3b) of p. 339]{Lieb--gen}, which is, in fact, false in the setting of $\hh^n$ due to the non-zero terms from the commutators). Therefore, \cite{Lieb--gen} can not be followed entirely. For the present case, the proof is carried out with an extra step in the argument, using Lemma \ref{lem:intosc} and Proposition \ref{prop:Aml} below. 

Let us denote $\A:\R^{2n}\to \R^{2n}$ as
\begin{equation}\label{eq:Apert}
 \A(p) = A(x_0,u(x_0),p),
\end{equation}
so that from \eqref{DA}, $\A$ satisfies the structure condition 
\eqref{eq:str cond diff} and hence also the monotonicity and ellipticity conditions \eqref{eq:monotone} and \eqref{eq:elliptic} (with possible dependence on $g_0$ and $\delta$). Hence, for the problem
\begin{equation}\label{eq:probpert}
 \begin{cases}
  \dvh (\A(\X \tu))=  0\ \ \text{in}\ B_R;\\
 \ \tu - u\in HW^{1,G}_0(B_R). 
 \end{cases}
\end{equation}
we can use the monotonicity inequalities and uniform estimates from Section \ref{sec:LocalH}. 
\begin{Lem}\label{lem:pert}
 If $u\in HW^{1,G}(B_R)\cap C(\bar B_R)$ is given, then there exists a unique weak 
 solution $\tu\in HW^{1,G}(B_R)\cap C(\bar B_R)$ for the problem 
 \eqref{eq:probpert}, which satisfies the following:
 \begin{align}
  \label{pert1}(i)&\ \sup_{B_R}|u-\tu|\leq \osc_{B_R} u\ ;\\
  \label{pert2}(ii)&\ \int_{B_R} G(|\X\tu|)\dx 
  \leq c\int_{B_R} G(|\X u|)\dx.
 \end{align}
\end{Lem}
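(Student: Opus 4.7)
The plan is to handle the four assertions in the order: existence and uniqueness, interior/boundary continuity, the pointwise bound (i), and the energy estimate (ii). Because $\A$ in \eqref{eq:Apert} is the freezing of $A(\cdot,\cdot,p)$ at $(x_0,u(x_0))$, condition \eqref{DA} passes directly to $\A$ as the ellipticity/growth structure \eqref{eq:str cond diff}, so every monotonicity and regularity tool from Section \ref{sec:LocalH} applies. In particular, \eqref{eq:monotone} gives strict monotonicity and \eqref{eq:elliptic} coercivity in the affine space $u + HW^{1,G}_0(B_R)$, and hemicontinuity is immediate from the $C^1$ assumption on $\A$. Since $G$ satisfies the $\Delta_2$-condition (and so does its conjugate $G^*$), the Orlicz–Sobolev space $HW^{1,G}_0(B_R)$ is reflexive; therefore the Browder–Minty theorem produces a unique weak solution $\tu \in u + HW^{1,G}_0(B_R)$ of \eqref{eq:probpert}.

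For continuity of $\tu$, note that the equation $\dvh(\A(\X\tu))=0$ fits the framework of Section \ref{sec:Holder} with $B\equiv 0$ and $\chi=0$, so Theorem \ref{thm:c0alpha} gives interior H\"older continuity. Boundary continuity on the smooth domain $B_R$ follows from the classical barrier construction combined with the oscillation decay \eqref{eq:oscest}, since the boundary trace coincides with $u|_{\partial B_R} \in C(\partial B_R)$; this yields $\tu\in C(\bar B_R)$.

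For (i) I would use a comparison argument. Let $M=\sup_{\bar B_R}u$ and $m=\inf_{\bar B_R}u$. Testing the weak formulation of $\dvh(\A(\X\tu))=0$ against $(\tu-M)^+\in HW^{1,G}_0(B_R)$ and invoking the ellipticity in \eqref{eq:str cond diff} forces $(\tu-M)^+\equiv 0$, i.e.\ $\tu\leq M$ in $B_R$; the symmetric argument with $(m-\tu)^+$ gives $\tu\geq m$. Since $u$ itself lies in $[m,M]$, we conclude $|u-\tu|\leq M-m=\osc_{B_R}u$.

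For (ii), which I expect to be the most delicate point, test the equation with $\varphi=\tu-u\in HW^{1,G}_0(B_R)$ to obtain
\begin{equation*}
\int_{B_R}\inp{\A(\X\tu)}{\X\tu}\dx \,=\, \int_{B_R}\inp{\A(\X\tu)}{\X u}\dx.
\end{equation*}
The left-hand side is bounded below by $c(g_0)\int_{B_R}G(|\X\tu|)\dx$ thanks to \eqref{eq:elliptic}, while on the right the growth bound $|\A(\X\tu)|\leq Lg(|\X\tu|)$ together with Young's inequality \eqref{eq:YI} for the complementary pair $(G,G^*)$ gives
\begin{equation*}
Lg(|\X\tu|)|\X u| \,\leq\, G^*\!\bigl(\eps L\,g(|\X\tu|)\bigr) + G(|\X u|/\eps).
\end{equation*}
The doubling of $G^*$ (a consequence of \eqref{eq:g prop} with $\delta>0$) and the identity $G^*(g(t))=tg(t)-G(t)\leq g_0 G(t)$ from \eqref{eq:gG2'} yield $G^*(\eps L\,g(t))\leq C(g_0,L)\eps^{1+\delta'}G(t)$ for a suitable $\delta'>0$. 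Choosing $\eps$ small enough to absorb this term into the left-hand side and using the doubling of $G$ on $G(|\X u|/\eps)$, one reaches (ii) with $c=c(n,\delta,g_0,L)$. The only technicality is keeping track of the doubling constants in the Young-type inequality for $(G,G^*)$, which is purely a property of $g$ via \eqref{eq:g prop}.
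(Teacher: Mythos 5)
Your proposal is correct and reaches the same conclusions, but the argument for the energy estimate (ii) takes a genuinely different route from the paper. Where you invoke Young's inequality for the complementary pair $(G,G^*)$ — writing $Lg(|\X\tu|)\,|\X u| \le G^*(\eps L\,g(|\X\tu|)) + G(|\X u|/\eps)$, using the Legendre identity $G^*(g(t)) = tg(t) - G(t) \le g_0\,G(t)$ together with the power-decay $G^*(\alpha s) \le \alpha^{1+1/g_0}G^*(s)$ for $\alpha<1$ to get $G^*(\eps L g(t)) \le C(g_0,L)\,\eps^{1+1/g_0}G(t)$, and then absorbing — the paper avoids the conjugate function entirely. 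Instead it picks $k=k(\delta,g_0,L)$ large so that $\inp{\A(p)}{p}\ge (2/k)|p|\,|\A(p)|$, and splits the integral $\int\inp{\A(\X\tu)}{\X u}$ over the two regions $\{|\X\tu|\ge k|\X u|\}$ and its complement: on the first region $|\X u|\le |\X\tu|/k$ turns the integrand into $\frac{1}{k}|\A(\X\tu)||\X\tu|\le\frac12\inp{\A(\X\tu)}{\X\tu}$, while on the second $|\X\tu|<k|\X u|$ and monotonicity/doubling of $g$ give $|\A(\X\tu)|\,|\X u|\le Lk^{g_0}\,g(|\X u|)|\X u|$. Both paths are standard and correct; yours is perhaps more idiomatic in Orlicz-space theory, at the cost of needing the conjugate $G^*$ and its sub-power growth (which does rely on both $\delta>0$ and $g_0<\infty$, as you noticed), while the paper's domain-splitting is more self-contained and only uses \eqref{eq:elliptic}, boundedness of $\A$, and the doubling estimate \eqref{eq:gdoub}.

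For (i), you and the paper make essentially the same comparison argument; you test with $(\tu-\sup_{\bar B_R}u)^+$ where the paper uses $(\tu-\sup_{\partial B_R}u)^+$, but both land $u$ and $\tu$ in the same interval $[\inf_{\bar B_R}u,\,\sup_{\bar B_R}u]$ and the conclusion is identical. Your remarks on existence/uniqueness via Browder–Minty and on interior/boundary continuity supply detail that the paper delegates to the reference \cite{Muk}; this is fine and consistent with what the paper intends. One small imprecision: in (ii) what is actually used about $G^*$ is the sub-power estimate $G^*(\alpha s)\le \alpha^{1+1/g_0}G^*(s)$ for $\alpha<1$ (controlled by the upper bound $g_0$ in \eqref{eq:g prop}), not merely the $\Delta_2$-doubling that $\delta>0$ provides; both properties come from \eqref{eq:g prop}, so the argument is sound, but the parenthetical attribution slightly undersells which inequality is being applied.
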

\begin{proof}
 Existence and uniqueness is standard from monotonicity of $\A$, we refer to   \cite{Muk0} for more details. Also, \eqref{pert1} follows easily 
 from Comparison principle and the fact that 
 $$\inf_{\del B_R}u\leq \tu\leq \sup_{\del B_R}u\quad \text{in}\ B_R, $$ 
which is easy to show by considering $\varphi= (\tu-\sup_{\del B_R}u)^+$ (and 
similarly the other case) as a test function for \eqref{eq:probpert}, see Lemma 5.1 in \cite{Dib}. 

The proof of \eqref{pert2} is also standard. Using test function
$\varphi= \tu -u$ on \eqref{eq:probpert}, we get 
\begin{equation}\label{eq:el1}
 \int_{B_R}\inp{\A(\X \tu)}{\X \tu}\dx 
=  \int_{B_R}\inp{\A(\X \tu)}{\X u}\dx.
\end{equation} 
Now we choose $k=k(\delta, g_0,L)>0$ such that combining ellipticity \eqref{eq:elliptic} and boundedness of $\A$, we have 
$\inp{\A(p)}{p}\geq (2/k) |p||\A(p)|$. Hence, we obtain 
\begin{equation*}
  \begin{aligned}
   \int_{B_R}\inp{\A(\X \tu)}{\X u}\dx 
   &\leq \frac{1}{k}\int_{|\X \tu|\geq k|\X u|} |\A(\X \tu)||\X \tu|\dx 
   \,+\, \int_{|\X \tu|< k|\X u|} |\A(\X \tu)||\X u|\dx\\
   &\leq \frac{1}{2}\int_{B_R}\inp{\A(\X \tu)}{\X \tu}\dx 
   \,+\,  k^{g_0}c\int_{B_R}g(|\X u|)\,|\X u|\dx.
  \end{aligned}
 \end{equation*}
which combined with \eqref{eq:el1} and the ellipticity \eqref{eq:elliptic}, concludes 
the proof. 
\end{proof}

We require the following comparison lemma. 
\begin{Lem}\label{lem:intosc}
There exists 
$\sigma=\sigma(n,g_0,L)\in(0,1)$ and $c=c(n,g_0,\delta, L)>0$ such that, for every 
$0<\varrho < R/2 $, the following estimate holds:
\begin{equation*}
\begin{aligned}
\intav_{B_\varrho}G(|\X u - \{\X u\}_{B_\varrho}|)\dx &\,\leq\, c\Big(\frac{\varrho}{R}\Big)^\sigma  \intav_{B_{ R}}G(|\X u|)\dx + c\Big(\frac{R}{\varrho}\Big)^Q \intav_{B_R} G(|\X u-\X\tu|)\dx   .
\end{aligned}
\end{equation*}
\end{Lem}
\begin{proof}
From \eqref{eq:jenap} and triangle inequality, we have 
\begin{equation}\label{eq:io1}
\begin{aligned}
\intav_{B_\varrho} G(|\X u - \{\X u\}_{B_\varrho}|)\dx
\leq c\intav_{B_\varrho}G(|\X \tilde u - \{\X \tilde u\}_{B_\varrho}|)\dx 
+c\intav_{B_\varrho}G(|\X u - \X \tilde u|)\dx . 
\end{aligned}
\end{equation}
Now, we shall estimate both terms of the right hand side of \eqref{eq:io1} 
seperately. 

The Theorem \ref{thm:holder} being proved in the previous section, we estimate 
the first term of \eqref{eq:io1} using the estimate \eqref{eq:holder} for $\X\tilde u$ as 
\begin{equation*}
\begin{aligned}
\intav_{B_\varrho}G(|\X \tilde u - \{\X \tilde u\}_{B_\varrho}|)\dx &\leq c\Big(\frac{\varrho}{R}\Big)^\sigma 
\intav_{B_{R}}G(|\X \tilde u|)\dx \\
&\leq  c\Big(\frac{\varrho}{R}\Big)^\sigma \intav_{B_{R}}G(|\X u|)\dx  
+  c\Big(\frac{\varrho}{R}\Big)^\sigma \intav_{B_{R}}G(|\X \tilde u-\X u|)\dx 
\end{aligned}
\end{equation*}
 
The second term of \eqref{eq:io1} is estimated simply as 
\begin{equation*}
\intav_{B_\varrho}G(|\X u - \X \tilde u|)\dx \leq 
c\Big(\frac{R}{\varrho}\Big)^Q\intav_{B_R}G(|\X u - \X \tilde u|)\dx,  
\end{equation*}
and combining estimates of both terms of \eqref{eq:io1}, 
the proof is finished.
\end{proof}

To proceed with the proof of Theorem \ref{thm:c1alpha}, we shall need the 
following technical lemma which is a variant of a lemma of 
Campanato \cite{Camp}. This is a fundamental lemma which has been extensively used in the literature. 
We refer to \cite{Gia-Giu--sharp} or 
{\cite[Lemma 2.1]{Gia}} for a proof.
 \begin{Lem}\label{lem:camp}
 Let $\phi :(0,\infty)\to [0,\infty)$ be a non-decreasing function and $A,B > 1, \alpha >0 $ be fixed constants.  
 Suppose that for any $ \rho < r\leq R_0$ and $\epsilon>0$, we have 
 $$ \phi(\rho)  \leq A\left[\left(\frac{\rho}{r}\right)^\alpha + \kappa\right]\phi(r)  + Br^{\alpha-\epsilon};$$ 
 then there exists a constant $\kappa_0=\kappa_0(\alpha,A,B)>0$ such that if $ \kappa < \kappa_0$, we have
 $$ \phi(\rho)  \leq c\left(\frac{\rho}{r}\right)^{\alpha-\epsilon}\big[\phi(r) + 
  Br^{\alpha-\epsilon}\big] $$
  for all $\rho<r\leq R_0$, where $c=c(\alpha,\epsilon,A)>0$ is a constant.
\end{Lem}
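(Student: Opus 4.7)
The plan is to prove Lemma \ref{lem:camp} by the classical dyadic-iteration argument. I would first establish a one-step contraction along a geometric sequence of radii, then iterate, and finally bridge from discrete scales to all intermediate scales using monotonicity of $\phi$.

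More concretely, write $\beta=\alpha-\epsilon$ for brevity. First I would choose $\theta\in(0,1)$ small enough that $A\theta^\alpha\le \tfrac{1}{2}\theta^{\beta}$; since $\alpha>\beta$, this amounts to $\theta^{\epsilon}\le 1/(2A)$, so e.g.\ $\theta=(2A)^{-1/\epsilon}$ works. Then I would set $\kappa_0=\theta^{\beta}/(4A)$ so that for every $\kappa<\kappa_0$ the hypothesis, applied at $\rho=\theta r$, yields
\[
 \phi(\theta r)\ \le\ A\bigl[\theta^\alpha+\kappa\bigr]\phi(r)+Br^\beta\ \le\ q\,\phi(r)+Br^\beta,
\qquad q:=\tfrac{3}{4}\theta^{\beta}<\theta^{\beta}.
\]
The strict inequality $q<\theta^\beta$ is the crux: it is exactly what will let me sum a geometric series and avoid the spurious factor of $k$ that the naïve iteration $q=\theta^\beta$ would produce.

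Next I would iterate on the geometric grid $r_k=\theta^k r$, obtaining by induction
\[
 \phi(\theta^k r)\ \le\ q^k\phi(r)+Br^{\beta}\sum_{j=0}^{k-1}q^{k-1-j}\theta^{j\beta}.
\]
Because $\theta^\beta/q=4/3>1$, the sum is a geometric progression whose largest term is the last one, and it is bounded by a multiple of $\theta^{k\beta}$, independent of $k$. Combined with $q^k\le\theta^{k\beta}$, this gives the scale-invariant estimate
\[
 \phi(\theta^k r)\ \le\ c\,\theta^{k\beta}\bigl[\phi(r)+Br^{\beta}\bigr],
\]
with $c=c(\alpha,\epsilon,A)$. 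Finally, for an arbitrary $\rho\in(0,r]$ I would pick the unique integer $k\ge 0$ with $\theta^{k+1}r\le\rho<\theta^k r$; monotonicity of $\phi$ gives $\phi(\rho)\le\phi(\theta^k r)$, while $\theta^k=\theta^{k+1}/\theta\le \rho/(\theta r)$ gives $\theta^{k\beta}\le \theta^{-\beta}(\rho/r)^\beta$. Substituting and absorbing $\theta^{-\beta}$ into $c$ yields the claimed inequality $\phi(\rho)\le c(\rho/r)^{\beta}[\phi(r)+Br^{\beta}]$ for all $\rho<r\le R_0$.

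There is no serious obstacle here; the entire delicacy lies in the choice of the contraction exponent. If one naïvely takes $A\theta^\alpha=\theta^{\beta}$ and absorbs $A\kappa$ into the same bound, iteration produces a divergent factor $k$ in front of $(\theta^k r)^\beta$. The fix is precisely to leave a fixed gap between $A\theta^\alpha+A\kappa$ and $\theta^\beta$, so that the geometric sum in the second display converges to a geometric-in-$k$ quantity rather than an arithmetic-in-$k$ one. This is why the $\epsilon$-loss in the exponent is genuinely used, and why the threshold $\kappa_0$ must depend on $\alpha$ (through $\theta$ and $\beta$) and $A$.
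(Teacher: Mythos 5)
Your argument is correct and is precisely the standard Campanato--Giaquinta iteration; the paper itself gives no proof of this lemma, referring instead to \cite{Gia-Giu--sharp} and \cite[Lemma 2.1]{Gia}, where the same one-step contraction on a geometric grid, geometric-series summation, and monotonicity bridge appear. One small point worth noting: your threshold $\kappa_0 = \theta^{\beta}/(4A)$ with $\theta = (2A)^{-1/\epsilon}$ inevitably depends on $\epsilon$ (since as $\epsilon\to 0$ the contraction parameter $\theta$ must degenerate), whereas the lemma as stated writes $\kappa_0=\kappa_0(\alpha,A,B)$; this dependence on $\epsilon$ is genuinely unavoidable and present in the cited sources as well, so the stated dependence is a minor imprecision in the paper rather than a defect in your proof --- and it is harmless downstream, since the lemma is invoked in the proof of Theorem \ref{thm:c1alpha} with $\kappa=0$, where the threshold never enters.
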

The proof of Theorem \ref{thm:c1alpha} requires the following intermediary step, which can be regarded as an almost-Lipschitz estimate. This is a consequence of the uniform Lipschitz estimate of $\tilde u$ from \cite{Muk0} and the above perturbation lemma. 
\begin{Prop}\label{prop:Aml}
Let $u\in HW^{1,G}(\Om)$ be a weak solution of $\Q u=0$, then for any $0<\eps<1$ and  $0<r\leq R\leq R_0/2$, we have 
$\X u\in \mathcal L^{1,Q-\eps}_\loc (\Om)$ and 
\begin{equation}\label{eq:Aml}
\int_{B_r}G(|\X u|)\dx\leq c \left(\frac{r}{R}\right)^{Q-\eps}
\bigg[\int_{B_R}G(|\X u|)\dx + CR^{Q-\eps}\bigg]. 
\end{equation}
\end{Prop}
\begin{proof}
 For $B_R\subset \Om'\subset\subset \Om$, we have $\|u\|\leq M_0$ in $\bar B_R$ and up to a representative we 
can regard that $u\in HW^{1,G}(B_R)\cap C(\bar B_R)$. Let us denote 
\begin{equation}\label{eq:Ipert}
I = \int_{B_R}\inp{\A(\X u)}{(\X u -\X \tu)} \dx,
\end{equation}
where $\A$ is as in \eqref{eq:Apert} and $\tu\in HW^{1,G}(B_R)\cap C(\bar B_R)$ is the weak solution of \eqref{eq:probpert}. Since $u = \tu$ in $\del B_R$, the function $u-\tu$ can be used to test the equations satisfied by $u$ and 
$\tu$, which shall be used to estimate $I$ to obtain both lower and upper bounds. 

First, using $u-\tu$ as test function for $\Q u = 0$, we obtain 
\begin{equation}\label{pI1}
\begin{aligned}
I &=  \int_{B_R}\inp{A(x_0,u(x_0),\Xu)-A(x,u,\Xu)}{(\Xu-\X \tu)}\dx \\
  &\qquad\qquad +\int_{B_R}B(x,u,\Xu)(u-\tu)\dx\\
  &\leq c\big(R^\alpha+\theta(R)^\alpha \big)
  \int_{B_R} g(1+|\X u|)|\X u -\X \tu|\dx\\
  &\qquad\qquad + c\,\theta(R)\int_{B_R} g(1+|\X u|)|\X u|\dx
\end{aligned}
\end{equation}
with $\theta(R)$ as in \eqref{oscu}, 
where we have used structure condition \eqref{A0} and \eqref{B0} for the first term and \eqref{pert1} for the second term of the right hand side of \eqref{pI1}. Now we 
use \eqref{eq:gG5} of Lemma \ref{lem:gandG prop} and \eqref{pert2} of 
Lemma \ref{lem:pert} to estimate the first term of the above and obtain 
that 
\begin{equation}\label{Iless}
I \leq c\,\theta(R)^\alpha \int_{B_R} G(1+|\X u|)\dx.
\end{equation}

Secondly, to obtain the upper bound for $I$, we shall use the monotonicity 
inequality \eqref{eq:monotone}. Let us denote 
$S_1=\set{x\in B_R}{|\X u-\X\tu|\leq 2|\X u|}$ and 
$S_2=\set{x\in B_R}{|\X u-\X\tu|> 2|\X u|}$. Taking 
$u-\tu$ as test function for \eqref{eq:probpert} and using \eqref{eq:monotone}, we obtain 
\begin{equation}\label{Igrt}
\begin{aligned}
I &= \int_{B_R}\inp{\A(\X u)-\A(\X\tu)}{(\X u-\X\tu)}\dx\\
&\geq c\int_{S_1}\F(|\X u|)|\X u-\X\tu|^2\dx +c\int_{S_2}
\F(|\X u - \X\tu|)|\X u-\X\tu|^2\dx
\end{aligned}
\end{equation}
Recalling $G(t)\leq t^2\F(t)$ from \eqref{eq:gG2}, we have from 
\eqref{Iless} and \eqref{Igrt}, 
that 
\begin{equation}\label{s2}
\int_{S_2} G(|\X u-\X\tu|)\dx \leq c\,\theta(R)^\alpha \int_{B_R} G(1+|\X u|)\dx.
\end{equation}
Now since $|\X u-\X\tu|\leq 2|\X u|$ in $S_1$ by definition, we obtain the following from 
\eqref{eq:gG2}, monotonicity of $g$ and H\"older's inequality; 
\begin{equation}\label{s1}
 \begin{aligned}
  \int_{S_1}G(|\Xu-\X \tu|)\dx &\leq c\,\Big(\int_{S_1}\F(|\Xu|)|\Xu-\X \tu|^2\dx\Big)^\frac{1}{2}
  \Big(\int_{S_1}G(|\Xu|)\dx\Big)^\frac{1}{2}\\
  &\leq c\,\theta(R)^{\alpha/2}\int_{B_R}G(1+|\Xu|)\dx
 \end{aligned}
\end{equation}
where the latter inequality of the above follows from \eqref{Iless} and \eqref{Igrt}. Now, we add \eqref{s2} and \eqref{s1} to obtain the estimate 
of the integral over whole of $B_R$,
\begin{equation}\label{Gineq1}
\int_{B_R} G(|\X u-\X\tu|)\dx \leq c\,\theta(R)^{\alpha/2} \int_{B_R} G(1+|\X u|)\dx.
\end{equation}
Recalling \eqref{eq:locbound} and \eqref{pert2}, note that for any 
$0<r\leq R/2$, we have 
$$ \int_{B_r} G(|\X \tu|) \dx 
\leq r^Q \sup_{B_{R/2}} G(|\X \tu|) \leq c\Big(\frac{r}{R}\Big)^Q 
\int_{B_R} G(|\X \tu|)\dx \leq c\Big(\frac{r}{R}\Big)^Q 
\int_{B_R} G(|\X u|)\dx.$$
where $Q=2n+2$. 
Combining the above with \eqref{Gineq1}, we obtain 
\begin{equation}\label{campform}
 \int_{B_r} G(|\X u|) \dx \,\leq\, c\left(\frac{r}{R}\right)^Q \int_{B_R} G(|\X u|) \dx + c\,\theta(R)^{\alpha/2} \int_{B_R} G(1+|\X u|)\dx. 
\end{equation}
Now, we follow the bootstrap technique of Giaquinta-Giusti \cite{Gia-Giu--div}. 

For $0<\rho\leq R_0$, let us denote
$ \Phi(\rho) =  \int_{B_\rho} G(|\X u|) \dx $, so that we rewrite \eqref{campform} as 
\begin{equation}\label{cam}
\Phi(r) \leq c\left(\frac{r}{R}\right)^Q \Phi(R) 
+ cR^\vartheta \int_{B_R} G(1+|\X u|)\dx
\end{equation}
where $\vartheta = \tau\alpha/2$ with $\tau\in(0,1)$ as in \eqref{oscu}. We 
proceed by induction, with the hyposthesis 
\begin{equation}\label{hypk}
\int_{B_R} G(1+|\X u|)\dx \leq CR^{(k-1)\vartheta} 
\quad \text{for some}\ k\in\N,\ k\vartheta <Q.
\end{equation}
The hypothesis clearly holds for $k=1$. Assuming the hypothesis \eqref{hypk} holds for some $k\in\N$, first notice that by virtue of \eqref{eq:jenap}, we have 
$$ \int_{B_R} G(|\X u -\{\X u\}_{B_R}|)\dx \leq CR^{(k-1)\vartheta}  $$
which further implies that $\X u\in \mathcal L^{1,(k-1)\vartheta}(\Om',\R^{2n})$, 
see Remark \ref{rem:camp}. Now using \eqref{hypk} in \eqref{cam}, 
we apply Lemma \ref{lem:camp} to obtain that 
$$ \Phi(R) \leq c\left(\frac{R}{R_0}\right)^{k\vartheta}\big[ \Phi(R_0) + C\big],  $$
which, from definition of $\Phi$, implies the hypothesis \eqref{hypk} for 
$k+1$ and $\X u\in \mathcal L^{1,k\vartheta}(\Om',\R^{2n})$ from Remark \ref{rem:camp}. We choose 
can choose $\vartheta$ small enough and carry on a finite induction 
for $k=0,1,\ldots m$ where $m$ is chosen such that 
$m\vartheta <Q \leq (m+1)\vartheta <Q+1$. Thus,  
after the last induction step, we conclude that
$$ \Phi(r) \leq  c\left(\frac{r}{R}\right)^{m\vartheta}\big[ \Phi(R) + CR^{m\vartheta}\big]  $$
and $\X u\in \mathcal L^{1,m\vartheta}(\Om',\R^{2n})$. Given any $0<\eps<1$, we can choose $\vartheta$ small enough such that 
$Q-\eps\leq m\vartheta$ and the proof is finished.
\end{proof}
Furthermore, using the estimate \eqref{Gineq1} above together with Lemma \ref{lem:intosc}, we can get 
\begin{equation}\label{eq:hold0}
\begin{aligned}
\int_{B_\varrho}G(|\X u - \{\X u\}_{B_\varrho}|)\dx &\leq c\Big(\frac{\varrho}{R}\Big)^{Q+\sigma}\int_{B_{R}}G(|\X u|)\dx 
+ c \int_{B_R} G(|\X u-\X\tu|)\dx\\
&\leq c\Big(\frac{\varrho}{R}\Big)^{Q+\sigma}\int_{B_{R}}G(|\X u|)\dx 
+  cR^\theta \int_{B_R} G(1+|\X u|)\dx. 
\end{aligned}
\end{equation}
where $\theta>0$ is chosen similarly as in the above proof, which can be made small enough. This shall be required to prove Theorem \ref{thm:c1alpha} along with the estimate of Proposition \ref{prop:Aml}. 

\begin{proof}[Proof of Theorem \ref{thm:c1alpha}]\noindent

Let us take $0<\varrho \leq r <R_0/2$ and we rewrite \eqref{eq:hold0} as  
\begin{equation}\label{eq:hold1}
\begin{aligned}
\int_{B_\varrho}G(|\X u - \{\X u\}_{B_\varrho}|)\dx \leq c\Big(\frac{\varrho}{r}\Big)^{Q+\sigma}\int_{B_{r}}G(|\X u|)\dx 
+  c\,r^\theta \int_{B_r} G(1+|\X u|)\dx.
\end{aligned}
\end{equation}
From \eqref{eq:Aml} of Proposition \ref{prop:Aml} we have, 
\begin{equation}\label{eq:Aml2}
\int_{B_r}G(|\X u|)\dx\leq c \left(\frac{r}{R_0}\right)^{Q-\eps}
\bigg[\int_{B_{R_0}}G(|\X u|)\dx + CR_0^{Q-\eps}\bigg]. 
\end{equation}
Using \eqref{eq:Aml2} on \eqref{eq:hold1} to obtain the following estimate, 
\begin{equation}\label{hl1}
\begin{aligned}
\int_{B_\varrho}G(|\X u - \{\X u\}_{B_\varrho}|)\dx &\leq\, c 
\Big(\frac{\varrho^{Q+\sigma}}{r^{\sigma+\eps} R_0^{Q-\eps}}\Big)
\bigg[\int_{B_{R_0}}G(|\X u|)\dx + C R_0^{Q-\eps}\bigg] + Cr^{Q+\theta-\eps} \\
&\leq C\Big(\frac{\varrho^{Q+\sigma}}{r^{\sigma+\eps}} + r^{Q+\theta_0}\Big)
\end{aligned}
\end{equation}
where $\theta_0=\theta-\eps>0$ is chosen with a choice of a small enough $\eps>0$. For some $0<\kappa<1$ we rewrite the above with the choice $r= \varrho^\kappa$ to have 
\begin{equation}\label{hl2}
\begin{aligned}
\int_{B_\varrho}G(|\X u - \{\X u\}_{B_\varrho}|)\dx \leq C\big(\varrho^{Q+(1-\kappa)\sigma-\kappa\eps} + \varrho^{\kappa(Q+\theta_0)}\big)
\leq C\varrho^{Q+\gamma},
\end{aligned}
\end{equation}
where the latter inequality follows when $Q+\gamma \leq \min\{Q+(1-\kappa)\sigma-\kappa\eps\,,\, \kappa(Q+\theta_0)\}$; 
indeed we can make sure that this is true with the choice 
of $\kappa = \kappa(\gamma)$ such that 
$$ \frac{Q+\gamma}{Q+\theta_0} \,\leq\, \kappa \,\leq\, \frac{\sigma-\gamma}{\sigma+\eps},$$
for any $0<\gamma< (\sigma\theta_0-Q\eps)/(Q+\sigma+\theta_0 +\eps)$, where $0<\eps<\sigma\theta_0/Q$. Furthermore, having $\gamma, \eps$ small enough, $\kappa=\kappa(\gamma)$ can be chosen close enough to $1$ and therefore, we can make sure $\varrho^\kappa< R_0/2$, whenever $0<\varrho<R_0/2$. 
Thus, from \eqref{hl2} we have obtained 
$$ \intav_{B_\varrho}G(|\X u - \{\X u\}_{B_\varrho}|)\dx \leq C\varrho^\gamma, $$ 
for any $0<\varrho<R_0/2$, which imlpies $\X u \in C^{0,\beta}(\Omega',\R^{2n})$ with $\beta=\gamma/(1+g_0)$ from Remark \ref{rem:camp} and the proof is finished. 
\end{proof}

\subsection{Concluding Remarks}\label{remarks}\noindent
\\
Here we discuss some possible extensions of the structure conditions that can be included and results similar to the above can be obtained with minor 
modifications of the arguments.
\begin{enumerate}
\item Any dependence of $x$ in structure conditions for $A(x,z,p)$ and $B(x,z,p)$ has been suppressed so far, for sake of simplicity. However, we remark that 
for some given non-negative measurable functions $a_1,a_2,a_4,a_5,b_1,b_2$, 
the structure condition
\begin{equation*}
\begin{aligned}
 &\inp{A(x,z,p)}{p}\geq |p|g(|p|)
 -a_1(x)\,g\bigg(\frac{|z|}{R}\bigg)\frac{|z|}{R}-a_2(x);\\
   &|A(x,z,p)| \leq a_3\, g(|p|) + a_4(x) \,g\bigg(\frac{|z|}{R}\bigg)
   +a_5(x) ;\\
    &|B(x,z,p)|\leq \frac{1}{R}
  \bigg[b_0\,g(|p|) + b_1(x)\,g\bigg(\frac{|z|}{R}\bigg) + b_2(x) \bigg],
\end{aligned}
\end{equation*}
can also be considered for obtaining the Harnack inequalities. In this case, we 
would require $a_1,a_2,a_4,a_5,b_1,b_2\in L^q_\loc(\Om)$ for some $q>Q$. 
Similar arguments can be carried out with a choice of $\chi>0$, such that 
$\|a_5\|_{L^q(B_R)}+\|b_2\|_{L^q(B_R)}\leq g(\chi)$ and 
$\|a_2\|_{L^q(B_R)}\leq g(\chi)\chi$. We refer to \cite{Lieb--gen} and 
\cite{C-D-G} for more details of such cases. \\

\item The function $g(t)/t$ in the growth conditions can be replaced by 
$f(t)$, where $f$ is a continuous doubling positive function on 
$(0,\infty)$ and $t\mapsto f(t)t^{1-\delta}$ 
is non-decreasing. A $C^1$-function $\tilde g$ can be found 
satisfying \eqref{eq:g prop} and $\tilde g(t) \sim tf(t)$(see 
{\cite[Lemma 1.6]{Lieb--gen}}), which is sufficient 
to carry out all of the above arguments.  
\end{enumerate}

\section*{Appendix I}
\begin{proof}[Proof of Lemma \ref{lem:start}]
Fix $l\in\{ 1,2,...,n\}$ and $\beta\ge 0$. Let $\eta\in C^\infty_0(\Omega)$ be a non-negative cut-off function. 
Using 
$
\varphi=\eta^{\beta+2}v^{\beta+2} |\X u|^2 X_lu
$
as a test-function in equation \eqref{eq:Xu}, we get
\begin{equation}\label{eq1}
\begin{aligned}
\int_\Omega &\sum_{i,j=1}^{2n} \eta^{\beta+2}v^{\beta+2}D_j\A_i(\X u)X_jX_iuX_i\big( |\X u|^2 X_lu\big)\, dx\\
= & -(\beta+2)\int_\Omega\sum_{i,j=1}^{2n} \eta^{\beta+1} v^{\beta+2}|\X u|^2 X_lu D_j\A_i(\X u)X_jX_luX_i\eta\, dx\\
& -(\beta+2)\int_\Omega \sum_{i,j=1}^{2n} \eta^{\beta+2} v^{\beta+1}
|\X u|^2 X_l uD_j\A_i(\X u)X_iX_luX_iv\, dx\\
& -\int_\Omega\sum_{i=1}^{2n} D_i\A_{n+l}(\X u) TuX_i\big(\eta^{\beta+2}
v^{\beta+2}|\X u|^2 X_lu\big)\, dx\\
& +\int_\Omega T\big(\A_{n+l}(\X u)\big)\eta^{\beta+2}v^{\beta+2}|\X u|^2 X_lu\, dx\\
= &\, I_1^l+I_2^l+I_3^l+I_4^l.
\end{aligned}
\end{equation}
Here we denote the integrals in the right hand side of \eqref{eq:Xu} by 
$I_1^l, I_2^l, I_3^l$ and $I_4^l$ in order respectively. Similarly 
for all $l\in\{n+1,n+2,...,2n\}$, from equation \eqref{eq:Xu other}, we have 
\begin{equation}\label{eq2}
\begin{aligned}
\int_\Omega &\sum_{i,j=1}^{2n} \eta^{\beta+2}v^{\beta+2}D_j\A_i(\X u)X_jX_iuX_i\big( |\X u|^2 X_lu\big)\, dx\\
= & -(\beta+2)\int_\Omega\sum_{i,j=1}^{2n} \eta^{\beta+1} v^{\beta+2}|\X u|^2 X_lu D_j\A_i(\X u)X_jX_luX_i\eta\, dx\\
& -(\beta+2)\int_\Omega \sum_{i,j=1}^{2n} \eta^{\beta+2} v^{\beta+1}
|\X u|^2 X_l uD_j\A_i(\X u)X_iX_luX_iv\, dx\\
& +\int_\Omega\sum_{i=1}^{2n} D_i\A_{l-n}(\X u) TuX_i\big(\eta^{\beta+2}
v^{\beta+2}|\X u|^2 X_lu\big)\, dx\\
& -\int_\Omega T\big(\A_{l-n}(\X u)\big)\eta^{\beta+2}v^{\beta+2}|\X u|^2 X_lu\, dx\\
= &\,  I_1^l+I_2^l+I_3^l+I_4^l.
\end{aligned}
\end{equation}
Again we denote the integrals in the right hand side of (\ref{eq2}) by 
$I_1^l, I_2^l, I_3^l$ and $I_4^l$ in order respectively. 
Summing up the above equation (\ref{eq1}) and (\ref{eq2}) for all $l$ from
$1$ to $2n$, we end up with
\begin{equation}\label{equ3}
\int_\Omega \sum_{i,j,l} \eta^{\beta+2}v^{\beta+2}D_j\A_i(\X u)X_jX_iuX_i\big( |\X u|^2 X_lu\big)\, dx=  \sum_l \sum_{m=1}^4 I_m^l, 
\end{equation}
where all sums for $i,j,l$ are from $1$ to $2n$. 

In the following, we estimate both sides of (\ref{equ3}). For the left hand of (\ref{equ3}), note that
\[ X_i\big(|\X u|^2 X_lu\big)=|\X u|^2 X_iX_l u+X_i(|\X u|^2)X_lu.\]
Then by the structure condition \eqref{eq:str cond diff}, we have that
\[ \sum_{i,j,l}D_j\A_i(\X u)X_jX_luX_i\big(|\X u|^2X_lu\big)\ge 
\weight |\X u|^2|\X\X u|^2,\]
which gives us the following estimate for the left hand side of \eqref{equ3}
\begin{equation}\label{est:left}
\text{left of } \eqref{equ3} \ge \int_\Omega \eta^{\beta+2}v^{\beta+2}
\weight |\X u|^2|\X\X u|^2\, dx.
\end{equation}
Now we estimate the right hand side of (\ref{equ3}). We will show that
$I_m^l$ satisfies the following estimate for each $l=1,2,...,2n$ and each $m=1,2,3,4$
\begin{equation}\label{est1}
\begin{aligned}
| I_m^l|\le & \, \frac{1}{36n}\int_\Omega \eta^{\beta+2}v^{\beta+2}\weight |\X u|^2|\X\X u|^2\,dx\\
 & + c (\beta+2)^2\int_\Omega \eta^\beta \big(|\X \eta|^2+\eta| T\eta|\big) v^{\beta+2}
\weight |\X u|^4 \, dx\\
& +  c(\beta+2)^2\int_\Omega \eta^{\beta+2} v^ \beta\weight |\X u|^4| \X v|^2\, dx\\
& +
c\int_\Omega\eta^{\beta+2} v^{\beta+2}\weight |\X u|^2| Tu|^2\, dx,
\end{aligned}
\end{equation}
where $c=c(n,g_0,L)>0$. Then the lemma follows from the above estimates 
(\ref{est:left}) and (\ref{est1})
for both sides of (\ref{equ3}). The proof of the lemma is finished, modulo
the proof of (\ref{est1}). In the rest, we prove (\ref{est1}) in the order of $m=1, 2,3,4$.

First, when $m=1$, we have for $I_1^l, l=1,2,...,2n$, by the structure condition \eqref{eq:str cond diff} that
\[
| I_1^l| \le c(\beta+2)\int_\Omega \eta^{\beta+1}| \X \eta| v^{\beta+2}\weight |\X u|^3|\X\X u|\, dx,
\]
from which it follows by Young's inequality that
\begin{equation}\label{est:I1}
\begin{aligned}
| I_1^l|\le &\, \frac{1}{36n}\int_\Omega \eta^{\beta+2}v^{\beta+2}
\weight |\X u|^2|\X\X u|^2\,dx\\
 & + c(\beta+2)^2\int_\Omega \eta^\beta |\X \eta|^2 v^{\beta+2}
\weight |\X u|^4\, dx.
\end{aligned}
\end{equation}
Thus (\ref{est1}) holds for $I_1^l, l=1,2,...,2n$. 

Second, when $m=2$,  we have for $I_1^l, l=1,2,...,2n$, by the structure condition \eqref{eq:str cond diff} that
\[ | I_2^l|\le c(\beta+2)\int_\Omega\eta^{\beta+2}v^{\beta+1}
\weight |\X u|^3|\X\X u\|\X v|\, dx,
\]
from which it follows by Young's inequality that
\begin{equation}\label{est:I2}
\begin{aligned}
| I_2^l|\le &\, \frac{1}{36n}\int_\Omega \eta^{\beta+2}v^{\beta+2}
\weight |\X u|^2|\X\X u|^2\,dx\\
 & + c(\beta+2)^2\int_\Omega \eta^{\beta+2} v^{\beta}
\weight |\X u|^4|\X v|^2\, dx.
\end{aligned}
\end{equation}
This proves (\ref{est1}) for $I_2^l, l=1,2,...,2n$.

Third, when $m=3$, we use 
\begin{equation*}
\begin{aligned}
\big| X_i & \big(  \eta^{\beta+2}v^{\beta+2}|\X u|^2 X_lu\big)\big|\le  3 \eta^{\beta+2} v^{\beta+2} |\X u|^2 |\X\X u|\\
& + (\beta+2)\eta^{\beta+1}v^{\beta+2}
|\X u|^3|\X \eta| +(\beta+2)\eta^{\beta+2}v^{\beta+1}
|\X u|^3| \X v|.
\end{aligned}
\end{equation*}
and the structure condition \eqref{eq:str cond diff}, to obtain
\begin{equation*}
\begin{aligned}
| I_3^l| \le &\,  c \int_\Omega \eta^{\beta+2}v^{\beta+2}\weight |\X u|^2|\X \X u| | Tu|\, dx\\
& + c(\beta+2)\int_\Omega\eta^{\beta+1}|\X \eta| v^{\beta+2}\weight |\X u|^3| Tu|\, dx\\
& + c(\beta+2)\int_\Omega \eta^{\beta+2} v^{\beta+1}\weight |\X u|^3|\X v\| Tu|\, dx,
\end{aligned}
\end{equation*}
from which it follows by Young's inequality that
\begin{equation}\label{est:I3}
\begin{aligned}
| I_3^l| \le &\, \frac{1}{36n}\int_\Omega \eta^{\beta+2}v^{\beta+2}
\weight |\X u|^2|\X\X u|^2\,dx\\
&+c \int_\Omega \eta^{\beta+2}v^{\beta+2}\weight |\X u|^2 | Tu|^2\, dx\\
& + c(\beta+2)^2 \int_\Omega\eta^{\beta}|\X \eta|^2 v^{\beta+2}\weight |\X u|^4\, dx\\
& + c(\beta+2)^2\int_\Omega \eta^{\beta+2} v^{\beta}\weight |\X u|^4|\X v|^2\, dx.
\end{aligned}
\end{equation}
This proves (\ref{est1}) for $I_3^l, l=1,2,...,2n$.

Finally, when $m=4$, we prove (\ref{est1}) for $I_4^l$. We consider
only the case $l=1,2,...,n$. The case $l=n+1,n+2,...,2n$ can be treated similarly. 
Let us denote 
\begin{equation}\label{def:w} 
w=\eta^{\beta+2}|\X u|^2 X_lu.
\end{equation}
so that we can write test-function $\varphi=\eta^{\beta+2}v^{\beta+2} |\X u|^2 X_lu$ as
$\varphi=v^{\beta+2}w$. Then, for $I_4^l$ in \eqref{eq1}, we rewrite  $T=X_1X_{n+1}-X_{n+1}X_1$ and use 
integration by parts to obtain
\begin{equation}\label{est:I41}
\begin{aligned}
I_4^l=\int_\Omega T\big(\A_{n+l}(\X u)\big)\varphi\, dx
=\int_\Omega X_1\big(\A_{n+l}(\X u)\big) X_{n+1}\varphi-X_{n+1}\big(\A_{n+l}(\X u)\big)X_1\varphi\, dx.
\end{aligned}
\end{equation}
Using  
$\X \varphi= (\beta+2)v^{\beta+1}w\X v+ v^{\beta+2}\X w$ 
in (\ref{est:I41}), we get 
\begin{equation}\label{est:I42}
\begin{aligned}
I_4^l=&\, (\beta+2)\int_\Omega v^{\beta+1} w\Big(X_1\big(\A_{n+l}(\X u)\big)X_{n+1} v-X_{n+1}\big( \A_{n+l}(\X u)\big)X_1 v\Big)\, dx\\
&+\int_\Omega v^{\beta+2}\Big( X_1\big(\A_{n+l}(\X u)\big)X_{n+1}w-X_{n+1}\big(\A_{n+l}(\X u)\big)X_1w\Big)\, dx\\
= &\, J^l+K^l.
\end{aligned}
\end{equation}
Here we denote the first and the second integral in the right hand side of 
(\ref{est:I41}) by $J^l$ and $K^l$, respectively. Now we estimate 
$J^l$ as follows. From structure condition \eqref{eq:str cond diff} and  (\ref{def:w}) 
\begin{equation*}
| J^l| \le c(\beta+2)\int_\Omega \eta^{\beta+2} v^{\beta+1}
\weight |\X u|^3| \X\X u\|\X v|\, dx,
\end{equation*}
from which it follows by Young's inequality, that
\begin{equation}\label{est:Jl}
\begin{aligned}
| J^l| \le &\, \frac{1}{72n}\int_\Omega \eta^{\beta+2}v^{\beta+2}\weight |\X u|^2|\X\X u|^2\,dx\\
&+c(\beta+2)^2\int_\Omega \eta^{\beta+2} v^{\beta}
\weight |\X u|^4|\X v|^2\, dx.
\end{aligned}
\end{equation}
The above inequality shows that $J^l$ satisfies similar estimate as (\ref{est1}) for all $l=1,2,...,n$.
Now we estimate $K^l$. Integration by parts again, yields 
\begin{equation}\label{est:Kl}
\begin{aligned}
K^l=&\, (\beta+2) \int_\Omega v^{\beta+1} \A_{n+l}(\X u)\Big( X_{n+1} vX_1 w-X_1vX_{n+1}w\Big)\, dx\\
&\quad-\int_\Omega v^{\beta+2}\A_{n+l}(\X u) Tw\, dx\\
= &\, K_1^l+K_2^l.
\end{aligned}
\end{equation}
For $K_1^l$, we have by the structure condition \eqref{eq:str cond diff} that
\begin{equation*}
\begin{aligned}
| K^l_1| \le\ &  c(\beta+2)\int_\Omega \eta^{\beta+2}v^{\beta+1}\weight |\X u|^3|\X\X u\|\X v|\, dx\\
& +c(\beta+2)^2\int_\Omega\eta^{\beta+1}v^{\beta+1}\weight |\X u|^4|\X v\|\X \eta|\, dx
\end{aligned}
\end{equation*}
from which it follows by Young's inequality that
\begin{equation}\label{est:Kl1}
\begin{aligned}
| K^l_1| \le &\, \frac{1}{144n}\int_\Omega \eta^{\beta+2}v^{\beta+2}\weight |\X u|^2|\X\X u|^2\,dx\\
&+c(\beta+2)^2\int_\Omega \eta^{\beta+2} v^{\beta}
\weight |\X u|^4|\X v|^2\, dx\\
& + c(\beta+2)^2\int_\Omega \eta^{\beta}|\X \eta|^2 v^{\beta+2}\weight |\X u|^4\, dx.
\end{aligned}
\end{equation}
The above inequality shows that $K_1^l$ also satisfies similar estimate as (\ref{est1}) for all $l=1,2,...,n$. We continue to estimate $K_2^l$ in (\ref{est:Kl}). Note that
\begin{equation*}
\begin{aligned}
Tw=\, (\beta+2)\eta^{\beta+1}|\X u|^2 X_luT\eta+\eta^{\beta+2}|\X u|^2 X_lTu + \sum_{i=1}^{2n}2\eta^{\beta+2}X_luX_iuX_iTu.
\end{aligned}
\end{equation*}
Therefore we write $K^l_2$ as
\begin{equation*}
\begin{aligned}
K_2^l = &\, -(\beta+2)\int_\Omega\eta^{\beta+1}v^{\beta+2} \A_{n+l}(\X u)|\X u|^2 X_luT\eta\, dx\\
&\, -\int_\Omega \eta^{\beta+2}v^{\beta+2} \A_{n+l}(\X u)|\X u|^2 X_lTu\, dx\\
&-2\sum_{i=1}^{2n}\int_\Omega\eta^{\beta+2}v^{\beta+2} \A_{n+l}(\X u)X_luX_iu X_iTu\, dx.
\end{aligned}
\end{equation*}
For the last two integrals in the above equality, we apply integration by parts 
to get 
\begin{equation*}
\begin{aligned}
K_2^l = &\, -(\beta+2)\int_\Omega\eta^{\beta+1}v^{\beta+2} \A_{n+l}(\X u)|\X u|^2 X_luT\eta\, dx\\
&\, +\int_\Omega X_l\Big(\eta^{\beta+2}v^{\beta+2} \A_{n+l}(\X u)
\vert\X u\vert^2\Big)Tu\, dx\\
&+2\sum_{i=1}^{2n}\int_\Omega X_i\Big(\eta^{\beta+2}v^{\beta+2} \A_{n+l}(\X u)X_luX_iu\Big)Tu\, dx.
\end{aligned}
\end{equation*}
Now we may estimate the integrals in the above equality by the structure condition \eqref{eq:str cond diff}, to obtain the following estimate for $K^l_2$.
\begin{equation*}
\begin{aligned}
| K_2^l|\le &\, c(\beta+2)\int_\Omega \eta^{\beta+1}v^{\beta+2}\weight |\X u|^4| T\eta|\, dx\\
&\, +c\int_\Omega \eta^{\beta+2}v^{\beta+2}\weight |\X u|^2|\X\X u\| Tu|\, dx\\
&\, +c(\beta+2)\int_\Omega \eta^{\beta+2} v^{\beta+1}\weight |\X u|^3|\X v\| Tu|\, dx\\
&\, +c(\beta+2)\int_\Omega \eta^{\beta+1}v^{\beta+2}\weight |\X u|^3|\X\eta\| Tu|\, dx.
\end{aligned}
\end{equation*}
By Young's inequality, we end up with the following estimate for $K_2^l$
\begin{equation}\label{est:K2l}
\begin{aligned}
| K_2^l|\le & \, \frac{1}{144n}\int_\Omega \eta^{\beta+2}v^{\beta+2}\weight |\X u|^2|\X\X u|^2\,dx\\
 & + c (\beta+2)^2\int_\Omega \eta^\beta \big(|\X \eta|^2+\eta| T\eta|\big)v^{\beta+2}
\weight |\X u|^4\, dx\\
& +  c(\beta+2)^2\int_\Omega \eta^{\beta+2} v^ \beta\weight |\X u|^4| \X v|^2\, dx\\
& +
c\int_\Omega\eta^{\beta+2} v^{\beta+2}\weight |\X u|^2| Tu|^2\, dx.
\end{aligned}
\end{equation}
This shows that $K_2^l$ also satisfies similar estimate as (\ref{est1}). Now we combine the estimates (\ref{est:Kl1}) for $K^l_1$ and (\ref{est:K2l}) for $K_2^l$.
Recall that $K^l=K_1^l+K_2^l$ as denoted in (\ref{est:Kl}). We obtain that the following estimate for $K^l$.
\begin{equation}\label{est:Kll}
\begin{aligned}
| K^l|\le & \, \frac{1}{72n}\int_\Omega \eta^{\beta+2}v^{\beta+2}\weight |\X u|^2|\X\X u|^2\,dx\\
 & + c (\beta+2)^2\int_\Omega \eta^\beta \big(|\X \eta|^2+\eta| T\eta|\big)v^{\beta+2}
\weight |\X u|^4\, dx\\
& +  c(\beta+2)^2\int_\Omega \eta^{\beta+2} v^ \beta\weight |\X u|^4| \X v|^2\, dx\\
& +
c\int_\Omega\eta^{\beta+2} v^{\beta+2}\weight |\X u|^2| Tu|^2\, dx.
\end{aligned}
\end{equation}
Recall that $I_4^l=J^l+K^l$. We combine the estimates (\ref{est:Jl}) for $J^l$ and (\ref{est:Kll}) for $K^l$, and we can see that the claimed estimate (\ref{est1}) holds for $I_4^l$ for all $l=1,2,...,n$. We can prove 
(\ref{est1}) similarly for $I_4^l$ for all $l=n+1,n+2,...,2n$. This finishes the proof of the claim (\ref{est1}) for $I_m^l$ for all $l=1,2,...,2n$ and all $m=1,2,3,4$, and hence also the proof of the lemma.
\end{proof}

\begin{proof}[Proof of Lemma \ref{lemma:cacci:k}]\noindent
\\
Recalling \eqref{comparable'}, notice that 
$\tau\mu(r)\leq |\X u|\leq (2n)^\frac{1}{2}\mu(r)$ in $A^+_{k,r}(X_lu)$. 
Then this combined with doubling condition of $g$, implies that  
\begin{equation}\label{comparable}
 \frac{\tau^{g_0}}{(2n)^{1/2}}\F(\mu(r))\le \F(|\Xu|)\le \frac{(2n)^{g_0/2}}{\tau}\F(\mu(r)) \quad \text{in }\, A_{k,r}^+(X_l u).
 \end{equation}
 In the proof, we only consider $l\in\{1,\ldots,n\}$; the proof is similar for $l\in\{n,\ldots,2n\}$. In addition, note that we can also assume $|k|\leq \mu(r_0)$ without loss of generality, to prove \eqref{HDG}. This proof is very similar to that of Lemma 4.3 in \cite{Zhong}. 
 
 Let $\eta\in C^\infty_0(B_{r'})$ is a standard cutoff function such that $\eta = 1$ in $B_{r''}$ and $|\X\eta|\leq 2/(r'-r'')$, we choose 
 $\varphi = \eta^2 (X_lu -k)^+$ as a test function in 
 equation \eqref{eq:Xu} to get 
\begin{equation*}
\begin{aligned}
\int_{B_r} \sum_{i,j}&\eta^2D_j\A_i(\Xu)X_jX_l uX_i((X_lu -k)^+)\, dx\\
=&\ -2\int_{B_r}\sum_{i,j}\eta (X_lu -k)^+ D_j\A_i(\Xu)X_jX_l uX_i \eta\, dx\\
&\ -\int_{B_r} \sum_i D_i\A_{n+l}(\Xu)TuX_i(\eta^2 (X_lu -k)^+)\, dx\\
&\ +\int_{B_r} \eta^2 (X_lu -k)^+ T(\A_{n+l}(\Xu)) \, dx
\end{aligned}
\end{equation*}
Using structure condition \eqref{eq:str cond diff} and Young's inequality, 
we obtain 
\begin{equation}\label{J}
  \begin{aligned}
   \int_{B_r}\eta^2\, \weight &| \X(X_lu-k)^{+}|^2\, dx \\ 
   &\le 
 c\int_{B_r}|
\X\eta|^2\weight |(X_lu-k)^{+}|^2\, dx\\
&\quad +\ c\int_{ A^+_{k,r}}\eta^2\, \weight| Tu|^2\, dx\\
&\quad +\ c\int_{B_r}\eta^2(X_lu-k)^{+}\weight|\X(Tu)| \, dx\\
 &=\, J_1+ J_2 +J_3.
  \end{aligned}
 \end{equation}
Notice that to show \eqref{HDG} from \eqref{J}, 
we need to estimate $J_2$ and $J_3$. First, we estimate $J_2$ 
using H\"older's inequality, \eqref{eq:Tint} and \eqref{comparable} 
as follows.  
\begin{equation}\label{J2}
 \begin{aligned}
  J_2 \,&\le \Big(\int_{B_{r_0/2}} \weight| Tu|^q\,
dx\Big)^{\frac 2 q}\Big(\int_{A_{k,r}^+}\weight\,
dx\Big)^{1-\frac{2}{q}}\\
&\leq c\,r_0^{-2}\mu(r_0)^2\F(\mu(r_0))
|B_{r_0}|^\frac{2}{q}| A^+_{k,r}(X_lu)|^{1-\frac{2} {q}}
 \end{aligned}
\end{equation}
for $c=c(n,g_0,L,q,\tau)>0$.

The estimate of $J_3$ is more involved. We wish to show the following, which 
combined with \eqref{J} and \eqref{J2}, completes the proof of this lemma.
\begin{equation}\label{claim J3}
 J_3 \leq \mathcal M/2 + c\,r_0^{-2}
\mu(r_0)^2\F(\mu(r_0))
|B_{r_0}|^\frac{2}{q}| A^+_{k,r}(X_lu)|^{1-\frac{2} {q}}
\end{equation}
for some $c = c(n,p,L,q,\tau)>0$, where 
\begin{equation}\label{M}
 \begin{aligned}
  \mathcal{M}\, := \int_{B_r}\eta^2 \weight | \X(X_lu-k)^{+}|^2\, dx
  + \int_{B_r}|\X\eta|^2\weight |(X_lu-k)^{+}|^2\, dx. 
 \end{aligned} 
\end{equation}
In order to prove 
the claim \eqref{claim J3}, 
we follow the iteration argument of Zhong \cite{Zhong}. 

For any $\kappa \geq 0$,  
we take $ \eta^2|(X_lu-k)^+|^2|Tu|^\kappa Tu $ as a test function in 
\eqref{eq:Tu} and use structure condition \eqref{eq:str cond diff}, 
to obtain 
\begin{equation*}
 \begin{aligned}
  (\kappa+1)\int_{B_r} \eta^2|(X_iu-& k)^+|^2 \weight|Tu|^\kappa|\X(Tu)|^2 dx \\
   \leq\ c \int_{B_r} &\eta |(X_iu-k)^+|^2\weight
  |Tu|^{\kappa+1}|\X(Tu)||\X\eta|\dx\\
 + c \int_{B_r} &\eta^2|(X_lu-k)^+|\weight
  |Tu|^{\kappa+1}|\X(Tu)||\X (X_lu-k)^+|\dx
 \end{aligned}
\end{equation*}
Using Cauchy-Schwartz inequality on the above, we obtain 
\begin{equation}\label{it step}
 \begin{aligned}
  \int_{B_r}\eta^2|(X_iu-& k)^+|^2 \weight|Tu|^\kappa|\X(Tu)|^2 dx\\ 
  \leq c\, \mathcal{M}^\frac{1}{2}&\Big(\int_{B_r}\eta^2
|(X_iu- k)^+|^2 \weight|Tu|^{2\kappa+2}|\X(Tu)|^2 dx\Big)^\frac{1}{2}
 \end{aligned}
\end{equation}
for $c = c(n,g_0,L)>0$ and $\mathcal{M}$ as defined in \eqref{M}. 
Now we iterate \eqref{it step}, choosing the sequence 
$ \kappa_m = 2^m-2$ for $m\in \N$. 
For any $m\geq 1$, we set 
$$ a_m  = \int_{B_r}\eta^2 \weight|(X_lu-k)^+|^2|Tu|^{\kappa_m}|\X(Tu)|^2 dx $$
and obtain 
$ a_1 \,\leq\, (c\,\mathcal{M})^\frac{1}{2} a_2^\frac{1}{2}\,\leq\, \ldots
\,\leq (c\,\mathcal{M})^{(1-\frac{1}{2^m})}\,a_{m+1}^\frac{1}{2^m}$, for every $m\in \N$. Now, for 
some large enough $m$ to be chosen later, we estimate $a_{m+1}$.
Recalling, 
$|k|\leq \mu(r_0)$ and using Corrolary \ref{cor:TX}, we obtain  
\begin{equation}\label{2 a}
\begin{aligned}
a_{m+1}  &\leq c\,\mu(r_0)^2\int_{B_{r_0/2}} 
\weight | Tu|^{\kappa_{m+1}}|\X(Tu)|^2\, dx\\
  &\le c\,r_0^{-(\kappa_{m+1}+4)}\int_{B_{r_0}}
  \weight |\X u|^{\kappa_{m+1}+2} \, dx
\end{aligned}
\end{equation}
for some $c=c(n,g_0,L, m)>0$. Hence, we get 
\begin{equation}\label{est a}
 a_{m+1}  \leq c\,r_0^{-(\kappa_{m+1}+4)}
 \F(\mu(r_0))
 \mu(r_0)^{\kappa_{m+1}+4}\, |B_{r_0}|.
\end{equation}
Now we go back to the estimate of $J_3$.
From 
H\"{o}lder's inequality and \eqref{comparable}, 
\begin{equation*}
 \begin{aligned}
  J_3  &\leq c \Big(\int_{B_r}\eta^2  \weight |(X_lu-k)^+|^2 |\X(Tu)|^2\dx\Big)^{\frac{1}{2}} 
  \Big(\int_{A_{k,r}^+} \weight dx\Big)^\frac{1}{2}\\
  &\leq c\,a_1^{1/2}\F(\mu(r_0))^{1/2}
  |A_{k,r}^+(X_lu)|^{1/2}.
 \end{aligned}
\end{equation*}
for $c=c(n,g_0,L,\tau)>0$. We continue further, 
using the iteration to estimate $a_1^{1/2}$ in terms of $a_{m+1}$ 
and $\mathcal{M}$. Then we use \eqref{est a} and obtain 
\begin{equation*}
  \begin{aligned}
   J_3 &\leq c\,\mathcal{M}^{\frac{1}{2}(1-\frac{1}{2^m})}
   \,a_{m+1}^\frac{1}{2^{m+1}}\F(\mu(r_0))^{1/2}
  |A_{k,r}^+(X_lu)|^\frac{1}{2}\\
  &\leq \frac{c}{r_0^{(1+\frac{1}{2^m})}}\mathcal{M}^{\frac{1}{2}(1-\frac{1}{2^m})}
  \F(\mu(r_0))^{\frac{1}{2}(1+\frac{1}{2^m})}
  \mu(r_0)^{(1+\frac{1}{2^m})}|B_{r_0}|^\frac{1}{2^{m+1}}|A_{k,r}^+(X_lu)|^\frac{1}{2}
  \end{aligned}
 \end{equation*}
Using Young's inequality on the above, we finally obtain
\begin{equation}\label{J3}
 J_3 \leq \mathcal M/2 + c\,r_0^{-2}
\F(\mu(r_0))\mu(r_0)^2
|B_{r_0}|^\frac{1}{2^m+1}| A^+_{k,r}(X_lu)|^\frac{2^m}{2^m+1}
\end{equation}
for some $c = c(n,g_0,L,\tau,m)>0$. The claim \eqref{claim J3} follows 
immediately from \eqref{J3}, with the choice of 
$m = m(q)\in\N$ such that $2^m/(2^m+1)\,\geq\, 1-2/q$. 
This completes the proof.
\end{proof}

\section*{Appendix II}
Here we provide an outline of the proof of Lemma \ref{cor:Tu:high} for the reader's convenience. It requires 
some Caccioppoli type estimates of horizontal and vertical derivatives, similar to those in \cite{Zhong}. The proof of Lemma \ref{cor:Tu:high} shall follow in the end. 

The following Lemma is similar to Lemma 3.4 in \cite{Zhong} and Lemma 2.6 in \cite{Muk-Zhong}. The proof is similar 
and easier than the proof of Lemma \ref{lem:start} in Appendix I, so we omit it. 
We refer the reader to \cite{Muk-Zhong} for some remarks on the proof of 
Lemma 2.6 in it. 
\begin{Lem}\label{caccioppoli:horizontal:sigma}
For any $\beta\ge 0$ and $\eta\in C^\infty_0(\Omega)$, there exists 
$c= c(n,g_0,L)>0$ such that 
\begin{equation*}
\begin{aligned}
\int_{\Omega}\eta^2\, \weight |\X u|^\beta|\X\X u|^2\, dx\le&
\ c\int_\Omega (| \X \eta|^2+\eta| T\eta|)\weight |\X u|^{\beta+2}\, dx\\
&+c(\beta+1)^4\int_\Omega\eta^2\, \weight |\X u|^\beta| Tu|^2\, dx.
\end{aligned}
\end{equation*}
\end{Lem}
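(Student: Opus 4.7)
The approach is the classical differentiated-equation technique. I would differentiate $\dvh\A(\X u)=0$ horizontally to obtain equations \eqref{eq:Xu} and \eqref{eq:Xu other} for $X_lu$, and test each with $\varphi_l = \eta^2 |\X u|^\beta X_lu$, summing the resulting identities over $l \in \{1,\ldots,2n\}$. The regularity \eqref{eq:Cap reg} makes $\varphi_l$ admissible, after the standard approximation regularizing $|\X u|^\beta$ near zero when $\beta<2$ (which is harmless since all constants produced are independent of the regularization parameter).

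Computing $X_i\varphi_l = 2\eta\,X_i\eta\,|\X u|^\beta X_lu + \beta\,\eta^2|\X u|^{\beta-2}(\X u\cdot X_i\X u)X_lu + \eta^2|\X u|^\beta X_iX_lu$ and summing over $l$, the ellipticity in \eqref{eq:str cond diff} applied to the principal term bounds the left-hand side below by $\int\eta^2|\X u|^\beta\weight|\X\X u|^2\,dx$, which is the integral we wish to estimate. The remaining terms fall into three categories: (a) cutoff terms involving $|\X\eta|$ that, after Cauchy--Schwarz and Young's inequality, yield contributions of size $\int|\X\eta|^2\weight|\X u|^{\beta+2}\,dx$ together with a small absorbable multiple of the left-hand side; (b) cross terms arising from $D_i\A_{n+l}(\X u)Tu$, which by $|D\A|\le L\weight$ and Young's inequality produce contributions of the form $(\beta+1)^2\int\eta^2\weight|\X u|^\beta|Tu|^2\,dx$ plus absorbables; and (c) the delicate vertical terms $\mp\int\eta^2|\X u|^\beta X_lu\,T(\A_{n\pm l}(\X u))\,dx$.

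The main obstacle is category (c), since $T(\A_{n\pm l}(\X u))$ encodes the vertical derivative of the horizontal gradient. Following the commutator trick of \cite{Zhong} (exactly as in the proof of Lemma \ref{lem:start} in Appendix I), I would rewrite $T = X_1X_{n+1}-X_{n+1}X_1$ and integrate by parts twice: first shifting one horizontal derivative from $\A_{n\pm l}(\X u)$ onto $\varphi_l$, and then shifting the remaining horizontal derivative back onto $\A_{n\pm l}(\X u)$. This converts the vertical derivative into second horizontal derivatives of $u$ plus vertical derivatives of $\eta$, producing terms estimable by $\int(|\X\eta|^2+\eta|T\eta|)\weight|\X u|^{\beta+2}\,dx$, by $\int\eta^2\weight|\X u|^\beta|Tu|^2\,dx$, and by small absorbable multiples of $\int\eta^2|\X u|^\beta\weight|\X\X u|^2\,dx$. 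Each integration by parts that falls on $|\X u|^\beta$ produces a $(\beta+1)$-factor, and Young's inequality then spreads these into $(\beta+1)^4$ on the $|Tu|^2$-term (while leaving the cutoff term $\beta$-independent), matching the claimed coefficients. Collecting all contributions and absorbing the small multiples of $\int\eta^2|\X u|^\beta\weight|\X\X u|^2\,dx$ on the left yields exactly the stated inequality.
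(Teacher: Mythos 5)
Your proposal is correct and follows the same route the paper indicates: testing the differentiated equations \eqref{eq:Xu} and \eqref{eq:Xu other} with $\varphi_l = \eta^2|\X u|^\beta X_lu$, summing over $l$, using ellipticity of $D\A$ for the principal term, and handling the $T(\A_{n\pm l}(\X u))$ term via the commutator identity $T = X_1X_{n+1}-X_{n+1}X_1$ with a double integration by parts to convert $\X Tu$ into $Tu$. The paper omits the proof but refers to the Appendix I argument for Lemma \ref{lem:start}, of which this is the simpler prototype (without the truncation $v$); your accounting of the $(\beta+1)^4$ coefficient---two integrations by parts each landing on $|\X u|^\beta$, then squared by Young's inequality---is exactly how the power arises.
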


The following lemma is similar to Lemma 3.5 of \cite{Zhong}.

\begin{Lem}\label{caccioppoli:horizontal:T}
For any $\beta\ge 2$ and all non-negative $\eta\in C^\infty_0(\Omega)$, we have
\begin{equation*}
\begin{aligned}
\int_\Omega \eta^{\beta+2}&\weight| Tu|^{\beta}|\X\X u|^2\, dx 
\le
c(\beta+1)^2\|  \X\eta\| _{L^\infty}^2\int_\Omega\eta^\beta
\weight |\X u|^2  | Tu|^{\beta-2}| \X\Xu|^2\, dx,
\end{aligned}
\end{equation*}
for some constant $c=c(n,g_0,L)>0$.
\end{Lem}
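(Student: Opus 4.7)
The approach follows the strategy of \cite[Lemma 3.5]{Zhong} and of Lemma \ref{lem:start} above, adapted to the generalised weight $\weight$ via the pointwise ellipticity in \eqref{eq:str cond diff}. The plan is to test the differentiated equation \eqref{eq:Xu} (and analogously \eqref{eq:Xu other} for $l \in \{n+1,\ldots,2n\}$) against
\[\varphi_l = \eta^{\beta+2}\, X_l u\, |Tu|^\beta,\]
and to sum the resulting identity over $l = 1,\ldots,2n$. The symmetry of $D\A$ together with the lower bound in \eqref{eq:str cond diff} produces on the left-hand side the principal quantity $\int_\Omega \eta^{\beta+2}\,\weight\,|\X\X u|^2\,|Tu|^\beta\,dx$, which is precisely the term we want to control. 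The remaining contributions split into four groups: (a) boundary-type terms carrying a factor $|\X\eta|$ that arise from $X_i\eta^{\beta+2}$; (b) terms of the form $\eta^{\beta+2}\weight|\X u||\X\X u||Tu|^{\beta-1}|\X(Tu)|$ coming from $X_i|Tu|^\beta$; (c) coupling terms $\int D_i\A_{n+l}(\X u)\,Tu\,X_i\varphi_l\,dx$ arising from the second sum in \eqref{eq:Xu}; and (d) the vertical commutator term $\int T(\A_{n+l}(\X u))\,\varphi_l\,dx$.

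For group (a), Young's inequality reduces the contribution to a multiple of $(\beta+1)^2\,\|\X\eta\|_{L^\infty}^2\int_\Omega \eta^\beta\weight|\X u|^2|Tu|^\beta\,dx$, and the elementary pointwise bound $|Tu|^2\le 4|\X\X u|^2$---a consequence of the commutation relation $[X_l,X_{n+l}]=T$ noted in Section~\ref{sec:Preliminaries}---recasts this exactly in the form claimed on the right-hand side of the lemma. Group (b) is treated by Young's inequality with a small parameter: part of the integral is absorbed on the left, while the residual $|\X(Tu)|^2$-weighted term is controlled by the horizontal Caccioppoli estimate for $Tu$ in Lemma \ref{caccioppoli:T}, followed by another application of $|Tu|^2\le 4|\X\X u|^2$ to produce the desired factor $|\X u|^2|Tu|^{\beta-2}|\X\X u|^2$. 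Group (c) is analogous since $|D_i\A_{n+l}(\X u)| \le L\,\weight$ by \eqref{eq:str cond diff}.

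The main obstacle is group (d). Here I will integrate by parts in the vertical direction using the commutator identity $T = [X_1, X_{n+1}]$, giving
\[\int T(\A_{n+l}(\X u))\varphi_l\,dx = -\int X_{n+1}\bigl(\A_{n+l}(\X u)\bigr)\, X_1\varphi_l\,dx + \int X_1\bigl(\A_{n+l}(\X u)\bigr)\, X_{n+1}\varphi_l\,dx.\]
Each factor $X_k\A_{n+l}(\X u) = \sum_m D_m\A_{n+l}(\X u)\,X_kX_m u$ is bounded by $L\weight|\X\X u|$ via \eqref{eq:str cond diff}, and expanding $X_k\varphi_l$ reproduces terms of groups (a)--(c), plus a self-referential piece of the form $\int\eta^{\beta+2}\weight|\X\X u|^2|Tu|^\beta\,dx$ that matches the LHS. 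The hard step will be to absorb this self-referential contribution back into the left-hand side: this requires choosing the small parameters in the Young's inequalities for (b) and (c) sufficiently small and tracking the constants carefully so that the net coefficient on the LHS remains strictly positive (the upper bound $L$ in \eqref{eq:str cond diff} enters here only through multiplicative factors). Collecting all estimates and rearranging then yields the claimed inequality with constant $c = c(n,g_0,L) > 0$.
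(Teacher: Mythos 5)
Your overall strategy (test the differentiated equation \eqref{eq:Xu} against $\varphi_l = \eta^{\beta+2}X_lu|Tu|^\beta$, sum over $l$, split the error into groups controlled via Lemma \ref{caccioppoli:T} and $|Tu|\le 2|\XX u|$) is the right one, but your handling of group (d) -- and, less visibly, of group (c) -- has a genuine gap. Integrating $\int T(\A_{n+l}(\Xu))\varphi_l\,dx$ by parts via $T=[X_1,X_{n+1}]$ and then bounding $|X_k\A_{n+l}(\Xu)|$ by $L\weight|\XX u|$ produces, from the $X_kX_lu$ piece of $X_k\varphi_l$, a term $\sim L\int\eta^{\beta+2}\weight|\XX u|^2|Tu|^\beta\,dx$. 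This coefficient is of order $L$ and carries \emph{no} Young's-inequality parameter to tune; since the lower ellipticity bound only furnishes coefficient $1$ on the left-hand side, the self-referential piece cannot be absorbed once $L>1$. The same problem lurks in group (c): after expanding $X_i\varphi_l$, the term $\int D_i\A_{n+l}(\Xu)\,Tu\cdot\eta^{\beta+2}|Tu|^\beta X_iX_lu\,dx$ also produces an unabsorbable constant-$L$ copy of the left-hand side upon using $|Tu|\le 2|\XX u|$.

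The paper's proof (Appendix~II) avoids both issues by routing everything through $\X(Tu)$ rather than $\XX u$. Starting from identity \eqref{weak1}, the groups-(c) content is packaged into $I_1 = \int\eta^{\beta+2}X_l(\A_{n+l}(\Xu))|Tu|^\beta Tu\,dx$, which is integrated by parts so that the horizontal derivative lands on the weight $|Tu|^\beta Tu$ (giving $\X(Tu)$, hence a group-(b)-type term) and \emph{not} on $\A_{n+l}$ (which would give $\XX u$ and the problematic coefficient). For the vertical term $I_3 = \int\eta^{\beta+2}T(\A_{n+l}(\Xu))|Tu|^\beta X_lu\,dx$, the paper does no integration by parts at all: it invokes the apriori regularity $Tu\in HW^{1,2}_\loc$ from \eqref{eq:Cap reg} and the commutation $TX_mu = X_m(Tu)$ to write $T(\A_{n+l}(\Xu)) = \sum_m D_m\A_{n+l}(\Xu)X_m(Tu)$, bounded pointwise by $L\weight|\X(Tu)|$, which is again a group-(b) term handled by Young's inequality (with the small parameter placed on the $\eta^{\beta+4}$-weighted $|\X(Tu)|^2$ integral) followed by Lemma \ref{caccioppoli:T}. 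You should revise group (d) to use this direct pointwise estimate instead of the $[X_1,X_{n+1}]$-decomposition, and revise group (c) so that the derivative never falls on $\A$ without a compensating factor.
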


\begin{proof}
Note that have the following identity for any $\varphi\in C^\infty_0(\Omega)$, which can be easily obtained using 
$X_l\varphi$ as a test function in equation \eqref{eq:maineq} (see the proof of Lemma 3.5 in \cite{Muk0}).
\begin{equation}\label{weak1}
\int_\Omega\sum_{i=1}^{2n} X_l(\A_i(\Xu)X_i\varphi)\, dx =\int_\Omega T(\A_{n+l}(\Xu))\varphi\, dx
\end{equation}
Let $\eta\in C^\infty_0(\Omega)$ be a non-negative cut-off function. Fix any 
$l\in \{ 1,2,\ldots,n\}$ and $\beta\ge 2$, let $\varphi=\eta^{\beta+2}| Tu|^\beta X_lu$.
We use $\varphi$ as a test function in (\ref{weak1}).
Note that 
\[
X_i\varphi=\eta^{\beta+2} | Tu|^\beta X_iX_l u+\beta \eta^{\beta+2}| Tu|^{\beta-2}
TuX_l uX_i(Tu)+(\beta+2)\eta^{\beta+1} X_i\eta| Tu|^\beta X_l u
\]
and that $X_{n+l} X_l=X_lX_{n+l}-T$. Using these, we
obtain 
\begin{equation}\label{weak2}
\begin{aligned}
\int_{\Omega}\sum_i\eta^{\beta+2}| Tu|^\beta &X_l(\A_i(\Xu))X_lX_iu\,
dx=\int_\Omega \eta^{\beta+2} X_l(\A_{n+l}(\X u)) | Tu|^\beta Tu\, dx\\
&-(\beta+2)\int_{\Omega}\sum_i\eta^{\beta+1} | Tu|^\beta X_l (\A_i(\Xu))X_luX_i\eta\, dx\\
&+\int_{\Omega} \eta^{\beta+2}T\left(\A_{n+l}(\Xu)\right)| Tu|^\beta X_lu\, dx.\\
&-\beta\int_\Omega \sum_i \eta^{\beta +2} | Tu|^{\beta-2} Tu X_lu X_l(\A_i(\X u))X_i Tu\, dx\\
=&\ I_1+I_2+I_3+I_4.
\end{aligned}
\end{equation}
We will estimate both sides of (\ref{weak2}) as follows. For the
left hand side, the structure condition \eqref{eq:str cond diff} implies that
\[
\int_{\Omega}\sum_i\eta^{\beta+2}| Tu|^\beta X_l(\A_i(\Xu))X_lX_iu\,
dx\ge \int_{\Omega} \eta^{\beta+2}\weight| Tu|^\beta | X_l\Xu|^2\, dx.
\]

For the right hand side, we will show that for each item,
the following estimate is true.
\begin{equation}\label{claim1}
\begin{aligned}
| I_k|\ \le &\ c\tau\int_{\Omega}\eta^{\beta+2}\weight| Tu|^{\beta}|\X\X u|^2\, dx\\
&+\ \frac{c(\beta+1)^2\|  \X\eta\| _{L^\infty}^2}{\tau}
\int_{\Omega}\eta^{\beta}\weight |\X u|^2 | Tu|^{\beta-2}| \X\X u|^2\, dx,
\end{aligned}
\end{equation}
for $k=1,2,3, 4$, where $c= c(n,p,L)>0$ and $\tau>0$ is a constant. 
By the above estimates for both sides of (\ref{weak2}), we end up with
\begin{equation*}
\begin{aligned}
\int_{\Omega} \eta^{\beta+2}\weight &| Tu|^\beta | X_l\Xu|^2\, dx\ \le\  c\tau\int_{\Omega}\eta^{\beta+2}\weight| Tu|^{\beta}|\X\X u|^2\, dx\\
&+\ \frac{c(\beta+1)^2\|  \X\eta\| _{L^\infty}^2}{\tau}
\int_{\Omega}\eta^{\beta}\weight |\X u|^2 | Tu|^{\beta-2}| \X \X u|^2\, dx.
\end{aligned}
\end{equation*}
The above inequality is true for all $l=1,2,\ldots, n$. Similarly,
we can prove that it is true also for all $l=n+1,\ldots, 2n$. 
Now, by choosing $\tau>0$ small enough,
we complete the proof of the lemma, assuming the proof of (\ref{claim1}).

To prove (\ref{claim1}), we start with $I_4$. By structure condition \eqref{eq:str cond diff} and Young's inequality
\begin{equation*}
\begin{aligned}
| I_4|\le&\, c\beta \int_\Omega
\eta^{\beta+2}\weight |\X u|| Tu|^{\beta-1}| X_l \X u\| \X (Tu)|\, dx\\
\le &\ \frac{\tau}{\|  \X\eta\| _{L^\infty}^2}\int_\Omega \eta^{\beta+4}\weight
| Tu|^\beta|\X (Tu)|^2\,
dx\\
&+\ \frac{c\beta^2\|  \X\eta\| _{L^\infty}^2}{\tau}
\int_{\Omega}\eta^{\beta}\weight |\X u|^2 | Tu|^{\beta-2}| X_l \X u|^2\, dx.
\end{aligned}
\end{equation*}
We then apply  Lemma \ref{caccioppoli:T} to estimate the first integral in the right hand side.
\begin{equation}\label{weak3}
\begin{aligned}
\int_{\Omega}\eta^{\beta+4}\weight 
| Tu|^\beta|\X (Tu)|^2\,
dx\le\ c\int_{\Omega} \eta^{\beta+2}|\X \eta|^2\weight|
Tu|^{\beta+2}\, dx.
\end{aligned}
\end{equation}
Using this, we obtain
\begin{equation}\label{weak3prime}
\begin{aligned}
| I_4|
 &\le  c\tau\int_\Omega \eta^{\beta+2}\weight
| Tu|^{\beta+2}\,
dx\\
&+\frac{c\beta^2\|  \X\eta\| _{L^\infty}^2}{\tau}
\int_{\Omega}\eta^{\beta}\weight |\X u|^2 | Tu|^{\beta-2}| X_l \X u|^2\, dx.
\end{aligned}
\end{equation}
Since $| Tu|\le 2| \X\X u|$, (\ref{weak3prime}) implies that $I_4$ satisfies (\ref{claim1}).

To prove that (\ref{claim1}) holds for $I_1$,
integration by parts yields
\begin{equation*}
\begin{aligned}
I_1=&-\int_{\Omega} \A_{n+l}(\Xu)X_l(\eta^{\beta+2}| Tu|^\beta
Tu)\, dx\\
=&-(\beta+1)\int_{\Omega} \eta^{\beta+2} | Tu|^\beta \A_{n+l}(\Xu)X_l(Tu)\, dx\\
&-(\beta+2)\int_{\Omega}\eta^{\beta+1}
\A_{n+l}(\Xu) X_l\eta | Tu|^\beta Tu\, dx
=I_{11}+I_{12}.
\end{aligned}
\end{equation*}
We will show that (\ref{claim1}) holds for both $I_{11}$ and $I_{12}$.
For $I_{11}$, by structure condition \eqref{eq:str cond diff} and Young's inequality,
\begin{equation*}
\begin{aligned}
| I_{11}|\ \le &\ c(\beta+1)
\int_{\Omega}\eta^{\beta+2}\weight |\X u| | Tu|^\beta| \X (Tu)|\,
dx\\
\le &\ \frac{\tau}{\|  \X\eta\| _{L^\infty}^2}\int_{\Omega}\eta^{\beta+4}\weight
| Tu|^\beta|\X (Tu)|^2\,
dx\\
&+\ \frac{c(\beta+1)^2 \|  \X\eta\| _{L^\infty}^2}{\tau}
\int_{\Omega}\eta^{\beta}\weight |\X u|^2 | Tu|^\beta\, dx,
\end{aligned}
\end{equation*}
which, together with (\ref{weak3}) and the fact  $| Tu|\le 2| \X\X u|$,  implies that (\ref{claim1}) holds for $I_{11}$.
For $I_{12}$, (\ref{claim1}) follows from
\[
| I_{12}| \le  c(\beta+2)\int_{\Omega}\eta^{\beta+1}|\X\eta|\weight |\X u|| Tu|^{\beta+1}\, dx,
\]
and Young's inequality. This proves that $I_1$ satisfies (\ref{claim1}). 

For $I_2$, we have by structure condition \eqref{eq:str cond diff}, that 
\[
|I_2 |\le c(\beta+2)\int_{\Omega}\eta^{\beta+1}|\X\eta|\weight |\X u|| Tu|^\beta| X_l\X u|\, dx,
\]
from which, together with Young's inequality and $| Tu|\le 2| \X\X u|$, 
(\ref{claim1}) for $I_2$ follows. 

Finally,  $I_3$ has the same bound as that of $I_{11}$. We have 
\[
| I_3|\le c \int_\Omega \eta^{\beta+2}\weight |\X u|| Tu  |^\beta| \X (Tu)|\, dx, 
\]
thus $I_3$ satisfies (\ref{claim1}), too. This completes the proof of (\ref{claim1}), and hence that of the lemma.
\end{proof}

The following corollary is easy to prove, by using H\"older's inequality on Lemma \ref{caccioppoli:horizontal:T}.
\begin{Cor}\label{cor1}
For any $\beta\ge 2$ and all non-negative $\eta\in C^\infty_0(\Omega)$, we have
\begin{equation*}
\begin{aligned}
\int_\Omega \eta^{\beta+2}\weight| Tu|^\beta| \X\X u|^2\, dx\le c^{\frac{\beta}{2}}(\beta+1)^{\beta}\| \X\eta\| _{L^\infty}^\beta
\int_\Omega \eta^2\, \weight |\X u|^{\beta}  |\X\X u|^2\, dx,
\end{aligned}
\end{equation*}
where $c=c(n,g_0,L)>0$.
\end{Cor}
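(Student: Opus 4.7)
The plan is to obtain Corollary \ref{cor1} from Lemma \ref{caccioppoli:horizontal:T} by a single application of H\"older's inequality followed by absorption. Lemma \ref{caccioppoli:horizontal:T} already produces, for $\beta \ge 2$, the inequality
$$ I_1 \,:=\, \int_\Omega \eta^{\beta+2}\weight |Tu|^\beta |\X\X u|^2\, dx \,\le\, c(\beta+1)^2 \|\X\eta\|_{L^\infty}^2 \int_\Omega \eta^\beta \weight |\X u|^2 |Tu|^{\beta-2} |\X\X u|^2\, dx, $$
so the task is only to bound the right-hand integral by the target quantity $I_2 := \int_\Omega \eta^2 \weight |\X u|^\beta |\X\X u|^2\, dx$.

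The key observation is that the integrand on the right factorises precisely as
$$ \eta^\beta \weight |\X u|^2 |Tu|^{\beta-2}|\X\X u|^2 \,=\, \Big(\eta^{\beta+2}\weight |Tu|^\beta |\X\X u|^2\Big)^{\!(\beta-2)/\beta}\Big(\eta^2\weight |\X u|^\beta |\X\X u|^2\Big)^{\!2/\beta}, $$
as one checks by balancing exponents individually: for $\eta$ one has $(\beta+2)(\beta-2)/\beta + 4/\beta = \beta$, for $\weight$ one has $(\beta-2)/\beta + 2/\beta = 1$, for $|\X\X u|$ one has $2(\beta-2)/\beta + 4/\beta = 2$, and the factors of $|Tu|$ and $|\X u|$ balance trivially. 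Applying H\"older's inequality with conjugate exponents $\beta/(\beta-2)$ and $\beta/2$ gives
$$ I_1 \,\le\, c(\beta+1)^2 \|\X\eta\|_{L^\infty}^2\, I_1^{(\beta-2)/\beta}\, I_2^{\,2/\beta}. $$

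Using the a priori finiteness of $I_1$ (guaranteed by the regularisation \eqref{eq:ass} together with the regularity \eqref{eq:Cap reg}), we may divide by $I_1^{(\beta-2)/\beta}$ to obtain $I_1^{2/\beta} \le c(\beta+1)^2 \|\X\eta\|_{L^\infty}^2 I_2^{2/\beta}$, and raising to the power $\beta/2$ yields the stated estimate with constant $c^{\beta/2}(\beta+1)^\beta \|\X\eta\|_{L^\infty}^\beta$. The endpoint case $\beta=2$ needs no H\"older at all, since then $|Tu|^{\beta-2}\equiv 1$ and Lemma \ref{caccioppoli:horizontal:T} already coincides with the corollary. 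The only mildly delicate point in the argument is ensuring $I_1<\infty$ before absorbing, but this is not a genuine obstacle in our setting since every estimate is performed under the standing regularisation and then passed to the limit at the end.
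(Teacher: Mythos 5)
Your argument is correct and is exactly the route the paper has in mind: apply H\"older's inequality with exponents $\beta/(\beta-2)$ and $\beta/2$ to the right-hand side of Lemma~\ref{caccioppoli:horizontal:T}, absorb the resulting factor of $I_1^{(\beta-2)/\beta}$ (finite under the standing regularisation \eqref{eq:ass}, which makes $\weight$ bounded and gives $\X\X u\in L^2_{\loc}$ via \eqref{eq:Cap reg}), and raise to the power $\beta/2$. The exponent bookkeeping you verify is right, and the constant $c^{\beta/2}(\beta+1)^\beta\|\X\eta\|_{L^\infty}^\beta$ comes out as stated.
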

Now, we resate Lemma \ref{cor:Tu:high} as follows. 
\begin{Lem}
For any $\beta\geq 2$ and all non-negative $\eta\in C^\infty_0(\Omega)$, we have that
\begin{equation}\label{eq:Tint diff}
\int_\Omega\eta^{\beta+2}\,\weight| Tu|^{\beta+2}\, dx \le c(\beta)K^{\frac{\beta+2}{2}}
\int_{\supp(\eta)}\weight |\X u|^{\beta+2}\, dx,
\end{equation}
where $K=\| \X\eta\|_{L^\infty}^2+\|\eta
T\eta\|_{L^\infty}$ and  $c(\beta)=c(n,g_0,L,\beta)>0$. 
\end{Lem}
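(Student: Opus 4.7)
\medskip

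\noindent\textbf{Proof proposal.} The plan is to combine the three Caccioppoli-type estimates already established (Lemma~\ref{caccioppoli:T}, Lemma~\ref{caccioppoli:horizontal:sigma}, Corollary~\ref{cor1}) with the pointwise commutator bound $|Tu|\le 2|\XX u|$ that follows from \eqref{eq:comm}, and then to close the argument with a Young-type absorption tuned in the parameter $\beta$.

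\medskip

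\noindent First, using $|Tu|^2\le 4|\XX u|^2$ pointwise, one has
\[
I\ :=\ \int_\Omega \eta^{\beta+2}\,\weight\,|Tu|^{\beta+2}\,dx
\ \le\ 4\int_\Omega \eta^{\beta+2}\,\weight\,|Tu|^{\beta}|\XX u|^2\,dx.
\]
Applying Corollary~\ref{cor1} to the right-hand side yields
\[
I\ \le\ c(\beta)\,\|\X\eta\|_{L^\infty}^{\beta}\int_\Omega \eta^{2}\,\weight\,|\X u|^{\beta}|\XX u|^2\,dx,
\]
and then Lemma~\ref{caccioppoli:horizontal:sigma} (applied with exponent $\beta$) gives
\[
I\ \le\ c(\beta)\,\|\X\eta\|_{L^\infty}^{\beta}\bigg[\int_\Omega\!\big(|\X\eta|^2+\eta|T\eta|\big)\weight\,|\X u|^{\beta+2}\,dx\ +\ (\beta+1)^4\!\int_\Omega\eta^2\weight\,|\X u|^{\beta}|Tu|^2\,dx\bigg].
\]
The first bracketed integral is already controlled by $K\int_{\supp\eta}\weight|\X u|^{\beta+2}\,dx$, and since $\|\X\eta\|_{L^\infty}^{\beta}\le K^{\beta/2}$, it contributes the desired $c(\beta)K^{(\beta+2)/2}\int_{\supp\eta}\weight|\X u|^{\beta+2}\,dx$.

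\medskip

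\noindent The remaining task -- and the only mildly delicate point -- is to control the cross term $\int\eta^2\weight|\X u|^\beta|Tu|^2\,dx$. For this I would apply Young's inequality to the factorization
\[
\eta^2\weight|\X u|^\beta|Tu|^2
\ =\ \bigl(\eta^2\weight^{2/(\beta+2)}|Tu|^2\bigr)\cdot\bigl(\weight^{\beta/(\beta+2)}|\X u|^\beta\bigr),
\]
with conjugate exponents $p=(\beta+2)/2$ and $q=(\beta+2)/\beta$. The choice of powers is made precisely so that the $p$-th power of the first factor equals $\eta^{\beta+2}\weight|Tu|^{\beta+2}$ (reproducing $I$) and the $q$-th power of the second factor equals $\weight|\X u|^{\beta+2}$. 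Introducing a small parameter $\epsilon>0$ gives
\[
\int_\Omega\eta^2\weight|\X u|^\beta|Tu|^2\,dx\ \le\ \epsilon\, I\ +\ C(\epsilon,\beta)\!\int_{\supp\eta}\weight|\X u|^{\beta+2}\,dx,
\]
with $C(\epsilon,\beta)\sim\epsilon^{-q/p}=\epsilon^{-2/\beta}$. Substituting back, the coefficient of $I$ on the right is $c(\beta)\,\|\X\eta\|_{L^\infty}^{\beta}(\beta+1)^4\epsilon$, so choosing $\epsilon=[2c(\beta)(\beta+1)^4\|\X\eta\|_{L^\infty}^{\beta}]^{-1}$ allows absorption of $\tfrac12 I$ into the left-hand side.

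\medskip

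\noindent The main obstacle is tracking the $K$-dependence through the absorption, which is the reason the exponent $(\beta+2)/2$ appears. With the choice of $\epsilon$ above, $C(\epsilon,\beta)=c(\beta)\|\X\eta\|_{L^\infty}^{2}$, so the coefficient multiplying the absorbed term becomes
\[
c(\beta)\,\|\X\eta\|_{L^\infty}^{\beta}\cdot(\beta+1)^4\cdot C(\epsilon,\beta)\ \le\ c(\beta)\|\X\eta\|_{L^\infty}^{\beta+2}\ \le\ c(\beta)K^{(\beta+2)/2},
\]
matching exactly the $K$-power needed. Combining both contributions and dividing by $1/2$ produces
\[
I\ \le\ c(\beta)\,K^{(\beta+2)/2}\!\int_{\supp\eta}\weight\,|\X u|^{\beta+2}\,dx,
\]
as claimed. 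The whole argument is thus purely algebraic given Lemmas \ref{caccioppoli:T}, \ref{caccioppoli:horizontal:sigma} and Corollary~\ref{cor1}; no new Caccioppoli inequality has to be proved.
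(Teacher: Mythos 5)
Your proposal is correct and follows essentially the same route as the paper: it combines Lemma~\ref{caccioppoli:horizontal:sigma}, Corollary~\ref{cor1}, and the pointwise bound $|Tu|\le 2|\XX u|$, then closes via a Young-type interpolation with exponents $(\beta+2)/2$ and $(\beta+2)/\beta$. The only cosmetic difference is that the paper first isolates the intermediate Caccioppoli estimate \eqref{eq:he} for $\int\eta^2\,\weight|\X u|^\beta|\XX u|^2\,dx$ (absorbing into that quantity, and using H\"older at the integral level rather than pointwise Young), whereas you absorb directly into the target $\int\eta^{\beta+2}\weight|Tu|^{\beta+2}\,dx$; the bookkeeping of the $K$-power works out identically either way.
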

\begin{proof}
First, we show the following claim. For 
all non-negative $\eta\in C^\infty_0(\Omega)$, we show that 
\begin{equation}\label{eq:he}
\int_\Omega \eta^2\, \weight |\X u|^\beta |\X \X u|^2\, dx\le c(\beta+1)^{10}K\int_{\supp(\eta)} \weight |\X u|^{\beta+2}\, dx,
\end{equation}
where $K=\|  \X\eta\| _{L^\infty}^2+\| \eta
T\eta\| _{L^\infty}$ and $c= c(n,g_0,L)>0$.  
Then, \eqref{eq:Tint diff} follows easily from 
Corollary \ref{cor1}, the estimate \eqref{eq:he} and the fact that
$|Tu|\leq 2|\X\X u|$. Thus, we are only left with the 
proof of the claimed estimate \eqref{eq:he}. 

To prove \eqref{eq:he}, notice that 
by Lemma \ref{caccioppoli:horizontal:sigma}, we only need to estimate
the integral $$\int_\Omega \eta^2\, \weight |\X u|^\beta | Tu|^2\, dx.$$ 
From H\"older's inequality, we have 
\begin{equation*}
\begin{aligned}
\int_\Omega\eta^2\weight |\X u|^\beta| Tu|^2\, dx
 \le \Big(\int_\Omega \eta^{\beta+2}\weight 
|Tu|^{\beta+2}\, dx\Big)^{\frac{2}{\beta+2}}\Big(\int_{\supp(\eta)}\weight |\X u|^{\beta+2}\, dx\Big)^{\frac{\beta}{\beta+2}}.
\end{aligned}
\end{equation*}
Then, using $|Tu|\leq |\XX u|$ on the above, we obtain the following from 
Lemma \ref{caccioppoli:horizontal:sigma}, 
Corollary \ref{cor1} and Young's inequality, 
\[
\int_\Omega \eta^2\, \weight |\Xu|^\beta |\XX u|^2\, dx
\le c(\beta+1)^{\frac{4(\beta+2)}{\beta}+2} K\int_{\supp(\eta)} \weight |\X u|^{\beta+2}\, dx,
\]
which proves the claim \eqref{eq:he} and hence completes the proof. 
\end{proof}



\bibliographystyle{plain}
\bibliography{LipH}

\end{document}